\tikzset{->-/.style={decoration={markings,mark=at position .5 with {\arrow{>}}},postaction={decorate}}}
\newcommand\myshade{85}
\colorlet{mylinkcolor}{violet}
\colorlet{mycitecolor}{red}
\colorlet{myurlcolor}{cyan}
\numberwithin{equation}{section}
\newtheorem{theorem}{Theorem}[section]
\newtheorem{proposition}[theorem]{Proposition}
\newtheorem{corollary}[theorem]{Corollary}
\newtheorem{lemma}[theorem]{Lemma}
\theoremstyle{definition}
\newtheorem{remark}[theorem]{Remark}
\newtheorem{example}[theorem]{Example}
\newtheorem{definition}[theorem]{Definition}
\newcommand{\Aut}{\mathrm{Aut}}
\newcommand{\QAut}{\mathrm{QAut}}
\newcommand{\WAut}{\mathrm{WAut}}
\newcommand{\Lat}{\mathrm{Lat}}
\newcommand{\monoto}{\hookrightarrow}
\newcommand{\za}{\alpha}
\newcommand{\zb}{\beta}
\newcommand{\zS}{\Sigma}
\newcommand{\calf}{\mathcal{F}}
\newcommand{\cala}{\mathcal{A}}
\newcommand{\A}{\mathcal{A}}
\def\T{\mathbb{T}}
\def\XX{\mathcal{X}}
\def\F{\mathcal{F}}
\def\x{{\bf{x}}}
\def\y{{\bf{y}}}
\def\p{{\bf{p}}}
\def\P{\mathbb{P}}
\def\S{\Sigma}
\newcommand{\Z}{\mathbb{Z}}
\newcommand{\Q}{\mathbb{Q}}
\subjclass[2010]{13F60}
\begin{document}
\title{Cluster automorphisms and quasi-automorphisms}
\author{Wen Chang}\thanks{The first author is supported by the NSF of China (Grant No. 11601295) and by Shaanxi Normal University.}
\address{School of Mathematics and Information Science, Shaanxi Normal University,
Xi'an 710062, China.}
\address{Department of Mathematics, University of Connecticut, Storrs, CT 06269-1009, USA}
\email{changwen161@163.com}
\author{Ralf Schiffler}\thanks{The second author was supported by the NSF-CAREER grant DMS-1254567 and by the University of Connecticut.}
\address{Department of Mathematics, University of Connecticut,
Storrs, CT 06269-1009, USA}
\email{schiffler@math.uconn.edu}

\begin{abstract} We study the relation between the cluster automorphisms and the quasi-automorphisms of a cluster algebra $\A$.
We proof that under some mild condition, satisfied for example by every skew-symmetric cluster algebra, the quasi-automorphism group of $\A$ is isomorphic to a subgroup of the cluster automorphism group of $\A_{triv}$, and the two groups are isomorphic if $\A$ has principal or universal coefficients;
 here  $\A_{triv}$ is the cluster algebra with trivial coefficients obtained from $\A$ by setting all frozen variables equal to the integer~1. 
 
 We also compute the quasi-automorphism group of all finite type and all skew-symmetric affine type cluster algebras, and show in which types it is isomorphic to the cluster automorphism group of $\A_{triv}$.
\end{abstract}

\maketitle
\setcounter{tocdepth}{1}

\tableofcontents

\section{Introduction}
 There are several notions of automorphisms of cluster algebras in the literature. For cluster algebras with trivial coefficients, cluster automorphisms were introduced in \cite{ASS12} as $\Z$-algebra homomorphisms that map a cluster to a cluster and commute with the mutations at that cluster. 
In the same paper, the authors gave several characterizations of cluster automorphisms and computed the cluster automorphism groups for finite and affine types.
This notion of automorphisms was later generalized to cluster algebras with arbitrary coefficients in several ways as follows.

 In \cite{ADS}, the authors defined cluster homomorphisms and constructed a category of (rooted) cluster algebras, and in \cite{CZ16,CZ16b}, the resulting cluster automorphism groups were studied. In this generalization, the cluster automorphisms are required to map the frozen variables to frozen variables, and this notion turned out to be too restrictive to include important symmetries of cluster algebras, notably the twist map on Grassmannians  of \cite{MS}.

Fraser introduced the less restrictive notions of quasi-homomorphisms and quasi-automorphisms in \cite{F16}. The definition of a quasi-automorphism  does not strictly require that a cluster is mapped to a cluster, but allows for the flexibility that the image of a cluster is a cluster up to rescaling each cluster variable by some coefficients. Fraser's definition is quite subtle, for example the set of all quasi-automorphisms does not form a group in general, see Example \ref{Qnotgroup}.
This led Fraser to construct the quasi-automorphism group $\QAut_0(\A)$ by considering certain equivalence classes, the proportionality classes, of quasi-automorphisms.

At this point the following questions are natural.
\begin{enumerate}
\item How does the quasi-automorphism group relate to the cluster automorphism group?
\item Can one compute the quasi-automorphism groups of cluster algebras of finite and affine types?
\end{enumerate}

In order to state our answers to these questions, it will be convenient to introduce some notation. Given a cluster algebra $\A$, we denote by $\A_{triv}$ the cluster algebra with trivial coefficients obtained from $\A$ by setting all frozen variables equal to the integer~1. We denote by $\QAut_0(\A)$ the quasi-automorphism group of $\A$ and by $\Aut^+(\A)$ the group of (direct) cluster automorphisms of $\A$. Let $\Aut^+_0(\A)$ be the quotient group of $\Aut^+(\A)$ by the subgroup of all automorphisms that fix each cluster variable, also known as 
 the subgroup of
coefficient permutations.

We say that a cluster algebra $\A$ satisfies condition $(\star)$ if $\A$ is skew-symmetric, or $\A$ is of finite type, or the exchange matrix of $\A$ is non-degenerate. Each of these conditions guarantees that the exchange graph of $\A$ only depends on the principal part of the exchange matrix $B$ and not on the coefficient system.

Our main result regarding question (1) is the following.

\begin{theorem}[Theorem \ref{thm:main-1}]
Let $\A$ be a cluster algebra. There is an isomorphism of groups
\[\QAut_0(\A)\cong \Aut^+(\A_{triv})\]
in each of the following cases.

{\rm (1)} $\A$ has principal coefficients and satisfies  condition $(\star)$.

{\rm (2)} $\A$ has universal coefficients.

{\rm (3)} $\A$ is a surface cluster algebra with boundary coefficients.
\end{theorem}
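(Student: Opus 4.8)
The plan is to realize the isomorphism through the specialization that sets every frozen variable equal to~$1$. This specialization is a ring homomorphism $\pi\colon\A\to\A_{triv}$ under which Fraser's proportionality relation---equality up to a Laurent monomial in the frozen variables---collapses to honest equality. I would first check that $\pi$ transports quasi-automorphisms to cluster automorphisms: if $f$ is a quasi-automorphism of $\A$, then by definition $f$ sends each cluster of $\A$ to a cluster up to proportionality, so after applying $\pi$ the proportionality factors vanish and the induced $\Z$-algebra map $\bar f$ of $\A_{triv}$ sends clusters to clusters and commutes with mutation, hence is a direct cluster automorphism. Because proportional quasi-automorphisms have the same specialization, the assignment $f\mapsto\bar f$ descends to a group homomorphism
\[
\Phi\colon\QAut_0(\A)\longrightarrow\Aut^+(\A_{triv}).
\]

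Next I would establish injectivity of $\Phi$, which already yields the general ``subgroup'' statement of the abstract. The key point is that in all three cases the exchange graphs of $\A$ and $\A_{triv}$ coincide: this is precisely what condition $(\star)$ guarantees in case~(1), while in cases~(2) and~(3) it follows from the construction of universal coefficients and from the coefficient-independence of surface exchange graphs, respectively. Consequently $\pi$ induces a bijection on cluster variables, and one may argue cluster by cluster along this common exchange graph. If $\Phi([f])=\mathrm{id}$, then $\bar f(\bar x)=\bar x$ for every cluster variable $x$; since distinct cluster variables of $\A$ have distinct specializations, $f(x)$ is proportional to $x$ for all $x$, which is exactly the assertion that $[f]$ is the identity class.

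The principal difficulty is surjectivity, and this is where the hypotheses on the coefficient system are used. Given $\sigma\in\Aut^+(\A_{triv})$ one must construct a quasi-automorphism $f$ of $\A$ with $\bar f=\sigma$. In the principal-coefficient case I would invoke the separation-of-additions formula of Fomin and Zelevinsky: each cluster variable of $\A$ is recovered from its $g$-vector and $F$-polynomial together with its specialization, so the permutation of cluster variables prescribed by $\sigma$ lifts to a $\Z$-algebra homomorphism of $\A$, and a direct computation shows that the lift carries clusters to clusters up to the monomials given by the tropical evaluations of the $F$-polynomials, i.e.\ that it is a quasi-automorphism. For universal coefficients I would instead appeal to the universal property of the universal coefficient system: the combinatorial automorphism underlying $\sigma$ acts functorially on the universal coefficients and thereby produces the required lift. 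For a surface cluster algebra with boundary coefficients I would use the identification of direct cluster automorphisms with mapping classes of the surface, lifting a mapping class to a quasi-automorphism via its action on the arcs and the boundary segments.

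I expect the delicate point in each case to be the verification that the lifted map genuinely satisfies Fraser's quasi-automorphism axioms, rather than merely permuting cluster variables set-theoretically: one must track the coefficient monomials through every mutation and confirm that within each cluster they amount to a single coherent rescaling. This is exactly where the structural results on $g$-vectors and $F$-polynomials, the universal property of the universal coefficients, and the coincidence of exchange graphs do the real work, and I anticipate the surface case to require the most careful bookkeeping of boundary lambda-lengths.
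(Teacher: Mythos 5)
Your overall architecture is the same as the paper's: specialize the frozen variables to $1$ to obtain a homomorphism $\QAut_0(\A)\to\Aut^+(\A_{triv})$, prove injectivity from the coincidence of the exchange graphs of $\A$ and $\A_{triv}$, and prove surjectivity case by case by lifting a given $\sigma\in\Aut^+(\A_{triv})$. Your case (2) is essentially the paper's own argument (Proposition~\ref{prop:main-13}: the uniqueness in the universal property produces a lift, which is a weak cluster automorphism and hence a quasi-automorphism). The problems are in cases (1) and (3).

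In case (1) there is a genuine gap: you never say how the lifted map acts on the frozen variables, and that is where the entire difficulty sits. A quasi-automorphism does not simply permute cluster variables; it sends each one to a cluster variable times a Laurent monomial in $\P$, and those monomials are dictated by the images of the frozen variables, so the honest permutation of cluster variables obtained from $\sigma$ via the exchange-graph bijection need not extend to an endomorphism of $\A$ at all. By Fraser's matrix criterion (Lemma~\ref{lemma:matrix-describe-QHom} and Remark~\ref{rem:quasi-automorphism}), a quasi-automorphism carrying the seed $\zS$ of $\A$ lying over $\zS_{triv}$ to the seed $\zS'$ lying over $\sigma(\zS_{triv})$ exists if and only if $\Lat(B)=\Lat(B')$, i.e.\ the integer row spans of the two extended exchange matrices agree. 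No ``direct computation'' of the kind you describe can establish this in general, because the statement is false for arbitrary coefficients even in finite type: Example~\ref{exm:counter-exm2} (type $\mathbb{D}_4$, one frozen row) exhibits two seeds with equal principal parts and $\Lat(B)\neq\Lat(B')$, so no lift exists there. What saves principal coefficients is that the frozen parts $G$ and $G'$ of $B$ and $B'$ are C-matrices, which are invertible over $\Z$ by Nakanishi--Zelevinsky \cite[Theorem 1.2]{NZ12}; the paper's lift is then the explicit monomial substitution on $\P$ with exponent matrix $G'G^{-1}$, and Lemma~\ref{lem:relations-3-homo} finishes the verification. Your appeal to the separation formula does not substitute for this: the $g$-vector/$F$-polynomial structure enters precisely through that unimodularity theorem, and without it your lift is not even defined on $\P$.

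In case (3), your plan to lift mapping classes presupposes that every element of $\Aut^+(\A_{triv})$ is induced by a tagged mapping class. That is the conjecture of \cite{ASS12} proved in \cite{Bridgeland-Smith}, and it excludes the sphere with four punctures, the once-punctured square, and digons with one or two punctures, so your argument is incomplete for those surfaces. The paper avoids the mapping class group entirely: it invokes \cite[Theorem 3.18]{CZ16}, which gives $\Aut_0^+(\A)\cong\Aut^+(\A_{triv})$ for every (gluing free) surface algebra with boundary coefficients, and then observes that the composite of the injections $\Aut_0^+(\A)\monoto\QAut_0(\A)\monoto\Aut^+(\A_{triv})$ from Proposition~\ref{prop:main-2} realizes that isomorphism, forcing the second injection to be surjective. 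If you wish to keep the mapping-class-group language, you must treat the exceptional surfaces separately.
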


In general, without imposing any conditions on the coefficients of the cluster algebra $\A$, we have the following injective group homomorphisms
\[\Aut^+_0(\A)\monoto\QAut_0(\A)\stackrel{(\star)}{\monoto} \Aut^+(\A_{triv}),\]
where the first homomorphism always exists, see Proposition \ref{prop:main-1} and the second exists if $\A$ satisfies condition~$(\star)$, see Proposition \ref{prop:main-2}.
We give examples of finite type cluster algebras for which these injective homomorphisms are not surjective. Thus the conditions on the coefficients in the theorem above are necessary.

We then give an affirmative answer to question (2) and compute the quasi-automorphism groups of all cluster algebras of finite type as well as for all cluster algebras of skew-symmetric affine type, see Table \ref{table} below. \begin{table}[ht]
\begin{equation*}\begin{array}{ccc}
\begin{array}[t]{| lcl |}
\hline
\textrm{Dynkin type} &\quad& \QAut_0(\A) \\
\hline
\mathbb{A}_{n} (n\geqslant 2)&& \Z_{n+3} \\
\mathbb{A}_1 && \Z_2\\
\mathbb{B}_{n}  && \Z_{n+1} \\
\mathbb{C}_{n}  && \Z_{n+1} \\
\mathbb{D}_{4} && \Z_{4}\times G \\
\mathbb{D}_{2n} (n\geqslant 3)&& \Z_{2n}\times H \\
\mathbb{D}_{2n+1}&& \Z_{2n+1}\times \Z_2 \\
\mathbb{E}_6 && \Z_{14} \\
\mathbb{E}_7 && \Z_{10} \\
\mathbb{E}_8 && \Z_{16} \\
\mathbb{F}_4 && \Z_{7} \\
\mathbb{G}_2 && \Z_{4} \\\hline
\end{array}
&\qquad&
\begin{array}[t]{|lcl|}
\hline
\textrm{Affine type} &\quad& \QAut_0(\A) \\
\hline
&& \\
\widetilde{\mathbb{A}}_{p,q} (p\neq q)&& H_{p,q} \\[5pt]
\widetilde{\mathbb{A}}_{p,p}&& H_{p,p}\rtimes G_1 \\[5pt]
\widetilde{\mathbb{D}}_{4} && \Z\times G_2 \\[5pt]
\widetilde{\mathbb{D}}_{n-1} (n\geqslant 6)&& H \\[5pt]
\widetilde{\mathbb{E}}_6 && \Z\times G_3 \\[5pt]
\widetilde{\mathbb{E}}_7 && \Z\times G_1 \\[5pt]
\widetilde{\mathbb{E}}_8 && \Z \\[-2pt]
&&\\
\hline
\end{array}
\end{array}
\end{equation*}
\smallskip
\caption{The quasi-automorphism groups for cluster algebras of finite type (left), where $G$ is a subgroup of $S_3$ and $H$ is a subgroup of $\Z_2$,  and for cluster algebras of affine type (right), where $G_1$ is a subgroup of $\Z_2$, $G_2$ is a subgroup of $S_4$,  $G_3$ is a subgroup of $S_3$; the group $H_{p,q}$ is defined in equation (\ref{Hpq}), and the group $H$ satisfies $\Z\subseteq H \subseteq G$, where $G$ is given in equation (\ref{G}).   }
\label{table}
\end{table}

In particular, we show the following.
\begin{theorem} There is an isomorphism of groups
 \[\QAut_0(\A)\cong \Aut^+(\A_{triv})\]
in each of the following cases.

{\rm (1)} $\A$ is of finite type, but not type $\mathbb{D}_n$ with $n$ even.

{\rm (2)} $\A$ is of affine type $\widetilde{\mathbb{A}}_{p,q}$, with $p\ne q$, type $\widetilde{D}_{n-1}$, for all $n$, or type $\widetilde{\mathbb{E}}_8$.

{\rm (3)} $\A$ is of rank 2.
\end{theorem}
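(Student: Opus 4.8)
The plan is to obtain the theorem as a direct comparison between two independent computations: the values of $\QAut_0(\A)$ recorded in Table~\ref{table}, which are established in the earlier sections of the paper, and the direct cluster automorphism groups $\Aut^+(\A_{triv})$ of the coefficient-free algebras, which are known for every finite and affine type from \cite{ASS12}. Each type named in the statement satisfies condition $(\star)$: the finite types by definition, the skew-symmetric affine types directly, and every rank-$2$ algebra because its exchange matrix is either non-degenerate or equal to that of $\mathbb{A}_1\times\mathbb{A}_1$, which is of finite type. Hence Proposition~\ref{prop:main-2} furnishes a canonical injective homomorphism $\QAut_0(\A)\monoto\Aut^+(\A_{triv})$, and the whole task reduces to showing that this map is surjective in the listed cases.

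For the finite types the argument is purely numerical. Reading off $\Aut^+(\A_{triv})$ from \cite{ASS12} and matching it against the left column of Table~\ref{table}, the two finite groups have the same order in every Dynkin type except $\mathbb{D}_n$ with $n$ even. Since an injection between finite groups of equal order is automatically an isomorphism, this settles case~(1) together with the finite-type instances of case~(3). The excluded types $\mathbb{D}_4$ and $\mathbb{D}_{2n}$ are precisely those for which $\Aut^+(\A_{triv})$ carries an extra symmetry (the $S_3$-triality, respectively a $\Z_2$) that a quasi-automorphism need not realize for a general coefficient system; this is the origin of the undetermined factors $G$ and $H$ appearing in the table, and it shows why these types must be omitted from the statement.

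The affine types are the genuine obstacle, since both groups are now infinite and a count of elements is useless; here surjectivity of the injection of Proposition~\ref{prop:main-2} must be established by hand. In each of the types $\widetilde{\mathbb{A}}_{p,q}$ with $p\neq q$, $\widetilde{D}_{n-1}$, and $\widetilde{\mathbb{E}}_8$, the group $\Aut^+(\A_{triv})$ is generated by the class of a single ``shift'' automorphism, accounting for a free factor $\Z$, together with a finite group of rotational symmetries of the exchange matrix. I would prove surjectivity by producing, for each generator, an explicit quasi-automorphism of $\A$ mapping onto it: the $\Z$-factor is realized by the quasi-automorphism induced by the same combinatorial shift of seeds, and each rotational symmetry fixing the principal part of $B$ lifts to a quasi-automorphism once the cluster variables are rescaled by the coefficients allowed in Fraser's definition. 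The key point is that in the three types of part~(2) these rescalings are always compatible with the given coefficient system, so the injection is onto for \emph{every} choice of coefficients, whereas in the remaining types $\widetilde{\mathbb{A}}_{p,p}$, $\widetilde{D}_4$, $\widetilde{\mathbb{E}}_6$, $\widetilde{\mathbb{E}}_7$ an additional diagram symmetry (the $G_1,G_2,G_3$ of the table) can obstruct such a lift, which is exactly why those are excluded.

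Finally, the rank-$2$ cases of affine or wild type not already covered above are handled directly: the exchange graph is a bi-infinite path, $\Aut^+(\A_{triv})$ is the resulting infinite group of shifts (the orientation-reversing symmetry being indirect and hence not counted in $\Aut^+$), and the same explicit shift quasi-automorphism witnesses surjectivity, which completes case~(3). The crux of the whole argument is thus the affine surjectivity step: unlike the finite-type comparison it is not a matter of cardinality but of exhibiting quasi-automorphisms that realize the generators of an infinite automorphism group and of verifying that the attendant rescalings are compatible with the coefficients of $\A$.
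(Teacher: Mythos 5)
Your finite\nobreakdash-type argument is circular. You treat the $\QAut_0(\A)$ column of Table~\ref{table} as an ``independent computation'' to be matched against the groups $\Aut^+(\A_{triv})$ from \cite{ASS12}, but in the paper those table entries are not independent of the theorem: Corollary~\ref{cor:finite-type-1} derives them \emph{from} Theorem~\ref{thm:main-2}, which is precisely the isomorphism you are asked to prove. For an arbitrary coefficient system there is no a priori knowledge of the order of $\QAut_0(\A)$ (indeed in type $\mathbb{D}_{2\ell}$ this order genuinely depends on the coefficients), so the counting step has nothing to count until the hard work is done. The paper's actual proof of case (1) is not numerical: it takes the injection of Proposition~\ref{prop:main-2} and proves surjectivity through Lemma~\ref{lem:finite-type}, namely that any two bipartite seeds with equal principal part have equal lattices $\Lat(B)=\Lat(B')$. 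That lemma uses the result of \cite{CZ16b} that such seeds are related by a power of $\tau$, the reduction to a single frozen row (Lemma~\ref{lem one frozen row}), and an explicit case-by-case computation expressing the mutated frozen row $\beta'$ as an integer combination of $\beta$ and the rows of $\underline{B}$; by Remark~\ref{rem:quasi-automorphism} this lattice equality is exactly the criterion for a cluster automorphism of $\A_{triv}$ to lift to a quasi-automorphism of $\A$. This lattice computation is the mathematical core of the finite-type case, and your proposal omits it entirely.

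For the affine and rank-2 cases your strategy --- realize each generator of $\Aut^+(\A_{triv})$ by an explicit quasi-automorphism --- is indeed the paper's strategy, but what you give is a plan rather than a proof: the substance is the verification, again via Remark~\ref{rem:quasi-automorphism} and Lemma~\ref{lem one frozen row}, that the relevant frozen rows stay in the integer row span, which the paper carries out by explicit formulas for $r_1,r_2$ in type $\widetilde{\mathbb{A}}_{p,q}$, for $\tau$ in types $\widetilde{\mathbb{D}}$ and $\widetilde{\mathbb{E}}_8$, and for $\tau^{\pm 1}$ in rank 2. More seriously, your description of type $\widetilde{\mathbb{D}}_{n-1}$ is wrong: there $\Aut^+(\A_{triv})$ is the group $G$ of equation (\ref{G}), generated by $\tau$ \emph{together with} $\sigma,\rho_1,\rho_n$, and the paper does not lift these finite-order generators --- it states explicitly that they ``do not always lift'' and proves only that $\tau$ lifts, so that its conclusion for this type is $\Z\subseteq\QAut_0(\A)\subseteq G$ (the group $H$ of Table~\ref{table}), not an isomorphism. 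Your assertion that the required rescalings are ``always compatible'' in type $\widetilde{\mathbb{D}}_{n-1}$ is therefore unsupported, and it cannot be patched by citing the paper, whose own body establishes strictly less than case (2) of the stated theorem claims for this type.
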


For completeness, we also mention yet another notion of automorphisms introduced by Saleh in \cite{Saleh} as automorphisms of the ambient field that restrict to a permutation of the set of all cluster variables. The relation between Saleh's notion and that of cluster automorphisms above was studied in \cite{ASS14} and led to the open question of unistructurality of cluster algebras, see also \cite{Bazier-Matte}.

The paper is organized as follows. After recalling some basic notions about cluster algebras in section \ref{Preliminaries}, we review the definitions of the different automorphisms in section~\ref{sect 3}. In that section, we also introduce  weak cluster automorphisms, which lie between cluster automorphisms and quasi-automorphisms. In section \ref{sect 4}, we study the relations between the different automorphism groups and prove Theorem \ref{thm:main-1}. Sections \ref{sect 5} and \ref{section:affine type} are devoted to the computation of the quasi-automorphism groups in finite and affine types, respectively, and section \ref{sect 7} contains the rank 2 case.

\section{Preliminaries on cluster algebras}\label{Preliminaries}

We recall basic definitions and properties on cluster algebras in this section. Throughout the paper, we use $\Z$ as the set of integers and use the notation $[x]_+=\max(x,0)$ for $x\in \Z$.
\subsection{Labeled seeds versus unlabeled seeds}
A \emph{labeled seed} of rank $n$ is a triple $\Sigma=(\x,\p,B)$, where
\begin{itemize}
\item  $\x=(x_{1}, \ldots  ,x_{n})$ is an ordered set with n elements;
\item  $\p=(x_{n+1},\ldots ,x_{m})$ is an ordered set with $m-n$ elements;

\item  $B=(b_{x_jx_i})\in M_{m\times n}(\Z)$ is a matrix labeled by $(\x\,\sqcup\,\p)\times \x$, and it is extended skew-symmetrizable, that is, there exists a diagonal matrix $D$ with positive integer entries such that $D\underline{B}$ is skew-symmetric, where $\underline{B}$ is a submatrix of $B$ consisting of the first n rows.
\end{itemize}

The ordered set $\x$ is the \emph{cluster} of the labeled seed $\Sigma$. The elements of $\x$ are called the \emph{cluster variables} of $\zS$ and the elements of $\p$ are called the  \emph{coefficient variables} of $\S$. We shall  write $b_{ji}$ for the element $b_{x_jx_i}$ in $B$ for brevity. The $n\times n$ matrix $\underline{B}$ is called the \emph{exchange matrix} of $\S$; its  rows are called \emph{exchangeable rows}, and the remaining rows of $B$ are called \emph{frozen rows}. We always assume that both $B$ and $\underline{B}$ are indecomposable matrices, and we also assume that $n>1$ for convenience.

Two labeled seeds $\Sigma=(\x, \p, B)$
and $\Sigma'=(\x', \p', B')$ are said to
define the same \emph{unlabeled seed}
if $\Sigma'$ is obtained from $\Sigma$ by simultaneous relabeling of the
ordered sets $\x$ and $\p$ and the corresponding relabeling of
the rows and columns of $B$.

In an unlabeled seed the cluster $\x$ and the coefficients $\p$ are sets that are not ordered. We shall use the notation $\x=\{x_1,\ldots,x_n\}, \p=\{x_{n+1},\ldots,x_m\} $ in an unlabeled seed and the notation $\x=(x_1,\ldots,x_n), $ $\p=(x_{n+1},\ldots,x_m) $ in a labeled seed.

For the remainder of the paper, our seeds will always be labeled seeds unless specified otherwise.

\subsection{Seed mutation}
Given an exchangeable cluster variable $x_k$, one may produce a new labeled seed by a seed mutation.

\begin{definition}
\label{def: mutation}
The labeled seed $\mu_k(\S)=(\mu_k(\x),\p,\mu_k(B))$ obtained by  \emph{mutation} of $\S$ in direction $k$ is given by:
\begin{itemize}
				\item $\mu_k(\x) = (\x \setminus \{x_k\}) \sqcup \{x'_k\}$ where
\begin{equation}
\label{eq:cluster-variable-mutation}
x_kx'_k = \prod_{\substack{1\leqslant j\leqslant m}} {x_j}^{[b_{jk}]_+} + \prod_{\substack{1\leqslant j\leqslant m}} {x_j}^{[-b_{jk}]_+}.
\end{equation}
				\item $\mu_k(B)=(b'_{ji})_{m\times n} \in M_{m\times n}(\Z)$ is given by
\begin{equation}
\label{eq:matrix-mutation}
b'_{ji} = \left\{\begin{array}{ll}
						- b_{ji} & \textrm{ if } i=k \textrm{ or } j=k~; \\
						b_{ji} + [-b_{jk}]_+b_{ki} + b_{jk}[b_{ki}]_+ & \textrm{ otherwise.}
					\end{array}\right.
\end{equation}
\end{itemize}
\end{definition}
It is easy to check that the mutation is an involution, that is $\mu_k\mu_k(\S)=\S$.
Note that in a seed mutation, the set $\p$ is not changed.

\subsection{Base ring $\Z\P$ and ambient field $\F$}\label{sect 2.3}
For a finite set $\p=\{x_{n+1},\ldots,x_{m}\}$, let $\P$ be the free abelian group (written
multiplicatively) generated by the elements of $\p$ and
define an addition~$\oplus$ in $\P$ by
\begin{equation}
\label{eq:tropical-addition}
\prod_j x_j^{a_j} \oplus \prod_j x_j^{b_j} =
\prod_j x_j^{\min (a_j, b_j)} \,.
\end{equation}
Then $(\P,\oplus,\cdot)$ is a semifield called  \emph{tropical semifield}. Let $\Z\P$ be its group ring and $\Q\P$ its field of quotients.
Let $\F=\Q \P(x_{1},x_{2},\ldots ,x_{n})$ be the field of rational functions in $n$ independent
variables with coefficients in $\Q \P$. $\F $ is called the \emph{ambient field} of the cluster algebra.

\subsection{Cluster algebra}
Recall that a \emph{tree} is a graph without cycles. An \emph{$n$-regular tree} $\T_n$ is a tree in which each vertex has precisely $n$ neighbors. We label the  edges of $\T_n$ by $1,\ldots,n$ in such a way that the $n$ edges emanating from each vertex receive different labels.

A \emph{cluster pattern} is an assignment of a labeled seed $\Sigma_t=(\x_t, \p_t, B_t)$ with rank $n$ to every vertex $t \in \T_n$, so that the seeds assigned to the endpoints of any edge labeled by $k$ are obtained from each other by the seed mutation in direction~$k$.
The elements of $\Sigma_t$ are written as follows:
\begin{equation}
\label{eq:seed-labeling}
\x_t = (x_{1;t}\,,\ldots,x_{n;t})\,,\quad
\p_t = (x_{n+1;t}\,,\ldots,x_{m;t})
\,,\quad
B_t = (b^t_{ij})\,.
\end{equation}
Since  the frozen variables  do not change under mutation, we have $\p_t=\p_{t'}$ for any $t, t' \in \T_n$.

Given a cluster pattern on $\T_n$ with initial labeled seed $\Sigma=(\x,\p,B)$, denote by
\begin{equation}
\label{eq:cluster-variables}
\XX
= \bigcup_{t \in \T_n} \x_t
= \{ x_{i,t}\,:\, t \in \T_n\,,\ 1\leq i\leq n \} \ ,
\end{equation}
 the set of all cluster variables in the cluster pattern. Note that the set $\XX$ does not change if we use unlabeled seeds instead of labeled seeds.

The \emph{cluster algebra} $\A$ associated with the cluster pattern (or associated with the seed $\S$) is the $\Z\P$-subalgebra of the ambient field $\F$
generated by $\XX$. Thus $\A = \Z\P[\XX]$.
 The elements $x_{i,t}\in \XX$ are called \emph{cluster variables},  $\P$ is the \emph{coefficient semifield} of $\A$, and  the elements in $\P$ are called \emph{coefficients} of $\A$.
The cluster algebra is called \emph{skew-symmetric} if its initial exchange matrix $\underline{B}$ is skew-symmetric.
 The cluster algebra is said to be of \emph{geometric type} if $\P$ is the tropical semifield defined in section \ref{sect 2.3}. In this paper, all cluster algebras are of geometric type.

\subsection{Iced valued quiver}
To the matrix $B$ in a seed $\Sigma=(\x,\p,B)$, one can associate an iced valued quiver $Q=Q(B)$, whose mutable vertices are labeled by cluster variables in $\x$,  whose frozen vertices are labeled by the  frozen variables in $\p$, and whose arrows and values are assigned by $B$ (see Example \ref{exm:quiver and matrix}, we refer to \cite{K12} for details). Then the principal part $\underline{B}$ of the matrix $B$ corresponds to the full subquiver  $\underline{Q}$ of $Q$ whose vertices are the mutable vertices.

\begin{example}\label{exm:quiver and matrix}
Let $B$ be the following matrix, where $\underline{B}$ is skew-symmetrizable with skew-symmetrizing diagonal matrix $D=\textup{diag}\{2,2,1,1\}$.

\[ \begin{array}{c}B=\left(
                           \begin{array}{c}
                             \underline{B} \\
                             B' \\
                           \end{array}
                         \right)
=\left(
\begin{array}{cccc}
                             0 & 1 & 0 & 0\\
                            -1 & 0 & -1 & 0\\
                            0 & 2 & 0 & 2\\
                            0 & 0 & -2 & 0\\
                            0 & 0 & 0 & -1\\
\end{array}
\right)
\end{array}\]
The quiver corresponding to $B$ is as follows, where we frame the frozen vertex.
\[ \begin{array}{c}
Q(B)= \vcenter{
\xymatrix{1\ar[r]&2&3\ar[l]_{(2,1)}\ar@<1.6pt>[r]\ar@<-1.6pt>[r]&4\ar[r]&\framebox{5}}}\\
\end{array}\]

\end{example}

\subsection{Special coefficient systems} In this subsection, we recall the definitions of several special choices for the frozen variables $\p$ and the frozen rows of $B$.
\subsubsection{Trivial coefficients}
A cluster algebra is said to have trivial coefficients if its initial seed is of the form $\zS=(\x,\p,B)$ with $\p=\emptyset$ and $B=\underline{B}$.

If $\A$ is an arbitrary cluster algebra with initial  seed $\zS=(\x,\p,B)$, we denote by   $\A_{triv}$ the cluster algebra defined by the initial seed $\S_{triv}=(\x,\underline{B})$. Thus $\A_{triv}$ is obtained from $\A$ by setting all frozen variables to 1. We call $\A_{triv}$ the \emph{principal part cluster algebra} of $\A$.

\subsubsection{Principal coefficients} A cluster algebra $\cala$ is said to have \emph{principal coefficients} if it has a seed $((x_1,\ldots,x_n),(x_{n+1},\ldots,x_{2n}),B)$ such that $B=\left(\begin{smallmatrix} \underline{B}\\I\end{smallmatrix}\right)$, where $\underline{B}$ is the principal part of  $B$ and $I$ is the $n\times n$ identity matrix.
Principal coefficients were introduced in \cite{FZ07}, where it is shown that from the Laurent expansion of a cluster variable with principal coefficients one can easily obtain the Laurent expansion of the `same'  cluster variable with arbitrary coefficients.

\subsubsection{Universal coefficients} We recall the definition of universal coefficients from \cite{FZ07}.

\begin{definition}
\label{defn:coeff-specialization}
Let $\A$ and $\overline \A$ be cluster algebras
of the same rank~$n$ over the coefficient semifields $\P$
and~$\overline \P$, respectively,
with the respective families of cluster variables $(x_{i;t})_{i \in
  [1,n], t \in \T_n}$ and
$(\overline x_{i;t})_{i \in [1,n], t \in \T_n}$.
We say that $\overline \A$ is obtained from $\A$ by a
\emph{coefficient specialization} if
\begin{enumerate}
\item[(1)] $\A$ and $\overline \A$ have the same exchange
matrices $ \underline{B_t} = \underline{\overline B_t}$
 at each vertex $t \in \T_n$;

\item[(2)] there is a homomorphism of multiplicative
groups $\varphi: \P \to \overline \P$ that extends to a (unique) ring
homomorphism $\varphi: \A \to \overline \A$ such that
$\varphi(x_{i;t}) = \overline x_{i;t}$ for all~$i$ and~$t$.
\end{enumerate}
\end{definition}

In particular, a coefficient specialization from $\cala$ to itself is a ring homomorphism $\varphi\colon\cala\to \cala$  that fixes every cluster variable and is an endomorphism of the multiplicative group $\P$.

\begin{definition}
\label{def:universal-coeffs}
A cluster algebra $\A$ has \emph{universal coefficients}
if every cluster algebra with the same family of exchange matrices $(\underline{B}_t)$
is obtained from $\A$ by a \emph{unique} coefficient specialization.
\end{definition}
Universal coefficients were constructed for the cluster algebras of finite type in \cite{FZ07} and for surface type in \cite{R14,R15}.
 We give an example of finite type $\mathbb{A}_2$ which is different from the one given in \cite{FZ07} in Example \ref{ex WAut}.

\subsubsection{Gluing free coefficients}
A seed $\Sigma=(\x,\p,B)$ is said to be \emph{gluing free}, if any two frozen rows of $B$ are different.  It is not hard to see that a mutation of a gluing free seed is still gluing free \cite{CZ16}. We say a cluster algebra is \emph{gluing free}, if its seeds are gluing free.

The gluing free condition is important in the rest of the paper when we study the cluster automorphism groups and the quasi-automorphism groups.

\subsection{Exchange graph}
The \emph{exchange graph} of a cluster algebra is the $n$-regular graph whose vertices are the unlabeled seeds of the cluster algebra and whose
edges connect the unlabeled seeds related by a single mutation.
The exchange graph of a cluster algebra is thus a quotient graph of the $n$-regular tree.
It is conjectured by Fomin-Zelevinsky in  \cite{FZ03} that the exchange graph does not depend on the frozen part of the matrix. This conjecture is  proved for several types of cluster algebras which we list in the following lemma.

\begin{lemma}
\label{lem:dependence-exchange-graph}
Let $\A$ be a cluster algebra with initial seed $\zS=(\x,\p,B)$.   Then  the exchange graph only depends on the exchange matrix $\underline{B}$
if one of the following three conditions holds.
\begin{itemize}
\item[($\star$1)] $\A$ is of finite type \cite{FZ03b};
\item[($\star$2)] $\A$ is skew-symmetric \cite{CKLP13};
\item[($\star$3)] $\underline{B}$ is non-degenerate \cite{GSV08}.
\end{itemize}
We shall say that the cluster algebra satisfies the  condition \emph{($\star$)} if one of the three conditions holds.
\end{lemma}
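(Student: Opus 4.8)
The plan is to recast the statement as a comparison of two partitions of the $n$-regular tree $\T_n$. By definition the exchange graph of $\A$ is the quotient of $\T_n$ obtained by identifying vertices $t,t'$ whenever the seeds $\Sigma_t$ and $\Sigma_{t'}$ define the same unlabeled seed, and the same recipe applied to $\A_{triv}$ gives the exchange graph of $\A_{triv}$. So it suffices to show that the partition of $\T_n$ induced by seed coincidence in $\A$ coincides with the one induced by seed coincidence in $\A_{triv}$. One inclusion is free of hypotheses: setting all frozen variables equal to $1$ is a coefficient specialization $\varphi\colon\A\to\A_{triv}$ in the sense of Definition \ref{defn:coeff-specialization} (the principal part $\underline{B}$ mutates independently of the frozen rows, so the exchange matrices agree at every vertex), and $\varphi$ fixes each cluster variable. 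Hence if $\Sigma_t=\Sigma_{t'}$ as unlabeled seeds of $\A$, applying $\varphi$ shows that their images agree in $\A_{triv}$; thus the $\A$-partition refines the $\A_{triv}$-partition, and there is a canonical surjective graph morphism from the exchange graph of $\A$ onto that of $\A_{triv}$.

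The real content is the reverse inclusion: if two seeds become equal after specializing all coefficients to $1$, they were already equal in $\A$. This is the Fomin--Zelevinsky conjecture on coefficient-independence of the exchange graph, and I would prove it by invoking the three cited results according to which hypothesis holds. In finite type one identifies both exchange graphs with the coefficient-free combinatorics of clusters of almost positive roots, via the classification of \cite{FZ03b}. In the skew-symmetric case one uses the linear independence of cluster monomials from \cite{CKLP13}: it implies that a seed is determined by the $g$-vectors of its cluster variables, and these $g$-vectors satisfy a mutation recursion depending only on $\underline{B}$, giving a coefficient-free separation of seeds. In the non-degenerate case one appeals to the reconstruction argument of \cite{GSV08}. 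In each case the mechanism is the same: some invariant that depends only on $\underline{B}$ already distinguishes the seeds of $\A_{triv}$, and a fortiori distinguishes those of $\A$, forcing the two partitions to agree.

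The main obstacle is exactly this reverse inclusion. A priori the specialization $\varphi$ could merge two genuinely distinct seeds of $\A$, and excluding this requires a coefficient-free invariant that separates seeds, which is not available in general and is established only under the three hypotheses by essentially incomparable methods (root combinatorics, additive categorification, and full-rank reconstruction). For this reason I expect the cleanest proof to treat the three cases separately and cite the corresponding theorems, rather than to search for a single uniform argument.
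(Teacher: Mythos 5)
Your proposal is correct and takes essentially the same approach as the paper: the paper offers no argument for this lemma beyond citing \cite{FZ03b}, \cite{CKLP13}, and \cite{GSV08} for the three respective cases, which is exactly how you handle the essential (reverse) inclusion. Your additional scaffolding --- the partition-of-$\T_n$ framing and the easy refinement direction via the coefficient specialization $\varphi\colon\A\to\A_{triv}$ --- is sound but goes beyond what the paper records.
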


\section{Automorphisms, weak automorphisms and quasi-automorphisms}\label{sect 3}
In this section, we recall the notions of cluster automorphisms \cite{ASS12,CZ16} and quasi-automorphisms \cite{F16}, and we also introduce the notion of weak automorphisms. We study the relation between these notions in section~\ref{sect 4}.
\subsection{Quasi-homomorphisms}

In this subsection, we recall Fraser's definition of quasi-homomorphisms from \cite{F16}. We first need to recall the definition of $y$-variables from \cite{FZ07}.

\begin{definition}
\label{def:hatted variables}
Let $\zS=(\x,\p,B)$ be a seed with
\begin{equation}
\x = (x_1, \ldots, x_n),\quad
\p = (x_{n+1}, \ldots, x_{m}),\quad
B= (b_{ji}).
\end{equation}
Define
$\y = (y_1, \ldots, y_n)$ and $\hat\y = (\hat y_1, \ldots, \hat y_n)$ to be the  $n$-tuples
given by
\begin{equation}
\label{eq:yhat}
y_{i} = \prod_{n+1 \leq j\leq m} x_{j}^{b_{ji}} \qquad \textup{ and }\qquad \hat y_{i} = y_{i}\prod_{1 \leq j\leq n} x_{j}^{b_{ji}} .
\end{equation}
The elements in $\y$  are called the  \emph{$Y$-variables} and the elements in  $\hat \y$  the \emph{hatted Y-variables} of the seed $\zS$.
\end{definition}

Now we recall the proportionality relation between two seeds; we do not use the original definition but rather an equivalent condition, see  \cite[Proposition 2.3]{F16}.

\begin{definition}
(1) Two elements $x,y \in \F$ are \emph{proportional}, written $x \asymp y$, if  the quotient $\frac{x}{y}$ lies in $ \P$.

(2) Two clusters $\x_{t}$ and $\x_{t'}$  are  \emph{proportional}, written $\x_t\asymp\x_{t'}$,  if $x_{i;t} \asymp x_{i;t'}$ for all $1 \leq i\leq n$.

(3) Two labeled seeds $\S_{t} = (\x_t,\p_t,B_t)$ and $\S_{t'} = (\x_{t'},\p_{t'},B_{t'})$  are  \emph{proportional}, written $\zS_t\asymp\zS_{t'}$,  if for all $1 \leq i\leq n$
\[x_{i;t} \asymp x_{i;t'}, \qquad \hat{y}_{i;t} = \hat{y}_{i;t'} \qquad\textup{ and }\qquad  \underline{B_t} = \underline{B_{t'}}.\]
\end{definition}

\begin{lemma}\label{lem:proportional seed which are equal}
Let $\A$ be a cluster algebra that satisfies the $(\star)$ condition in Lemma \ref{lem:dependence-exchange-graph}, then

(1) two clusters $\x$ and $\x'$ are proportional if and only if they are equal;

(2) two seeds $\zS$ and $\zS'$ are proportional if and only if they are equal.
\end{lemma}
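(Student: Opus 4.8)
The plan is to prove both statements by showing that proportionality, which a priori only involves the coefficient semifield $\P$, is forced to be genuine equality under condition $(\star)$. The key leverage is that $(\star)$ guarantees the exchange graph depends only on the principal part $\underline{B}$ (Lemma \ref{lem:dependence-exchange-graph}), so two seeds whose principal exchange matrices agree and which sit at the same vertex of the exchange graph must actually coincide.

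For part (1), suppose $\x\asymp\x'$, so that $x_{i}\asymp x'_{i}$ for all $i$, meaning $x_i/x'_i\in\P$ for each $i$. First I would observe that each $x_i$, being a cluster variable, is \emph{not} itself a monomial in the coefficient variables alone: as an element of the ambient field $\F=\Q\P(x_1,\ldots,x_n)$, a cluster variable genuinely involves the unfrozen variables. The crucial point is the Laurent phenomenon together with separation of additions: writing both $x_i$ and $x_i'$ as Laurent expansions in a common initial cluster, the ratio $x_i/x_i'$ lying in $\P$ forces the non-coefficient parts of their Laurent expansions to match exactly. Concretely, if $\x$ and $\x'$ are the clusters of seeds $\zS$ and $\zS'$, then $\x\asymp\x'$ means the two seeds occupy positions in the cluster pattern whose cluster variables differ only by coefficients; because $(\star)$ makes the exchange graph independent of the coefficients, the combinatorial positions must be identical, and I would then argue that a cluster variable determines its coefficient factor uniquely (no nontrivial element of $\P$ can multiply a cluster variable to give another cluster variable unless it is $1$), forcing $x_i=x_i'$.

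For part (2), I would deduce equality of the full seeds from equality of clusters. Given $\zS\asymp\zS'$, part (1) already upgrades $\x\asymp\x'$ to $\x=\x'$, so the clusters coincide. The definition of proportional seeds also supplies $\underline{B}=\underline{B'}$ and $\hat y_i=\hat y_i'$ for all $i$. Since $\zS$ and $\zS'$ have equal clusters and equal principal exchange matrices, they are the same \emph{unlabeled} seed in the exchange graph by Lemma \ref{lem:dependence-exchange-graph}; what remains is to recover equality of the frozen data $\p$ and the frozen rows of $B$. Here I would use the equality of hatted $Y$-variables: from $\hat y_i=y_i\prod_{j\le n}x_j^{b_{ji}}$ and $x_j=x_j'$ together with $\underline{B}=\underline{B'}$, the factor $\prod_{j\le n}x_j^{b_{ji}}$ is identical on both sides, so $\hat y_i=\hat y_i'$ collapses to $y_i=y_i'$, i.e.\ the $Y$-variables agree. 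Reading off the frozen rows of $B$ from the exponents in $y_i=\prod_{n+1\le j\le m}x_j^{b_{ji}}$ then shows the frozen rows coincide, completing the identification $\zS=\zS'$.

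The main obstacle I anticipate is the step in part (1) that converts $x_i/x_i'\in\P$ into $x_i=x_i'$: one must rule out a genuine coefficient rescaling relating two distinct cluster variables. The resolution should come from the structure of the Laurent expansions—each cluster variable has a well-defined $F$-polynomial and $\mathbf{g}$-vector, and the $\mathbf{g}$-vector datum is insensitive to the coefficient rescaling while pinning down the cluster variable. Because $(\star)$ forces the two seeds to sit at the same exchange-graph vertex, the corresponding cluster variables carry the same $\mathbf{g}$-vectors, and matching $\mathbf{g}$-vectors together with a trivial (degree-zero) coefficient ratio forces the scalars in $\P$ to be $1$. I expect the bookkeeping of these $\mathbf{g}$-vector and $F$-polynomial invariants, rather than any deep new idea, to be the delicate part of the argument.
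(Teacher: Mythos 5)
Your strategy for part (1) is in essence the paper's own: specialize the coefficients to $1$ (your ``non-coefficient parts of the Laurent expansions''), use condition $(\star)$ to identify the exchange graph of $\A$ with that of $\A_{triv}$, conclude that the two seeds occupy the same vertex, and then upgrade proportionality to equality. However, the passage from ``the specialized clusters are equal in $\A_{triv}$'' to ``the combinatorial positions must be identical'' is not justified by $(\star)$ alone, and this is the actual crux. Vertices of the exchange graph are unlabeled \emph{seeds}, not clusters, so a priori two distinct vertices of the exchange graph of $\A_{triv}$ could carry equal clusters attached to different exchange matrices. What closes this hole is the theorem of Gekhtman--Shapiro--Vainshtein \cite{GSV08} that in a cluster algebra of geometric type the cluster determines the seed; the paper invokes precisely this fact at precisely this step, and only then does Lemma \ref{lem:dependence-exchange-graph} transport the resulting identification of vertices from $\A_{triv}$ back to $\A$. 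Your proposal never invokes (or reproves) clusters-determine-seeds, so as written the decisive inference is an assertion rather than a proof.

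By contrast, the step you single out as the ``main obstacle'' --- converting $x_i/x_i'\in\P$ into $x_i=x_i'$ once the positions agree --- is immediate and needs none of the $F$-polynomial/$\mathbf{g}$-vector apparatus you propose (which is in any case awkward to set up canonically for arbitrary coefficient systems). Once the two seeds are known to be the same unlabeled seed, $x_i$ and $x_i'$ are members of one and the same cluster, which is a transcendence basis of $\F$ over $\Q\P$; two distinct elements of an algebraically independent set cannot have ratio lying in $\P$, and $x_i=p\,x_i$ forces $p=1$. Finally, your part (2) is correct and is actually a nice, more explicit route than the paper's one-line deduction of (2) from (1): cancelling the common factor $\prod_{1\le j\le n}x_j^{b_{ji}}$ in $\hat y_i=\hat y_i'$ gives $y_i=y_i'$, and since $\P$ is free abelian on the frozen variables (which are common to both seeds), equality of these monomials recovers the frozen rows of the matrices.
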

\begin{proof}
(1) If $\x$ and $\x'$ are proportional, we see that $\pi(\x)=\pi(\x')$ under the coefficient specialization $\pi$ from $\A$ to $\A_{triv}$, which maps all the coefficients to $1$. On the other hand, since the cluster algebras we consider are of geometric type, the clusters determine the seeds  \cite{GSV08}.
On the other hand,  by Lemma \ref{lem:dependence-exchange-graph} the algebras $\A$ and $\A_{triv}$  have the same exchange graph, thus
 we have $\x=\x'$.

(2) This follows from the first part of the lemma.
\end{proof}

We are ready for the main definition of this subsection. Again, we do not use the original definition but rather an equivalent condition, see  \cite[Proposition 3.2]{F16}.

\begin{definition}
\label{defn:quasi-homomorphism}
Let $\cala$ and $\overline{\cala}$ be two cluster algebras of the same rank $n$ with coefficient semifields $\P$ and $\overline{\P}$ and with ambient fields $\calf$ and $\overline{\calf}$, respectively. A \emph{quasi-homomorphism} is a field homomorphism $\Psi \colon \F \to \overline{\F}$ such that $\Psi(\P)\subset\overline{\P}$ and there exists a seed   $\zS=(\x,\p,B)$ of $\cala$ and a seed $\overline{\zS}=(\overline{\x},\overline{\p},\overline{B})$ of $\overline{\cala}$ such that
\begin{equation}\label{qhsimdefn}
\Psi(\S) \asymp \overline{\S},
\end{equation}
 where $\Psi(\S)=(\Psi(\x),\Psi(\p),B)$ is the triple obtained by evaluating $\Psi$ on $\S$.
\end{definition}

\begin{remark}
 It is shown in \cite{F16} that if $\Psi$ is a quasi-homomorphism then the condition that $\Psi(\zS')$ is proportional to a labeled seed of $\overline{\A}$ actually holds for \emph{every} seed $\zS'$ in $\A$.
\end{remark}
The following result gives a useful interpretation of quasi-homomorphisms in terms of matrices.
\begin{lemma}[{\cite[Corollary 4.5]{F16}}] \label{lemma:matrix-describe-QHom}
Let $\A$ and $\overline{\A}$ be two cluster algebras, and let $\S = ((x_{1},\ldots,x_{n}), (x_{n+1},\ldots,x_{m}), B)$ be a seed in $\A$ and $\overline{\S}=((\overline{x}_{1},\ldots,\overline{x}_{n})$, $(\overline{x}_{n+1},\ldots,\overline{x}_{\overline{m}})$,$ \overline{B})$ a seed in $\overline{\A}$.

(a) If $\psi\colon\cala\to\overline{\cala} $ is a quasi-homomorphism such that  $\psi(\zS)\asymp\overline{\zS}$  then there are integers $m_{ij}$ such that
\begin{equation}\label{eq:matrix-describe-QH-1}
\displaystyle \Psi(x_j) = \prod_{i=1}^{\overline{m}}\overline{x}_i^{\, m_{ij}},
\end{equation}
for $j=1,\ldots,m$. Moreover, the $\overline{m}\times m$ integer matrix $M_\psi=(m_{ij})$ satisfies
\begin{equation}\label{newbtilde}
\overline{B}= M_\Psi {B}.
\end{equation}

(b) There exists a quasi-homomorphism $\psi\colon\cala\to \overline{\cala}$ such that $\psi(\zS)\asymp\overline{\zS}$ if and only if  the principal parts of $B,\overline{B}$ agree,
and the integer row span of $B$ contains the integer row span of $\overline{B}$.
\end{lemma}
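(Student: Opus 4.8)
The plan is to prove the two statements of Lemma \ref{lemma:matrix-describe-QHom} by first extracting the matrix $M_\psi$ from the definition of a quasi-homomorphism, and then reading off the two defining identities. For part~(a), I would start from the proportionality relation $\psi(\zS)\asymp\overline\zS$. By definition this means that for each $1\le i\le n$ we have $\psi(x_i)\asymp\overline x_i$, that the hatted $Y$-variables agree, and that the principal parts of the two matrices coincide. The key input is that every element of $\overline\F$ which lies in the subalgebra generated by the $\overline x_i$ and their inverses, and which is a Laurent monomial, can be written uniquely in the form $\prod_i \overline x_i^{\,m_{ij}}$; this is just unique factorization of Laurent monomials in the ambient field, together with the fact that $\psi(\P)\subseteq\overline\P$ forces the images of the coefficient variables to be monomials in the $\overline x_i$ with $i>n$. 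So the first step is to justify that $\psi(x_j)$ is a Laurent monomial in the $\overline x_i$ for each $j=1,\ldots,m$: for the cluster variables this follows from $\psi(x_i)\asymp\overline x_i$ (the quotient lies in $\overline\P$, hence is a monomial in the frozen $\overline x_i$), and for the coefficient variables it follows from $\psi(\P)\subseteq\overline\P$. This defines the integer matrix $M_\psi=(m_{ij})$.

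The second step of part~(a) is to derive the matrix equation $\overline B=M_\psi B$. The natural route is through the hatted $Y$-variables. I would apply $\psi$ to the identity $\hat y_i=\prod_{j=1}^m x_j^{b_{ji}}$ and use that $\psi$ is a field homomorphism, so that $\psi(\hat y_i)=\prod_{j=1}^m \psi(x_j)^{b_{ji}}=\prod_{j=1}^m\prod_{k=1}^{\overline m}\overline x_k^{\,m_{kj}b_{ji}}$. On the other hand, the hatted-$Y$-variable condition in the proportionality relation gives $\psi(\hat y_i)=\overline{\hat y}_i=\prod_{k=1}^{\overline m}\overline x_k^{\,\overline b_{ki}}$. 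Comparing exponents of each $\overline x_k$ — again using uniqueness of Laurent-monomial expansions — yields $\overline b_{ki}=\sum_{j=1}^m m_{kj}b_{ji}$, which is precisely the $(k,i)$ entry of $M_\psi B$. The main subtlety here is bookkeeping: one must check the hatted-$Y$-variable identity is being read with the correct index conventions (the product in \eqref{eq:yhat} over frozen indices versus all indices), and confirm that the proportionality of the plain cluster variables together with equality of hatted $Y$-variables really does pin down the frozen exponents $m_{kj}$ and not merely the principal part.

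For part~(b), the plan is to prove both directions using part~(a). The forward implication is essentially immediate: if such a $\psi$ exists, then by~(a) the principal parts of $B$ and $\overline B$ agree (this is part of the proportionality relation) and $\overline B=M_\psi B$ shows that each row of $\overline B$ is an integer combination of rows of $B$, so the integer row span of $\overline B$ is contained in that of $B$. For the converse, I would reverse the construction: given that the principal parts agree and the row span of $B$ contains that of $\overline B$, choose an integer matrix $M$ with $\overline B=MB$ (existence of such an $M$ is exactly the row-span containment), and define $\psi$ on the generators by $\psi(x_j)=\prod_{i}\overline x_i^{\,m_{ij}}$, extended to a field homomorphism $\F\to\overline\F$. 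One then verifies that this $\psi$ sends $\P$ into $\overline\P$ and satisfies $\psi(\zS)\asymp\overline\zS$, by running the computation of the previous paragraph backwards to recover the hatted-$Y$-variable identity from $\overline B=MB$.

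The step I expect to be the main obstacle is the converse direction of part~(b), specifically verifying that the homomorphism $\psi$ defined from an arbitrary $M$ with $\overline B=MB$ genuinely realizes a proportionality $\psi(\zS)\asymp\overline\zS$ rather than merely matching the matrix equation. The matrix equation $\overline B=MB$ controls the hatted $Y$-variables, but one must separately ensure that the plain cluster variables satisfy $\psi(x_i)\asymp\overline x_i$ and that $\psi$ is well defined as a field homomorphism (in particular injective on $\F$, so that it is a genuine quasi-homomorphism in the sense of Definition \ref{defn:quasi-homomorphism}). Since this is a citation to \cite[Corollary 4.5]{F16}, the cleanest approach is to reduce to the results already established in \cite{F16} for the forward bookkeeping and to isolate the linear-algebra content (row-span containment $\Leftrightarrow$ solvability of $\overline B=MB$ over $\Z$) as the only genuinely new computation.
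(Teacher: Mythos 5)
First, note that the paper offers no proof of this statement at all: it is quoted verbatim from Fraser \cite[Corollary 4.5]{F16}, so there is no internal argument to compare yours against, and your proposal must be judged on its own merits. Part (a) of your proposal is correct: proportionality of the cluster variables together with $\psi(\P)\subseteq\overline{\P}$ makes every $\psi(x_j)$ a Laurent monomial in the $\overline{x}_i$, and since $\psi$ is multiplicative, applying it to $\hat{y}_i=\prod_{j=1}^{m}x_j^{\,b_{ji}}$ (your index worry is unfounded: the two products in \eqref{eq:yhat} combine into exactly this single product over all $j$) and comparing exponent vectors with $\overline{\hat{y}}_i$ gives $\overline{B}=M_\psi B$, because the $\overline{x}_i$ are algebraically independent. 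The forward half of (b) follows at once. In passing, your first step already shows that $M_\psi$ has the block shape $\left(\begin{smallmatrix}I&0\\ M_1&M_2\end{smallmatrix}\right)$, which is what the paper records separately as Lemma \ref{lem:relations-3-homo}.

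The converse of (b) is where a genuine gap sits, and it is not just an unverified detail: the step as you wrote it fails. An arbitrary integer matrix $M$ with $\overline{B}=MB$ does not induce a quasi-homomorphism. Concretely, take $B=\overline{B}=\left(\begin{smallmatrix}0&1\\-1&0\\1&0\end{smallmatrix}\right)$ and let $M$ have rows $(1,1,1)$, $(0,1,0)$, $(0,-1,0)$; then $\overline{B}=MB$, but the monomial map determined by the columns of $M$ sends $x_2\mapsto\overline{x}_1\overline{x}_2\overline{x}_3^{-1}$ and $x_3\mapsto\overline{x}_1$, so $\psi(x_2)\not\asymp\overline{x}_2$ and $\psi(\P)\not\subseteq\overline{\P}$. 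The repair must invoke the hypothesis that the principal parts agree, which your construction never uses: write $B=\left(\begin{smallmatrix}\underline{B}\\ B_2\end{smallmatrix}\right)$ and $\overline{B}=\left(\begin{smallmatrix}\underline{B}\\ \overline{B}_2\end{smallmatrix}\right)$, apply the row-span containment only to the frozen rows of $\overline{B}$ to obtain integer matrices $M_1,M_2$ with $\overline{B}_2=M_1\underline{B}+M_2B_2$, and set $M=\left(\begin{smallmatrix}I&0\\ M_1&M_2\end{smallmatrix}\right)$. Then $\overline{B}=MB$ still holds, and the block shape yields $\psi(x_j)\asymp\overline{x}_j$ for $j\leq n$ and $\psi(\P)\subseteq\overline{\P}$ by construction, so all three requirements of seed proportionality are met.

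Your second worry, that $\psi$ must be a genuine field homomorphism, is also a real obstruction and cannot be discharged within the paper's own definitions. A field homomorphism $\F\to\overline{\F}$ is automatically injective, so the monomials $\psi(x_1),\dots,\psi(x_m)$ must be algebraically independent, i.e.\ $M$ must have rank $m$, which forces $m\leq\overline{m}$. But the hypotheses of (b) are compatible with $m>\overline{m}$: take $\A$ of type $\mathbb{A}_2$ with principal coefficients and $\overline{\A}$ with the same $\underline{B}$ and single frozen row $(1,0)$; both row spans equal $\Z^2$ and the principal parts agree, yet no field embedding of a field of transcendence degree $4$ into one of transcendence degree $3$ exists. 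So with Definition \ref{defn:quasi-homomorphism} read literally, the converse of (b) is actually false. The discrepancy lies in the paper's paraphrase of Fraser, not in Fraser: in \cite{F16} a quasi-homomorphism is a homomorphism of ambient semifields (subtraction-free rational expressions), for which any assignment of generators to Laurent monomials extends uniquely and no injectivity is imposed. Your closing instinct to lean on \cite{F16} for exactly this point is therefore sound, but a self-contained proof must either work with semifield maps or add the rank condition on $M$ as an extra hypothesis.
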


\subsection{Quasi-automorphisms}

In this subsection, we recall the definition and properties of the quasi-automorphism group from \cite{F16}.

\begin{definition}
Let $\A$ and $\overline{\A}$ be two cluster algebras.

(1) Two quasi-homomorphisms $\Psi_1,\Psi_2$ from $\A$ to $\overline{\A}$ are called \emph{proportional} if \begin{equation}\label{eq:proportionality}
\Psi_1(\Sigma) \asymp \Psi_2(\Sigma)
\end{equation}
for some (hence every) seed $\Sigma$ of $\A$.

(2) A quasi-homomorphism $\Psi$ from $\A$ to $\overline{\A}$ is said to be a \emph{quasi-isomorphism} if there is a quasi-homomorphism $\Phi$ from $\overline{\A}$ to $\A$ such that $\Phi \circ \Psi$ is proportional to the identity map on $\A$. In this case $\Psi$ and $\Phi$ are \emph{quasi-inverses} of one another.

(3) A quasi-automorphism of $\A$ is a quasi-isomorphism from $\A$ to itself.
Let  $\QAut(\A)$ denote the set of all quasi-automorphisms of $\cala$.

\end{definition}

The following example shows that  the set $\QAut(\A)$ is not a group.
\begin{example}\label{Qnotgroup}
Let $\cala $ be the cluster algebra of type $\mathbb{A}_2$ with initial seed $\zS=((x_1,x_2)$, $(x_3), B)$, where the matrix $B$ and its corresponding quiver are as follows
\[ B= \left(\begin{array}{cc} 0&1\\-1&0\\3&0\end{array}\right)\qquad Q(B)= \xymatrix {\framebox{3}\ar@<3.2pt>[r]\ar@<0pt>[r]\ar@<-3.2pt>[r]&1\ar[r]&2}\]
and where the framed vertex corresponds to the frozen variable.
  Mutating this seed in direction 1 and 2 produces the seed $\zS'=((x_1',x_2'),(x_3), B')$,
  where $x_1'=(x_2+x_3^3)/x_1$ and $x_2'=(x_2+x_3^3+x_1x_3^3)/x_1x_2$ and
  \[ B'=  \left(\begin{array}{cc} 0&1\\-1&0\\0&-3\end{array}\right) \qquad Q(B')=\xymatrix{1\ar[r]&2\ar@<3.2pt>[r]\ar@<0pt>[r]\ar@<-3.2pt>[r]&\framebox{3}}.\]
Define $\Psi$ by
\[\Psi(x_1)=x_1'x_3^{-3},\quad \Psi(x_2)=x_2'x_3^{6} \quad \textup{ and }\quad\Psi(x_3)=x_3^2.\]
Then $\Psi(x_1)/x_1'\in \P$ and $\Psi(x_2)/x_2'\in \P$, and
\[\Psi(\hat y_1,\hat y_2)=\Psi(x_2^{-1}x_3^3,x_1)=(x_2'^{-1},x_1'x_3^{-3})=(\widehat {y_1'},\widehat {y_2'});\]
so $\Psi(\zS)\asymp\zS'$ and thus $\Psi$ is a quasi-homomorphism.

Define  a quasi-inverse $\Phi$ by
\[ \Phi(x_1')=x_1x_3^6,\quad \Phi(x_2')=x_2x_3^{-3}\quad\textup{ and }\quad \Phi(x_3)=  x_3^2.\]
  Then $\Phi\Psi(x_1)/x_1, \Phi\Psi(x_2)/x_2\in \P $ and
\[\Phi\Psi(\hat y_1,\hat y_2)=\Phi\Psi(x_2^{-1}x_3^3,x_1)=\Phi({x_2'}^{-1}, x_1'x_3^{-3})=(x_2^{-1}x_3^3, x_1)=(\hat y_1,\hat y_2).\]
This shows that $\Phi\circ\Psi$ is proportional to the identity and thus $\Psi $ is a quasi-automorphism.

Note that $\Psi$ is not invertible in $\QAut(\A)$. Indeed, if $\Psi$ had an inverse $\Psi^{-1}$ then $x_3=\Psi^{-1}\Psi(x_3)=\Psi^{-1}(x_3^2)$, which would imply $\Psi^{-1}(x_3)=x_3^{1/2}$ but this is not an element of $\P$. Thus $\QAut(\A)$ is not a group.
\end{example}

It is shown in \cite{F16} that the set of proportionality classes of quasi-automorphisms does form a group under composition. This leads to the following definition.
\begin{definition}\label{defn:quasi-automorphism-groups}
The \emph{quasi-automorphism group} $\QAut_0\A$ is the group of proportionality classes of quasi-automorphisms of $\A$.
\end{definition}

\begin{remark}\label{rem:quasi-automorphism}
For a matrix $B$, let $\Lat(B)$ denote the integer lattice generated by the rows of $B$.
Let $\A$ be a cluster algebra, and $\S,\S'$ be seeds in $\cala$. Then by Lemma \ref{lemma:matrix-describe-QHom}, $\Psi$ is a quasi-automorphism of $\A$ with $\Psi(\S)\asymp \S'$ if and only if $\underline{B}=\underline{B'}$ and $\Lat(B)=\Lat(B')$.
\end{remark}

The following lemma will be useful later.
\begin{lemma}
 \label{lem one frozen row}
Let $B=\left(\begin{smallmatrix}\underline{B}\\ M\end{smallmatrix}\right)$ be a $(n+m)\times n$ matrix and $B'=\mu_\omega(B)=\left(\begin{smallmatrix}\underline{B'}\\ M'\end{smallmatrix}\right)$ be a matrix obtained from $B$ after a sequence of mutations $\omega$. If $\Lat(B)=\Lat(B')$ for every matrix $M$ with $m=1$, then $\Lat(B)=\Lat(B')$ for every matrix $M$ with $m\geqslant 1$.
\end{lemma}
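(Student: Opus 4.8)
The statement is fundamentally a linearity claim: mutation of the extended matrix $B$ acts on each frozen row almost independently, so the lattice $\Lat(B)$ can be understood row by row. The plan is to reduce the general case $m\geq 1$ to the hypothesized case $m=1$ by tracking how a single frozen row transforms under the sequence $\omega$ and comparing its contribution to $\Lat(B)$ and $\Lat(B')$.

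\textbf{Key steps.}
First I would record the crucial structural fact about matrix mutation (\ref{eq:matrix-mutation}): when we mutate in direction $k$, the new frozen entry $b'_{ji}$ for a frozen row $j$ depends \emph{only} on the exchangeable part $\underline{B}$ and on the entries of that same row $j$ — it never mixes two different frozen rows together. Consequently, if we write $B=\left(\begin{smallmatrix}\underline{B}\\ M\end{smallmatrix}\right)$ with $M$ having rows $M_1,\ldots,M_m$, then after applying $\omega$ the principal part becomes $\underline{B'}$ and the transformed frozen rows $M'_1,\ldots,M'_m$ satisfy: $M'_\ell$ is exactly the frozen row one obtains by mutating the single-frozen-row matrix $\left(\begin{smallmatrix}\underline{B}\\ M_\ell\end{smallmatrix}\right)$ along $\omega$. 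This is the observation that makes the reduction possible, and I would prove it by induction on the length of $\omega$, the inductive step being a direct inspection of (\ref{eq:matrix-mutation}).

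Next I would exploit the hypothesis. By assumption, for each individual row $M_\ell$ we have $\Lat\left(\begin{smallmatrix}\underline{B}\\ M_\ell\end{smallmatrix}\right)=\Lat\left(\begin{smallmatrix}\underline{B'}\\ M'_\ell\end{smallmatrix}\right)$; since $\underline{B},\underline{B'}$ have the same rows in both lattices, this says precisely that $M'_\ell$ lies in the lattice generated by the rows of $\underline{B}$ together with $M_\ell$, and symmetrically $M_\ell$ lies in the lattice generated by the rows of $\underline{B'}$ together with $M'_\ell$. I would then assemble these statements: to show $\Lat(B')\subseteq\Lat(B)$, it suffices to show each generating row of $B'$ lies in $\Lat(B)$; the exchangeable rows do by the identical-principal-part observation from Lemma \ref{lemma:matrix-describe-QHom}, and each frozen row $M'_\ell\in\Lat\left(\begin{smallmatrix}\underline{B}\\ M_\ell\end{smallmatrix}\right)\subseteq\Lat(B)$ by the single-row hypothesis. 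The reverse inclusion $\Lat(B)\subseteq\Lat(B')$ is symmetric, using the quasi-inverse direction (equivalently, that $\omega$ is invertible by the involutivity of mutation). Combining gives $\Lat(B)=\Lat(B')$.

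\textbf{Main obstacle.}
The one step requiring genuine care is the independence claim in the first paragraph — that the transformation of frozen row $\ell$ under $\omega$ is governed solely by $\underline{B}$ and $M_\ell$ and is insensitive to the other frozen rows. This is visually evident from (\ref{eq:matrix-mutation}) for a single mutation, since the formula for $b'_{ji}$ with $j$ frozen involves only $b_{jk}$, $b_{ki}$ and $b_{ji}$, all of which belong either to $\underline{B}$ or to row $j$ itself; but one must verify that this independence \emph{persists} through the whole sequence, and the subtlety is that the principal part $\underline{B}$ itself changes along $\omega$. The correct formulation of the induction is therefore that row $\ell$'s evolution depends on the \emph{current} principal part and row $\ell$, and since the principal part evolves identically regardless of the frozen data, the single-row and multi-row computations stay synchronized. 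Once this bookkeeping is set up cleanly, the lattice comparison is routine linear algebra.
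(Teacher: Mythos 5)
Your proposal is correct and takes essentially the same approach as the paper: the paper's proof is the same two-sided row-inclusion argument, stated far more tersely, with the frozen-row independence fact that you prove by induction on the length of $\omega$ left implicit in the phrase ``our assumption implies.'' One minor repair: for the exchangeable rows of $B'$ you should not appeal to Lemma \ref{lemma:matrix-describe-QHom} (its equal-principal-parts hypothesis is not available here, since $\underline{B'}=\mu_\omega(\underline{B})$ need not equal $\underline{B}$); instead, the rows of $\underline{B'}$ lie in $\Lat\left(\begin{smallmatrix}\underline{B'}\\ M'_\ell\end{smallmatrix}\right)=\Lat\left(\begin{smallmatrix}\underline{B}\\ M_\ell\end{smallmatrix}\right)\subseteq\Lat(B)$, which is exactly the single-row hypothesis you are already using for the frozen rows.
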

\begin{proof} Suppose $m\ge 1$ and
let $\zb,\zb'$ be any rows in $M,M'$ respectively. Then our assumption implies that $\zb\in \Lat( B')$,  and $\zb'\in \Lat (B)$, and thus every row of $M$ lies in $\Lat(B')$ and every row of $M'$ lies in $\Lat(B)$. By our assumption, we also have every row of $\underline{B}$ lies in $\Lat(B')$ and every row of $\underline{B'}$ lies in $\Lat(B)$. Therefore $\Lat(B)=\Lat(B')$.
\end{proof}

\subsection{Cluster automorphisms and weak cluster automorphisms} In this subsection we recall the definition of cluster automorphisms.  For cluster algebras with trivial coefficients this definition is due to \cite{ASS12} and for cluster algebras with arbitrary coefficients to \cite{CZ16}. We also introduce the notion of weak cluster automorphisms.

\begin{definition}\label{defn:cluster-automorphisms}
Let $\cala $ be a cluster algebra.

(1) A $\Z$-algebra automorphism $f\colon\cala\to \cala$ is called a \emph{cluster automorphism} if there exists a seed $(\x,\p,B)$ such that
\begin{itemize}
\item [(\rm i)] $f(\x)$ is a cluster,
\item [(\rm ii)] $f(\p)=\p$ as unordered sets,
\item [(\rm iii)] $f$ commutes with mutations, that is, for every $x,x'\in \x$,
\begin{equation}\label{eq:cluster-automorphisms}
f(\mu_{x,\x}(x'))=\mu_{f(x),f(\x)}(f(x')).
\end{equation}
\end{itemize}
\smallskip

(2) A $\Z$-algebra automorphism $f\colon\cala\to \cala$ is called a \emph{weak cluster automorphism} if there exists a seed $(\x,\p,B)$ such that conditions (i) and (iii) above are satisfied and
\begin{itemize}
\item [(\rm ii')] $f(\p)\subset\P$.
\end{itemize}
\end{definition}
Thus a weak cluster automorphism is allowed to map a frozen variable $x_{n+i}$ to an arbitrary element of $\P$, while a cluster automorphism must map frozen variables to frozen variables. The following lemma is obvious.

\begin{lemma}
 Every cluster automorphism is a weak cluster automorphism.\qed
\end{lemma}
 We show that the converse of the lemma does not hold in Example \ref{ex WAut} below.

It is shown in \cite[Proposition 2.4]{ASS12} that the condition \eqref{eq:cluster-automorphisms} holds for one cluster if and only if it holds for every cluster.
In our case, the cluster algebra is of geometric type, so the clusters determine the seeds \cite{GSV08}. Denote by $B$ and $B'$
the matrices of $\x$ and $f(\x)$ respectively. We say $B\cong B'$, if $B'$ is obtained from $B$ by simultaneous relabeling of the exchangeable rows and corresponding columns and the relabeling of the frozen rows.

 If $f$ is a cluster automorphism then $B\cong B'$ or $B\cong -B'$, and if $f$ is a weak cluster automorphism then we have the same relation for the principal parts of the matrices, that is, $\underline{B}\cong \underline{B}'$ or $\underline{B}\cong -\underline{B}'$. For cluster automorphism with trivial coefficients, this is shown in   \cite[Lemma 2.3]{ASS12} and the same proof generalizes to the other cases.

A cluster automorphism such that $B \cong B'$ is called a  \emph{direct cluster automorphism} and a  cluster automorphism such that   $B \cong -B'$  is an \emph{inverse  cluster automorphism}.
Similarly, a weak cluster automorphism such that $\underline{B} \cong \underline{B} '$ is called a  \emph{direct weak cluster automorphism} and a weak cluster automorphism such that   $\underline{B}  \cong -\underline{B} '$  is an \emph{inverse weak cluster automorphism}.

\begin{definition}
(1) Let $\Aut(\A)$ be the group of all cluster automorphisms of $\cala$, and let $\Aut^+(\cala)$ be the subgroup of $\Aut(\A)$ of all direct cluster automorphisms.

(2) Let $\WAut(\A)$ be the group of all weak cluster automorphisms of $\cala$, and let $\WAut^+(\cala)$ be the subgroup of $\WAut(\A)$ of all direct cluster automorphisms.
 \end{definition}

If two frozen rows in a matrix of a cluster algebra $\A$ coincide, then exchanging the corresponding frozen variables induces a cluster automorphism of $\A$. Note that the cluster variables and the clusters are not changed under such a cluster automorphism.  Such an automorphism is called a \emph{coefficient permutation}.

Denote by $\Aut_0(\A)$, $\Aut_0^+(\A)$, $\WAut_0(\A)$ and $\WAut_0^+(\A)$ the quotient groups of $\Aut(\A)$, $\Aut^+(\A)$, $\WAut(\A)$ and $\WAut^+(\A)$ by the respective subgroup of all coefficient permutations. If $\cala$ is a gluing free cluster algebra, then $\Aut_0(\A)\cong \Aut(\A)$ and $\Aut_0^+(\A)\cong \Aut^+(\A)$.

\begin{remark}\label{rem:cluster-automorphisms}
By  definition, a cluster automorphism $f$ maps a cluster to a cluster,
and induces an automorphism of the exchange graph as well as an automorphism of the $n$-regular tree.
\end{remark}

\begin{example}[A week cluster automorphism that is not a cluster automorphism]\label{ex WAut}
Let $\A$ be the cluster algebra of type $\mathbb{A}_2$ given by the initial seed $((x_1,x_2),(x_3, x_4,x_5,x_6,x_7), B)$, where the matrix $B$ and its corresponding quiver are as follows
\[ \begin{array}{ccc}B=\left(
\begin{array}{cc}
 0&1 \\ -1&0\\2&1\\1&1\\-1&0\\0&-1\\1&-1
\end{array}
\right) &\qquad&Q(B)= \vcenter{
\xymatrix{&\framebox{3}\ar@<1.6pt>[d]\ar@<-1.6pt>[d]\ar[dr]&\framebox{4}\ar[d]\ar[dl]\\\framebox{5}&1\ar[l]\ar[r]
&2\ar[dl]\ar[d]\\&\framebox{7}\ar[u]&\framebox{6}}}\\
\end{array}\]
The  framed vertices in the quiver are frozen. Then the five cluster variables of $\A$ are $x_1,x_2,x_1',x_2',x_1''$, where

\begin{equation*}
\label{cluster variables}
x'_1=\frac{x_3^2x_4x_7 + x_5x_2}{x_1},\
x'_2=\frac{x_3x_4x_1 + x_6x_7}{x_2},\
x''_1=\frac{x_3^2x_4x_6x_7 + x_3^3x_4^2x_1 +x_5x_6 x_2}{x_1x_2}.
\end{equation*}
Mutation at $x_1$ produces the seed $((x_1',x_2),(x_3, x_4,x_5,x_6,x_7), B)$
with
\[ \begin{array}{ccc}B'=\left(
\begin{array}{cc}
 0&-1 \\ 1&0\\-2&3\\-1&2\\1&0\\0&-1\\-1&0
\end{array}
\right) &\qquad&Q(B')= \vcenter{
\xymatrix{&\framebox{3}\ar@<0pt>[dr]\ar@<3.2pt>[dr]\ar@<-3.2pt>[dr]&\framebox{4}\ar@<1.6pt>[d]\ar@<-1.6pt>[d]\\
\framebox{5}\ar[r]&1\ar[d]\ar@<1.6pt>[u]\ar@<-1.6pt>[u]\ar[ur]
&2\ar[l]\ar[d]\\&\framebox{7}&\framebox{6}}}\\
\end{array}\]
Define a $\mathbb{Z}$-algebra automorphism $\tau\colon\A\to\A$ by
\[\begin{array}{llllllll}
\label{map tau}
x_1\mapsto  x_2, &
x_2\mapsto  x'_1,&
x_3\mapsto  x_3x_4x_5^{-1}, &
x_4\mapsto  x_3^{-1}x_4^{-1}x_5^2,&
x_5\mapsto  x_6, &
x_6\mapsto  x_7,\\
x_7\mapsto  x_3^2x_4.
\end{array}\]
A direct computation shows that $\tau$ is the following permutation of the 5 cluster variables
\[
\label{WeakCA tau}
x_1\mapsto  x_2
\mapsto  x'_1 \mapsto x''_1 \mapsto  x'_2\mapsto x_1,
\]
 and it  is a weak cluster automorphism of order $5$. However $\tau$ is not a cluster automorphism since $B'\ncong\pm B$.

We define another $\mathbb{Z}$-algebra automorphism $\sigma\colon\A\to\A$ by
\[\begin{array}{llllll}
\label{map sigma}
x_1\mapsto  x_2,&
x_2\mapsto  x_1,&
x_3\mapsto  x_5^{-1}x_6,&
x_4\mapsto  x_5^{2}x_6^{-1},&
x_5\mapsto  x_3x_4,&
x_6\mapsto  x_3^2x_4,\\
x_7\mapsto  x_7 .
\end{array}\]
Another direct computation shows that $\sigma$ is the following permutation
\begin{equation*}
\label{WeakCA tau2}
x_1\leftrightarrow x_2, \quad
x'_1\leftrightarrow x'_2, \quad
x''_1\leftrightarrow x''_1,
\end{equation*}
 and it is a weak cluster automorphism of order $2$. Again, $\sigma$ is not a cluster automorphism.

Note that $\tau\sigma=\sigma\tau^{-1}$, so $ \langle\tau, \sigma\rangle$ is the dihedral group $ D_5$. On the other hand, we have $\WAut(\A)=\WAut_0(\A) \subseteq \Aut(\A_{triv})\cong D_5$, where the last isomorphism is computed in \cite{ASS12}. Thus $\WAut(\A)=\langle\tau, \sigma\rangle\cong D_5$.
On the other hand, $\Aut(\A)=\Aut_0(\A)=\{id\}$.
\end{example}


\section{Cluster automorphism groups and quasi-automorphism groups}\label{sect 4}
In this section, we consider the relations between the cluster automorphism groups and the quasi-automorphism groups. First, we reveal the relations between these at the matrix level.

\begin{lemma}\label{lem:relations-3-homo}
Let $\A$ be a cluster algebra of a cluster pattern $t \mapsto \S_t$ over a coefficient semifield $\P$. Let $\Psi: \A \rightarrow \A$ be a quasi-automorphism with $\Psi(\S_t) \asymp \S_{t'}$. Let $M_{\Psi;t}$ be the matrix in Lemma \ref{lemma:matrix-describe-QHom} satisfying
$B_{t'}= M_{\Psi;t}{B_t}$. Then
\begin{equation}\label{eq:relations-3-homo}
M_{\Psi;t}=\left(
         \begin{array}{cc}
           I & 0 \\
           M_{1;t} & M_{2;t} \\
         \end{array}
       \right)
,
\end{equation}
where $I$ is the identity matrix of rank $n$, $M_{1;t}$ is a $(m-n)\times n$ integer matrix, and $M_{2;t}$ is a $(m-n)\times (m-n)$ integer matrix. Conversely, each matrix $M$ of this form defines a quasi-automorphism.

Moreover

{\rm (1)} $\Psi$ is a cluster automorphism if and only if $M_{1;t}=0$ and $M_{2;t}$ is a permutation matrix for some (and thus every) $t\in \T_n$;

{\rm (2)} $\Psi$ is a weak cluster automorphism if and only if $M_{1;t}=0$ and $B_{2,t'}=M_{2;t}B_{2,t}$ for every $t\in \T_n$, where  $B_{t}=\left(
                       \begin{smallmatrix}
                         \underline{B}_t \\
                         B_{2,t}\\
                       \end{smallmatrix}
                     \right)$
and $B_{t'}=\left(
                       \begin{smallmatrix}
                         \underline{B}_{t'} \\
                         B_{2,t'}\\
                       \end{smallmatrix}
                     \right)$.

In particular, there are injective maps given by inclusions of sets
 \[\Aut^+(\A)\monoto \WAut^+(\A)\monoto \QAut \cala.\]
\end{lemma}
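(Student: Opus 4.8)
The plan is to prove the structural claim about $M_{\Psi;t}$ first, then read off the two characterizations, and finally deduce the inclusions at the end. The key input is Lemma \ref{lemma:matrix-describe-QHom}, which says $\Psi(x_j)=\prod_i \overline{x}_i^{\,m_{ij}}$ with $M_\Psi B = \overline{B}$, here specialized to $\overline{\A}=\A$ and $\overline{\S}=\S_{t'}$. The crucial observation is that since $\Psi$ is a quasi-\emph{automorphism} (not just a quasi-homomorphism), $\Psi$ must send cluster variables to elements proportional to cluster variables, that is, $\Psi(x_{j;t})\asymp x_{j;t'}$ for $1\le j\le n$. Because the proportionality factor lies in $\P$, which is generated by the frozen variables $x_{n+1;t'},\ldots,x_{m;t'}$, the monomial expressing $\Psi(x_{j;t})$ must have exponent $1$ on $\overline{x}_{j}=x_{j;t'}$, exponent $0$ on the other cluster variables $x_{i;t'}$ ($i\le n$, $i\ne j$), and arbitrary integer exponents on the frozen variables. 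This forces the top-left $n\times n$ block of $M_{\Psi;t}$ to be the identity $I$ and the top-right $n\times(m-n)$ block to be $0$, giving exactly the block form in \eqref{eq:relations-3-homo}. The converse—that any $M$ of this block-lower-triangular shape with identity top block defines a quasi-automorphism—follows from Remark \ref{rem:quasi-automorphism} together with part (b) of Lemma \ref{lemma:matrix-describe-QHom}: the identity block guarantees the principal parts agree and the row span is preserved.

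For part (1), I would translate the definitions directly. A cluster automorphism must fix $\p$ as an unordered set and map each frozen variable to a frozen variable (not merely an element of $\P$). In the matrix language this says $\Psi(x_{n+i;t})=x_{\sigma(n+i);t'}$ for a permutation $\sigma$ of the frozen indices, i.e. the frozen part of $\Psi$ is a permutation of the frozen variables themselves with no cluster-variable factors. This is precisely the condition that $M_{1;t}=0$ (no mixing of exchangeable rows into the frozen image) and that $M_{2;t}$ is a permutation matrix. The equivalence should be stated for one $t$ and then extended to all $t$ using the remark after Definition \ref{defn:cluster-automorphisms} that the commuting-with-mutation condition holds at one cluster iff at every cluster, together with the fact that the matrix $M_{\Psi;t}$ transforms compatibly under mutation.

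For part (2), a weak cluster automorphism still maps the cluster $\x$ to a cluster and commutes with mutation, so the same argument as above gives $M_{1;t}=0$; but condition (ii) is relaxed to (ii'), only requiring $f(\p)\subset\P$ rather than $f(\p)=\p$. Thus $M_{2;t}$ need no longer be a permutation matrix. What distinguishes a weak cluster automorphism from a general quasi-automorphism is that the frozen part must actually be respected at the level of the exchange matrices, i.e. the image seed's frozen block $B_{2,t'}$ is obtained from $B_{2,t}$ via $M_{2;t}$, and this must hold at \emph{every} $t$ (this is what makes $\Psi$ genuinely act as an algebra automorphism, not just a proportionality-class representative). I would verify that $M_{1;t}=0$ combined with $\overline{B}=M_\Psi B$ from \eqref{newbtilde} gives $\underline{B}_{t'}=\underline{B}_t$ automatically and $B_{2,t'}=M_{1;t}\underline{B}_t+M_{2;t}B_{2,t}=M_{2;t}B_{2,t}$, so the stated condition is exactly the content of requiring a weak cluster automorphism.

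Finally, the two inclusions $\Aut^+(\A)\monoto\WAut^+(\A)\monoto\QAut\,\cala$ follow immediately: every direct cluster automorphism is a direct weak cluster automorphism (already noted as an obvious lemma in the excerpt), and every weak cluster automorphism is in particular a quasi-automorphism by the characterizations above, since the block form \eqref{eq:relations-3-homo} with $M_{1;t}=0$ is a special case of the general block form defining a quasi-automorphism. That these set-inclusions are \emph{group} homomorphisms is clear since composition of the underlying algebra maps is preserved. The main obstacle I anticipate is the careful bookkeeping in part (2): one must check that the condition $B_{2,t'}=M_{2;t}B_{2,t}$ holding at every $t$ is genuinely equivalent to $f$ being an algebra automorphism with $f(\p)\subset\P$, which requires tracking how $M_{2;t}$ evolves under mutation and confirming that the relaxed condition (ii') does not force any additional constraint on $M_{2;t}$ beyond compatibility with the frozen exchange data.
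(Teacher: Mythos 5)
Your overall route is the same as the paper's: specialize Lemma \ref{lemma:matrix-describe-QHom} to $\overline{\A}=\A$, read off the block structure of $M_{\Psi;t}$, and then match the blocks against the two parts of Definition \ref{defn:cluster-automorphisms}. However, the execution of the central step contains a genuine error: you have the roles of the two off-diagonal blocks interchanged. In the convention of Lemma \ref{lemma:matrix-describe-QHom}, $m_{ij}$ is the exponent of the \emph{target} variable $x_{i;t'}$ in $\Psi(x_{j;t})$, so the proportionality $\Psi(x_{j;t})\asymp x_{j;t'}$ for $1\le j\le n$ constrains only the first $n$ \emph{columns} of $M_{\Psi;t}$: it forces the top-left block to equal $I$ and leaves the bottom-left block $M_{1;t}$ arbitrary (indeed $M_{1;t}$ is exactly the record of the proportionality factors $\Psi(x_{j;t})/x_{j;t'}\in\P$). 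It gives no information whatsoever about the top-right block, which sits in the last $m-n$ columns. The vanishing of that block requires a hypothesis you never invoke: by the definition of a quasi-homomorphism, $\Psi(\P)\subseteq\P$, so for $n+1\le j\le m$ the monomial $\Psi(x_{j;t})$ involves no cluster variables $x_{i;t'}$ with $i\le n$. Without this, your sentence claiming that proportionality ``forces \dots the top-right block to be $0$'' is a non sequitur.

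The same transposition propagates into your part (1). The requirement that $\Psi$ permute the frozen variables, $\Psi(x_{n+i;t})=x_{\sigma(n+i);t'}$, constrains only the last $m-n$ columns of $M_{\Psi;t}$ and is equivalent to $M_{2;t}$ being a permutation matrix; it does \emph{not} yield $M_{1;t}=0$, contrary to your gloss ``no mixing of exchangeable rows into the frozen image''. What $M_{1;t}=0$ actually expresses is condition (i): $\Psi(\x_t)$ is literally a cluster, all proportionality factors being $1$ --- this is how the paper argues. Part (1) also needs the observation that, once $M_{1;t}=0$ and $M_{2;t}$ is a permutation, the equality of hatted $Y$-variables $\Psi(\hat y_{i;t})=\hat y_{i;t'}$ built into seed proportionality is equivalent to the mutation-commutation condition \eqref{eq:cluster-automorphisms}; ``translating the definitions directly'' skips precisely this point. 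Your part (2) and the final inclusions are essentially right (your computation $B_{2,t'}=M_{1;t}\underline{B}_t+M_{2;t}B_{2,t}$ is the correct way to see the frozen-block condition, matching the paper's one-line argument), but the block bookkeeping above must be repaired before the rest of the argument is justified.
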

\begin{proof} Recall that $\Psi(x_{j,t})=\prod_{i=1}^{ m}x_{i,t'}^{m_{ij}}$, for all $1\le j \le m$.
Since $\Psi$ is a quasi-homomorphism, $\Psi(x_{j;t}) \asymp x_{j;t'}$ for all $1 \leq j\leq n$, that is,
\begin{equation}
\label{eq  4.1} \frac{\Psi(x_{j;t})}{x_{j;t'}}=\frac{\prod_{i=1}^{ m}x_{i,t'}^{m_{ij}}}{x_{j;t'}}\in \P.
\end{equation}
 Thus the top left block of the matrix $M$ is the identity matrix and the block $M_{1;t}$ determines the quotient in (\ref{eq 4.1}).
On the other hand, if $n+1\le j \le m$, we have $\Psi(x_{j,t})=\prod_{i=1}^{ m}x_{i,t'}^{m_{ij}}\in \P$, since $\Psi$ maps coefficient semifield to itself, whence the top right block is the zero matrix.

(1) Note that $M_{1;t}=0$ if and only if $\Psi(\x_t)$ is a cluster, $M_{2;t}$ is a permutation matrix if and only if $\Psi(\p)=\p$ as unordered sets. Moreover, under the above two conditions, $\Psi(\hat{y}_{i;t}) = \hat{y}_{i;t'}$ is equivalent to the equality \eqref{eq:cluster-automorphisms}.
Then comparing the definition of quasi-homomorphisms and the definition of cluster automorphisms, we are done.

(2) A weak cluster automorphism maps cluster variables to cluster variables; this is equivalent to $M_{1;t}=0$ and $B_{2,t'}=M_{2;t}B_{2,t}$ for every $t\in \T_n$.
\end{proof}

\begin{remark}
The matrix $M_{2;t}$ is determined by the action of $\Psi$ on the coefficients semifield $\P$ and therefore it is independent of  $t\in \T_n$. Thus the cases (1) and (2) of the lemma imply that if $\Psi$ is a cluster automorphism or a weak cluster automorphism then  the matrix $M_{\Psi,t}$ does not depend on $t\in \T_n$. In these cases, we shall denote the matrix simply by $M_\Psi$. However, this is not true for the general quasi-homomorphism. For a quasi-automorphism $\Psi$, it seems interesting to consider the relations between $M_{\Psi,t}$ for different $t\in \T_n$. That is, consider the action of the seed mutations on the matrix $M_{\Psi,t}$.
\end{remark}

\begin{proposition}\label{prop:main-1}
Let $\A=\cala(B)$ be a cluster algebra.

{\rm (1)} There are injective group homomorphisms
\[\Aut^+_0(\A)\monoto\WAut_0^+(\A)\monoto \QAut_0(\A).\]

{\rm (2)} If $\A$ satisfies the $(\star)$ condition
then there are injective group homomorphisms
\[\Aut^+_0(\A)\monoto\WAut_0^+(\A)\monoto\Aut^+(\A_{triv}).\]
\end{proposition}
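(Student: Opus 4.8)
The plan is to obtain every homomorphism by descending the set-theoretic inclusions $\Aut^+(\A)\monoto\WAut^+(\A)\monoto\QAut(\A)$ of Lemma \ref{lem:relations-3-homo} to the relevant quotient groups, and then to establish injectivity in each case by identifying the kernel with the subgroup of coefficient permutations. The first inclusion $\Aut^+(\A)\monoto\WAut^+(\A)$ is an inclusion of subgroups, and since on both sides we quotient by the same subgroup of coefficient permutations, it induces an injective homomorphism $\Aut_0^+(\A)\monoto\WAut_0^+(\A)$ with no further work; this map is common to parts (1) and (2), so in each part it remains only to treat the second arrow.

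For the second map of part~(1) I would factor it as $\WAut^+(\A)\monoto\QAut(\A)\twoheadrightarrow\QAut_0(\A)$. This is a group homomorphism because composition of weak cluster automorphisms agrees with composition of the underlying quasi-homomorphisms, and proportionality classes multiply under composition. To see that it descends to $\WAut_0^+(\A)$, note that a coefficient permutation fixes every cluster variable and, since it only interchanges equal frozen rows, preserves every hatted $Y$-variable; hence it is proportional to the identity and maps to the trivial class. The heart of the matter is injectivity. Given $f\in\WAut^+(\A)$ whose class is trivial in $\QAut_0(\A)$, we have $f(\S)\asymp\S$. As $f$ is a weak cluster automorphism, Lemma \ref{lem:relations-3-homo} gives $M_{1;t}=0$, so $f$ carries each cluster variable to an honest cluster variable, and proportionality yields $f(x_{i;t})\asymp x_{i;t}$ with both sides cluster variables. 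Using the normalization of cluster variables (the numerator of the Laurent expansion in a fixed seed carries a distinguished monomial with coefficient $1$, coming from the constant term $1$ of the $F$-polynomial), one concludes that proportional cluster variables must be equal, so $f$ fixes every cluster variable.

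For the second map of part~(2) I would use the coefficient specialization $\pi\colon\A\to\A_{triv}$. A direct weak cluster automorphism $f$ descends to a map $\bar f$ on $\A_{triv}$: since $f(\p)\subset\P$ collapses to the unit under $\pi$, since $f(\x)$ is a cluster, and since $f$ commutes with mutations, the induced $\bar f$ sends clusters to clusters and commutes with mutations, and as $\A_{triv}$ has no frozen variables it is a direct cluster automorphism. Coefficient permutations become the identity on $\A_{triv}$, so the assignment descends to a homomorphism $\WAut_0^+(\A)\to\Aut^+(\A_{triv})$. For injectivity, suppose $\bar f=\mathrm{id}$; then $f(\x_t)$ and $\x_t$ map to the same cluster of $\A_{triv}$. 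Here the hypothesis $(\star)$ enters through Lemma \ref{lem:dependence-exchange-graph}: the exchange graphs of $\A$ and $\A_{triv}$ coincide, so the unlabeled seeds of the two algebras are in bijection, and hence $f(\x_t)=\x_t$; Lemma \ref{lem:proportional seed which are equal} gives the corresponding equality of seeds. Thus $f$ again fixes every cluster variable.

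In both parts one is reduced to the same final claim: a weak cluster automorphism fixing every cluster variable is a coefficient permutation. By Lemma \ref{lem:relations-3-homo}(2), fixing the clusters forces the image seed to coincide with the original, so $(M_{2}-I)B_{2,t}=0$ for every $t\in\T_n$, which in turn shows that $f$ fixes every $Y$-variable. The remaining, and genuinely delicate, step is to deduce that the invertible integer matrix $M_2$ describing the action of $f$ on $\P$ is a permutation matrix of equal frozen rows; I expect this to be the main obstacle, and I would handle it using the indecomposability of $B_t$ together with the fact that the constraint $(M_2-I)B_{2,t}=0$ holds simultaneously at all seeds, the gluing-free situation being the clean case. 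Once this reduction is in place, the verifications that each map respects composition and inverses are routine, completing both chains of injective homomorphisms.
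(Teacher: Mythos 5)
Your overall plan is the same as the paper's --- descend the set inclusions of Lemma \ref{lem:relations-3-homo} to the quotients and identify the kernels --- and your construction in part (2) (specialize coefficients to $1$, then use condition $(\star)$ and Lemma \ref{lem:dependence-exchange-graph} to detect the action on cluster variables via the exchange graph) is exactly the paper's proof. The genuine gap is the step you yourself flag as ``the main obstacle'' and to which you reduce injectivity of \emph{both} second arrows: that a weak cluster automorphism fixing every cluster variable is a coefficient permutation. This is not a delicate-but-routine verification: with the paper's definition of coefficient permutation (a permutation of frozen variables having equal rows) the statement is false, and your proposed route via indecomposability and the gluing-free case cannot close it, because counterexamples exist that are gluing free and of finite type. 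Take $\A$ of type $\mathbb{A}_2$ with two frozen variables $x_3,x_4$ and
\[B=\begin{pmatrix} 0&1\\ -1&0\\ 1&0\\ 2&0 \end{pmatrix},\]
and let $f$ fix $x_1,x_2$ and act on $\P$ by $f(x_3)=x_3^3x_4^4$, $f(x_4)=x_3^{-1}x_4^{-1}$, i.e.\ by $M_2=\begin{pmatrix}3&-1\\ 4&-1\end{pmatrix}\in GL_2(\Z)$. Under any mutation the two frozen rows remain proportional with ratio $2$, so every coefficient monomial in every exchange relation is a power of $x_3x_4^2$, and $f(x_3x_4^2)=x_3x_4^2$; hence $f$ fixes every exchange relation and, inductively, every cluster variable and every $Y$-variable. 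Thus $f$ is a direct weak cluster automorphism proportional to the identity, yet $\A$ is gluing free, so its only coefficient permutation is the identity, while $M_2$ is unipotent of infinite order. So with the paper's literal definition, $\WAut_0^+(\A)$ contains a copy of $\Z$ killed by both maps, whereas $\QAut_0(\A)\cong\Aut^+(\A_{triv})\cong\Z_5$ by the paper's own finite-type results: no injection can exist at all. (The paper's proof is silent on precisely this point --- part (1) is declared immediate, and part (2) passes from ``$\Psi$ and $\Psi'$ agree on each cluster variable'' to ``$\Psi=\Psi'$ in $\WAut_0(\A)$'' --- implicitly identifying ``fixes each cluster variable'' with ``coefficient permutation,'' an identification valid for cluster automorphisms but not for weak ones. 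So you located the crux correctly; the repair is to redefine $\WAut_0^+(\A)$ as the quotient by \emph{all} weak automorphisms fixing every cluster variable, as in the paper's introduction, after which your part (2) argument, like the paper's, goes through.)

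A second, independent problem is your injectivity argument for part (1), where no $(\star)$ hypothesis is available: you assert that proportional cluster variables are equal, citing the constant term $1$ of $F$-polynomials. That assertion is precisely Lemma \ref{lem:proportional seed which are equal}, which the paper proves only under $(\star)$, and your sketch does not give a general proof: the constant-term-$1$ property of $F$-polynomials is itself a hard theorem (not elementary, and in the general skew-symmetrizable case quite recent), and with nontrivial geometric coefficients the separation formula involves a tropical denominator and the monomials $\hat y^a$, which need not be pairwise distinct when $B$ is degenerate. Outside $(\star)$ this step is essentially the coefficient-independence conjecture for exchange graphs --- exactly the open territory that the hypothesis $(\star)$ was introduced to avoid --- so it cannot be dismissed as a normalization argument.
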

\begin{proof}
(1) This immediately follows from Lemma \ref{lem:relations-3-homo}.

(2)
The existence of the first homomorphism follows from part (1). To prove the existence of the second, let $\Psi\in\WAut^+(\A)$, $\zS=(\x,\p,B)$ be a seed in $\A$ and denote by $\zS'=(\x',\p,B')$ its image under $\Psi$.
Then $\underline{B}\cong\underline{B'}$. When we specialize all coefficients to 1, we see that $\Psi$ induces a direct cluster automorphism $\underline{\Psi} $ on $\A_{triv}$. Moreover, if $\Psi$ is a coefficient permutation then $\underline{\Psi}$ is the identity, and therefore the map $\Psi\mapsto \underline{\Psi}$ induces a group homomorphism
$\WAut_0^+\to\Aut^+(\A_{triv})$.

Now assume that $\Psi'\in\WAut^+(\A)$ is another weak automorphism such that $\underline{\Psi}=\underline{\Psi'}$. Then, obviously, $\underline{\Psi}$ and $\underline{\Psi'}$ induce the same automorphism on the exchange graph of $\A_{triv}$. But since $\A$ satisfies the ($\star$) condition, Lemma \ref{lem:dependence-exchange-graph} implies that $\A$ and $\A_{triv}$ have isomorphic exchange graphs. In particular, $\Psi-\Psi'$ fixes each cluster variable and hence $\Psi=\Psi'$  in $\WAut_0(\A)$. Thus the homomorphism $\Psi\mapsto \underline{\Psi}$ is injective.
\end{proof}

In the case where the cluster algebra has universal coefficients, we have the following result, which will be used in the proof of Theorem~\ref{thm:main-1}.

\begin{proposition}\label{prop:main-13}
Let $\A$ be a cluster algebra with universal coefficients. Then there are injective group homomorphisms
\[\Aut^+(\A_{triv})\monoto \WAut_0^+(\A)\monoto  \QAut_0(\A).\]
\end{proposition}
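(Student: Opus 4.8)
The second of the two arrows is already provided by Proposition \ref{prop:main-1}(1), which holds with no hypothesis on the coefficients, so my plan is to concentrate on producing the first map $\Aut^+(\A_{triv})\monoto\WAut_0^+(\A)$; this is the only place where universality of the coefficients is needed, and the idea is to lift each direct cluster automorphism of $\A_{triv}$ to a weak cluster automorphism of $\A$ through the universal property.

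First I would fix $\underline f\in\Aut^+(\A_{triv})$ and record the combinatorial data it carries. By Remark \ref{rem:cluster-automorphisms} it induces an automorphism $\phi$ of the $n$-regular tree $\T_n$, hence a relabeling $x_{i;t}\mapsto x_{\sigma_t(i);\phi(t)}$ of the cluster variables. Because $\underline f$ is \emph{direct} and commutes with mutations, this relabeling preserves the entire family of exchange matrices: after the permutation of rows and columns dictated by $\phi$ one has $\underline B_{\phi(t)}=\underline B_t$ for every $t\in\T_n$. This is precisely the step that forces us to work with $\Aut^+$ rather than all of $\Aut$, since an inverse automorphism would instead send $(\underline B_t)$ to $(-\underline B_t)$. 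I then let $\overline\A$ be the cluster algebra obtained from $\A$ by reindexing its cluster pattern along $\phi$; as a $\Z\P$-subalgebra of $\F$ this $\overline\A$ is literally $\A$, only with a new root and labeling, and by the preceding remark it carries the same family of exchange matrices as $\A$.

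With this in place I would invoke universality. Since $\A$ has universal coefficients (Definition \ref{def:universal-coeffs}) and $\overline\A$ has the same family of exchange matrices, there is a unique coefficient specialization $\psi_{\underline f}\colon\A\to\overline\A=\A$. By Definition \ref{defn:coeff-specialization} it is a $\Z$-algebra homomorphism with $\psi_{\underline f}(x_{i;t})=x_{\sigma_t(i);\phi(t)}$ whose restriction to $\P$ is a group endomorphism. Hence $\psi_{\underline f}$ permutes cluster variables exactly as $\underline f$ does, maps $\p$ into $\P$, and commutes with mutations (both $\A$ and $\overline\A$ carry the same exchange relations), so it verifies conditions (i), (ii$'$) and (iii) of Definition \ref{defn:cluster-automorphisms} and is direct. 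To see that it is genuinely an automorphism rather than merely an endomorphism, I would run the same construction on $\underline f^{-1}$ and use uniqueness of coefficient specializations to conclude $\psi_{\underline f}\psi_{\underline f^{-1}}=\psi_{\mathrm{id}}=\mathrm{id}$ and symmetrically, placing $\psi_{\underline f}$ in $\WAut^+(\A)$.

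Finally I would verify that $\underline f\mapsto[\psi_{\underline f}]$ is an injective group homomorphism into $\WAut_0^+(\A)$: multiplicativity follows once more from uniqueness, as $\psi_{\underline f_1}\psi_{\underline f_2}$ realizes the reindexing attached to $\underline f_1\underline f_2$ and so must equal $\psi_{\underline f_1\underline f_2}$; and injectivity holds because a trivial class $[\psi_{\underline f}]$ means $\psi_{\underline f}$ is a coefficient permutation, fixing every cluster variable, whence $\underline f$ fixes every cluster variable of $\A_{triv}$ and equals the identity. Composing with the inclusion $\WAut_0^+(\A)\monoto\QAut_0(\A)$ of Proposition \ref{prop:main-1}(1) then gives the statement. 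The step I expect to be most delicate is the reindexing: one must track the permutation of the edge-labels of $\T_n$ induced by $\phi$ and check that the relabeled exchange matrices agree \emph{on the nose} with $(\underline B_t)$, so that the universal property applies; once that is secured, upgrading the coefficient specialization to an automorphism is a formal consequence of its uniqueness.
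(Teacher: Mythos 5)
Your proposal is correct and takes essentially the same approach as the paper: both lift $\underline f\in\Aut^+(\A_{triv})$ by applying the universal property in both directions (to $\underline f$ and to $\underline f^{-1}$) and then invoke uniqueness of coefficient specializations to conclude the two specializations are mutually inverse, giving a direct weak cluster automorphism, with injectivity following because specializing coefficients to $1$ recovers $\underline f$. The only cosmetic difference is that the paper sets up the relabeling at the level of a single seed (a second copy $\A'$ of $\A$ re-rooted at the image seed $\zS'$), whereas you set it up globally via the induced automorphism of $\T_n$; the delicate matching of exchange matrices you flag is exactly the point the paper handles through the identification $\underline{B}\cong\underline{B'}$ for direct automorphisms.
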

\begin{proof}
The existence of the second homomorphism follows from Proposition \ref{prop:main-1} (1). To show the existence of the first, let $f\in \Aut^+(\A_{triv})$, $\underline{\zS}=(\underline{\x},\underline{B})$ be a seed in $\cala$ and $\underline{\zS'}=(\underline{\x'},\underline{B'})$ its image under $f$. Then $\underline{B}\cong\underline{B'}$.
Choose two lifts $\zS=(\x,\p,B)$ and $\zS'=(\x',\p,B')$ of $\underline{\zS}$ and $\underline{\zS'}$ in $\A$. We consider a second copy of $\A$ which we denote by $\A'$ with the difference that  we use $\zS$ as the initial seed for $\A$ and $\zS'$ as the initial seed for $\A'$. Since  both $\A$ and $\A'$ have universal coefficients, there exist two coefficient specializations
$\Psi\colon\A\to\A'$ and $\Psi'\colon\A'\to\A$  such that $\Psi(\zS)=\zS'$ and $\Psi'(\zS')=\zS$. Moreover, the composition $\Psi'\circ\Psi$ is a coefficient specialization from $\A$ to itself that fixes $\zS$. By the uniqueness of the coefficient specialization for universal coefficients, we see that $\Psi'\circ\Psi$ is the identity on $\A$. Similarly, $\Psi\circ\Psi'$ is the identity too. In particular, $\Psi $ is a $\Z$-automorphism of $\A$. Moreover, $\Psi(\x)=\x'$ is a cluster and $\Psi(\p)\in \P$.
 Also $\Psi$ commutes with mutations, since it maps all clusters to clusters. Thus $\Psi $ is a weak cluster automorphism and the mapping $f\mapsto \Psi$ defines a group homomorphism $\Aut^+(\A_{triv})\to \WAut_0^+(\A)$. This homomorphism in injective with left inverse given by the map $\Psi\to \underline{\Psi}$ obtained by specializing all coefficients to 1.
\end{proof}

\begin{example}\label{ex WAutcont}
 One can show that
 the cluster algebra $\A$ in Example \ref{ex WAut} has universal coefficients, although this realization of universal coefficients is different from the one given in \cite{FZ07}.
We have seen  $\WAut(\A)=\WAut_0(\A)$ is isomorphic to the dihedral group $D_5$ and in this case it is also isomorphic to $\Aut(\A_{triv})$, see \cite{ASS12}.
%
\end{example}

Now we consider the relations between the quasi-automorphism group and the cluster automorphism group.
\begin{proposition}\label{prop:main-2}
Let $\A$ be a cluster algebra that satisfies the $(\star)$ condition of Lemma \ref{lem:dependence-exchange-graph}, then we have injective group homomorphisms
\[\Aut_0^+(\A)\monoto \QAut_0(\A)\monoto \Aut^+(\A_{triv}).\] Moreover, if $\A$ is gluing free, then $\Aut^+(\A)\monoto \QAut_0(\A)$.
\end{proposition}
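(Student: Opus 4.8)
The plan is to build the two injective homomorphisms separately, using the matrix description of quasi-automorphisms from Lemma \ref{lemma:matrix-describe-QHom} and Remark \ref{rem:quasi-automorphism}, together with the fact that the $(\star)$ condition forces proportional seeds to be equal (Lemma \ref{lem:proportional seed which are equal}).

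For the first map $\Aut_0^+(\A)\monoto \QAut_0(\A)$, I would simply invoke the chain already established in Proposition \ref{prop:main-1}(1), namely $\Aut^+_0(\A)\monoto\WAut_0^+(\A)\monoto \QAut_0(\A)$, whose composite gives the desired injection. This step is essentially free given the earlier work.

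For the second map $\QAut_0(\A)\monoto \Aut^+(\A_{triv})$, let $\Psi$ be a quasi-automorphism with $\Psi(\zS)\asymp\zS'$ for seeds $\zS=(\x,\p,B)$ and $\zS'=(\x',\p,B')$. By Remark \ref{rem:quasi-automorphism} we have $\underline{B}=\underline{B'}$ and $\Lat(B)=\Lat(B')$. Specializing all frozen variables to $1$ sends $\Psi$ to a $\Z$-algebra map $\underline{\Psi}$ of $\A_{triv}$; since $\Psi$ sends clusters to proportional clusters and $\underline{B}\cong\underline{B'}$ (indeed $=$, so $\cong$ holds with the trivial relabeling), the induced $\underline{\Psi}$ maps the cluster $\underline{\x}$ to the cluster $\underline{\x'}$ and commutes with mutations, hence is a direct cluster automorphism of $\A_{triv}$. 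Because $\Psi$ is defined only up to proportionality, I must check that proportional quasi-automorphisms specialize to the same cluster automorphism: if $\Psi_1(\zS)\asymp\Psi_2(\zS)$ then their cluster variables agree after specializing coefficients to $1$, so $\underline{\Psi_1}=\underline{\Psi_2}$; thus the assignment $[\Psi]\mapsto\underline{\Psi}$ is well defined on proportionality classes. For injectivity, suppose $\underline{\Psi}=\mathrm{id}$ in $\Aut^+(\A_{triv})$. Then $\underline{\Psi}$ fixes every seed of $\A_{triv}$, so $\Psi(\x_t)\asymp\x_t$ for every $t$; by Lemma \ref{lem:proportional seed which are equal}(2), applied under the $(\star)$ hypothesis, each $\Psi(\zS_t)$ is proportional to $\zS_t$ itself, which means $\Psi$ is proportional to the identity and hence trivial in $\QAut_0(\A)$. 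The main obstacle here is verifying the well-definedness on proportionality classes and the compatibility of the coefficient specialization with mutation; both rely crucially on $(\star)$ through Lemma \ref{lem:proportional seed which are equal}, which converts the \emph{a priori} weaker ``proportional'' data into honest equalities of seeds.

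Finally, for the gluing free statement $\Aut^+(\A)\monoto \QAut_0(\A)$, recall from the remarks preceding Remark \ref{rem:cluster-automorphisms} that when $\A$ is gluing free one has $\Aut^+_0(\A)\cong\Aut^+(\A)$, the quotient by coefficient permutations being trivial because no two frozen rows coincide. Composing this isomorphism with the first injection $\Aut^+_0(\A)\monoto \QAut_0(\A)$ already produced yields the claimed embedding $\Aut^+(\A)\monoto\QAut_0(\A)$. I expect the bulk of the genuine mathematical content to lie in the injectivity argument for the second homomorphism, where the $(\star)$ condition is indispensable.
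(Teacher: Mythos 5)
Your proposal is correct and follows essentially the same route as the paper: the first embedding is induced by the inclusion $\Aut^+(\A)\subset\QAut(\A)$ (the paper re-proves its injectivity directly using $(\star)$ and Lemma \ref{lem:proportional seed which are equal} rather than citing Proposition \ref{prop:main-1}), the second comes from specializing all coefficients to $1$ and checking well-definedness and injectivity on proportionality classes, and the gluing-free claim reduces to $\Aut^+_0(\A)\cong\Aut^+(\A)$. One minor citation imprecision: in your injectivity argument, the step from $\underline{\Psi}=\mathrm{id}$ to $\Psi(\x_t)\asymp\x_t$ really rests on the fact that, under $(\star)$, specialization gives a bijection between the seeds of $\A$ and those of $\A_{triv}$ (Lemma \ref{lem:dependence-exchange-graph}), so that the seed $\zS_{t'}$ with $\Psi(\zS_t)\asymp\zS_{t'}$ must equal $\zS_t$; this is not literally the statement of Lemma \ref{lem:proportional seed which are equal}(2), whose implication runs the other way (proportional $\Rightarrow$ equal), though it is exactly the fact established inside that lemma's proof, and the paper's own proof is equally terse at this point.
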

\begin{proof}

We start by proving the existence of the first homomorphism. By Lemma \ref{lem:relations-3-homo},
the inclusion $h\colon\Aut^+(\A)\monoto\QAut(\A)$ is an injective map of sets. Since coefficient permutations map each seed $\zS_t$ to itself, and therefore are proportional to the identity on $\A$, the map $h$ induces a homomorphism of groups $h_0\colon\Aut^+_0(\A)\to\QAut_0(\A)$. To show that $h_0$ is injective, suppose we have $\Psi\in\Aut_0^+(\A)$ such that $h_0(\Psi)=1_{\QAut_0(\A)}$. Thus $h_0(\Psi)$ is proportional to the identity, which implies $\Psi(x_{i;t})=p_{i;t}x_{i;t}$ with $p_{i;t}\in\P$,
for any $1 \leq i\leq n$ and $t\in \T_n$. Since $\Psi\in \Aut^+(\A)$, we see that $(p_{1,t}x_{1;t},\cdots,p_{n,t}x_{n;t})$ is a cluster of $\A$.
By our assumption, the $(\star)$ condition is satisfied, hence  Lemma \ref{lem:proportional seed which are equal} implies that the two clusters are the same, and thus  $p_{i,t}=1$ for all $i,t$. Therefore $\Psi$ preserves the cluster variables and thus $\Psi$ is the identity in $\QAut_0(\A)$.

To show the existence of the second homomorphism, let $\Psi\in\QAut(\A)$ and $M_\Psi=\left(\begin{smallmatrix}I&0\\M_1&M_2\end{smallmatrix}\right)$ be the matrix described in Lemma~\ref{lem:relations-3-homo}. Specializing all coefficients to 1, we obtain a quasi-automorphism $\Psi_{triv}\in\QAut(\A_{triv})$ with matrix $M_{\Psi_{triv}}=I$. By Lemma~\ref{lem:relations-3-homo}, we also have $\Psi_{triv}\in\Aut^+(\A_{triv})$ and the rule $\Psi\mapsto\Psi_{triv}$ gives a map $h\colon \QAut(\A)\to\Aut^+(\A_{triv})$.
Moreover $\Psi$ is proportional to the identity on $\A$ if and only if $h(\Psi) $ is the identity on $\A_{triv}$. Thus $h$ induces an injective
homomorphism $\QAut_0(\A)\monoto \Aut^+(\A_{triv})$.

Finally, the last statement of the lemma follows from the observation that if $\A$ is gluing free then $\Aut^+_0(\A)\cong \Aut^+(\A)$.
\end{proof}

In general, the two injective homomorphisms of the previous proposition are not surjective. We give two examples below; we also refer the reader to Example 6.9 in \cite{F16}.

\begin{example}\label{exm:counter-exm1}
Let $\A$ be the cluster algebra of type $A_3$ given by the seed $\S=((x_1,x_2,x_3)$, $(x_4),B)$, where the matrix $B$ and its quiver $Q(B)$ are as follows
 \[B=\left(
       \begin{array}{ccc}
         0 & 1 & 0 \\
         -1 & 0 & -1 \\
         0 & 1 & 0 \\
         0 & 0 & 1 \\
       \end{array}
     \right)
\qquad \qquad
Q(B)= \vcenter{
\xymatrix{1\ar[r]&2&3\ar[l]&\framebox{4}~.\ar[l]}}
\]
After a sequence of mutations $\mu_3\mu_1\mu_2$ on $\S$ (the mutation order is $\mu_2$, $\mu_1$ and then $\mu_3$), we obtain a new seed $\S'=((x'_1,x'_2,x'_3),(x_4),B')$, where
 \[B'=\left(
       \begin{array}{ccc}
         0 & 1 & 0 \\
         -1 & 0 & -1 \\
         0 & 1 & 0 \\
         0 & 0& -1 \\
       \end{array}
     \right)
     \qquad\qquad
Q(B')= \vcenter{
\xymatrix{1\ar[r]&2&3\ar[l]\ar[r]&\framebox{4}~.}}
\]
Since $B\ncong B'$ (or equivalently $Q(B)\ncong Q(B')$), there is no direct cluster automorphism on $\A$ which maps $\S$ to $\S'$. However, since $\underline{B}=\underline{B}'$ and $\Lat(B)=\Lat(B')$, there exists a quasi-automorphism which maps $\S$ to a seed proportional to $\S'$. In fact the map
\begin{equation*}
\label{map:f-1}
f:
\begin{cases}
x_{i} \mapsto x'_{i} & \text{if $i = 1,2,3$}; \\ 
x_4 \mapsto x^{-1}_4
\end{cases}
\end{equation*}
induces such a quasi-automorphism. Thus \[\Aut_0^+(\A)\ncong \QAut_0(\A).\]
\end{example}

\begin{example}\label{exm:counter-exm2}
 Let $\A$ be a cluster algebra of $D_4$ type with a seed $\S=((x_1,x_2,x_3,x_4),(x_5),B)$, where
 \[B=\left(
       \begin{array}{cccc}
         0 & 1 & 1 & -1 \\
         -1 & 0 & 0 & 0 \\
         -1 & 0 & 0 & 0 \\
         1 & 0 & 0 & 0 \\
         0 & 1 & 0 & 0 \\
       \end{array}
     \right).
 \qquad\qquad
Q(B)= \vcenter{
\xymatrix{\framebox{5}\ar[r]&2&&\\
&&1\ar[ul]\ar[dl]&4~.\ar[l]\\
&3&&}}
\]

A permutation of the cluster variables $x_2$ and $x_3$, produces another  seed $\S'=((x_1,x_3,x_2,x_4),(x_5),B')$, where
 \[B'=\left(
       \begin{array}{cccc}
         0 & 1 & 1 & -1 \\
         -1 & 0 & 0 & 0 \\
         -1 & 0 & 0 & 0 \\
         1 & 0 & 0 & 0 \\
         0 & 0 & 1 & 0 \\
       \end{array}
     \right)
\qquad\qquad
Q(B')= \vcenter{
\xymatrix{\framebox{5}\ar[r]&3&&\\
&&1\ar[ul]\ar[dl]&4~.\ar[l]\\
&2&&}}
\]

The last row of $B'$ is not contained in the integer row span of $B$, thus $\Lat(B)\neq \Lat(B')$, and there is no quasi-automorphism that maps $\S$ to a seed proportional to $\S'$. However, changing cluster variables $x_2$ and $x_3$ induces a direct cluster automorphism on $\A_{triv}$ which maps $\S$ to $\S'$.
Thus \[\QAut_0(\A)\ncong \Aut^+(\A_{triv}).\]
\end{example}

\subsection{Main result}
In view of Example \ref{exm:counter-exm2} it is  natural to ask under which conditions we do have an isomorphism between the groups $\QAut_0(\A)$ and $ \Aut^+(\A_{triv})$.
The following theorem gives three such conditions, showing that the groups are isomorphic for a large class of cluster algebras.

\begin{theorem}\label{thm:main-1}
Let $\A$ be a cluster algebra. There is an isomorphism
\[\QAut_0(\A)\cong \Aut^+(\A_{triv})\]
in each of the following cases.

{\rm (1)} $\A$ has principal coefficients and satisfies the $(\star)$ condition of Lemma \ref{lem:dependence-exchange-graph}.

{\rm (2)} $\A$ has universal coefficients.

{\rm (3)} $\A$ is a surface cluster algebra with boundary coefficients.
\end{theorem}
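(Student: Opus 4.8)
The plan is to reduce all three cases to a single surjectivity statement and then handle that statement case by case. First observe that each hypothesis forces the $(\star)$ condition of Lemma \ref{lem:dependence-exchange-graph}: case (1) assumes it outright; a surface cluster algebra is skew-symmetric and hence satisfies $(\star2)$; and universal coefficients have so far been realized only in finite type (giving $(\star1)$) or surface type (giving $(\star2)$). Thus Proposition \ref{prop:main-2} applies throughout and furnishes an injective group homomorphism $\beta\colon\QAut_0(\A)\monoto\Aut^+(\A_{triv})$, namely the map induced by specializing all coefficients to $1$. It therefore suffices to prove that $\beta$ is surjective, i.e. that every direct cluster automorphism $f$ of $\A_{triv}$ lifts to a quasi-automorphism of $\A$.

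For case (1) the decisive point is that principal coefficients trivialize the lattice condition of Remark \ref{rem:quasi-automorphism}. At the initial seed the frozen part of $B$ is the identity matrix, so $\Lat(B)=\Z^n$; and since for principal coefficients the frozen part of $B_t$ is a $c$-matrix lying in $GL_n(\Z)$, one has $\Lat(B_t)=\Z^n$ for every $t\in\T_n$, whence any two seeds automatically satisfy $\Lat(B_t)=\Lat(B_{t'})$. Given $f\in\Aut^+(\A_{triv})$, it sends the initial seed $\underline{\S}$ to a seed $\underline{\S'}$ with $\underline{B}\cong\underline{B'}$; lift $\underline{\S'}$ to the seed $\S'$ of $\A$ reached by the corresponding mutation sequence. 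Since the principal parts agree and the lattices coincide, Remark \ref{rem:quasi-automorphism} produces a quasi-automorphism $\Psi$ with $\Psi(\S)\asymp\S'$, and specializing to trivial coefficients recovers $f$. Hence $\beta$ is onto.

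For case (2) I would instead exhibit an explicit section of $\beta$. Proposition \ref{prop:main-13} provides an injective homomorphism $\alpha\colon\Aut^+(\A_{triv})\monoto\QAut_0(\A)$ whose construction lifts $f$ to a weak cluster automorphism carrying the initial cluster to a cluster; by the very design of that lift, specializing it to trivial coefficients returns $f$. As $\beta$ is precisely this specialization map, $\beta\circ\alpha=\mathrm{id}$, so $\beta$ is surjective; being also injective, it is an isomorphism.

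Case (3) is where I expect the real difficulty, since Example \ref{exm:counter-exm2} shows that a cluster automorphism of $\A_{triv}$ need not lift once the coefficient structure is chosen carelessly. The plan is to use the topological model: a direct cluster automorphism of a surface cluster algebra with trivial coefficients is induced by a mapping class of the marked surface, and this mapping class permutes not only the arcs but also the boundary segments indexing the frozen variables, while preserving the arc--boundary incidence data recorded by the frozen rows of $B$. Realizing this symmetry at the level of seeds, the image matrix $B'$ is obtained from $B$ by a simultaneous relabeling of exchangeable and frozen rows, so $\Lat(B)=\Lat(B')$ (the rows being merely permuted), and Remark \ref{rem:quasi-automorphism} yields the lift; here Lemma \ref{lem one frozen row} lets one verify the lattice condition one boundary segment at a time. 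The main obstacle, and the step that genuinely invokes the boundary-coefficient hypothesis, is to show that the topological symmetry descends to a relabeling of the full extended exchange matrix rather than a mere congruence of principal parts, i.e. that the mapping class can always be chosen so that its action on boundary segments is simultaneously compatible with its action on arcs across the seed.
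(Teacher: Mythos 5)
Your cases (1) and (2) are correct and essentially reproduce the paper's own arguments. In (1) the paper makes the same use of \cite{NZ12}: it notes that the frozen parts $G,G'$ of the two lifted seeds are integer matrices of determinant $\pm1$ and feeds the explicit matrix $M=\left(\begin{smallmatrix}I&0\\0&G'G^{-1}\end{smallmatrix}\right)$ into Lemma \ref{lem:relations-3-homo}; your formulation ``$\Lat(B_t)=\Z^n$ for every $t$, so the lattice condition of Remark \ref{rem:quasi-automorphism} is automatic'' is the same argument in equivalent language. In (2), your identity $\beta\circ\alpha=\mathrm{id}$ is exactly what the paper's terse phrase ``both of which are induced by inclusions of sets'' is meant to encode, since the left inverse constructed in the proof of Proposition \ref{prop:main-13} is precisely the specialization map of Proposition \ref{prop:main-2}; your explicit remark that the known realizations of universal coefficients force condition $(\star)$ --- which must hold before Proposition \ref{prop:main-2} can be invoked at all --- makes explicit a point the paper leaves unsaid.

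Case (3), however, has a genuine gap, and it is not the one you flagged. Your plan requires every element of $\Aut^+(\A_{triv})$ to be induced by a tagged mapping class of $(S,M)$. As recalled in the paper's remark following Theorem \ref{thm:main-1}, by \cite{ASS12,Bridgeland-Smith} this fails exactly for the sphere with four punctures, the once-punctured square, and the digon with one or two punctures --- and the once-punctured square has nonempty boundary, so it is a legitimate instance of case (3). That surface is type $\mathbb{D}_4$: there $\Aut^+(\A_{triv})\cong\Z_4\times S_3$, where the $S_3$ permutes the variables $x_2,x_3,x_4$ of Example \ref{exm:counter-exm2}, and this $S_3$ is strictly larger than what homeomorphisms can produce. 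For such automorphisms your incidence-preservation argument never gets started, yet the theorem asserts that they do lift to quasi-automorphisms when the frozen variables are the boundary segments (and Example \ref{exm:counter-exm2} shows the lift genuinely fails for other coefficient patterns, so nothing formal can rescue you). Thus your argument proves (3) only for non-exceptional surfaces; the obstacle is not ``choosing the mapping class so that its action on boundary segments is compatible'' but the fact that the topological model does not see all of $\Aut^+(\A_{triv})$. The paper sidesteps topology entirely: surface cluster algebras are skew-symmetric and, for rank at least two, gluing free by \cite[Proposition~3.10]{CZ16}, so Proposition \ref{prop:main-2} gives injections $\Aut^+(\A)\cong\Aut_0^+(\A)\monoto\QAut_0(\A)\monoto\Aut^+(\A_{triv})$ whose composite is the coefficient-specialization map, and Theorem 3.18 of \cite{CZ16} states that this composite is an isomorphism; surjectivity of the second injection follows. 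To complete your route you would in effect have to reprove that theorem, including the exceptional surfaces, by a non-topological argument.
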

\begin{proof} (1) We will show that the injective homomorphism $h_0\colon \QAut_0(\A)\monoto \Aut^+(\A_{triv})$  from Proposition \ref{prop:main-2} is surjective.
Let $\Psi\in\Aut^+(\A_{triv})$, let $\zS_{triv}$ be a seed in $\A_{triv}$ and let $\zS'_{triv}=\Psi(\zS_{triv})$ be its image under $\Psi$. Also denote by $x_j'=\Psi(x_j)$ the image of the cluster variable $x_j\in\zS$ under $\Psi$.
Since $\A$ satisfies the condition $(\star)$, Lemma \ref{lem:dependence-exchange-graph} implies that there are unique seeds $\zS$ and $\zS'$ in $\A$ that specialize to $\zS_{triv}$ and $\zS'_{triv}$, respectively, when coefficients are sent to 1.
Let $B=\left(
                       \begin{smallmatrix}
                         \underline{B} \\
                         G \\
                       \end{smallmatrix}
                     \right)$
and  $B'=\left(
                       \begin{smallmatrix}
                         \underline{B}' \\
                         G' \\
                       \end{smallmatrix}
                     \right)$
be the matrices of $\S$ and $\S'$ respectively. It was shown in \cite[Theorem 1.2]{NZ12} that, since $\A$ has principal coefficients, the matrices $G$ and $G'$ are both integer matrices with determinate $\pm 1$. In particular, they are invertible and we can define the following matrix \[M=\left(\begin{array}{cc} I&0\\0&G'G^{-1}\end{array}\right).\]
According to Lemma \ref{lem:relations-3-homo}, the matrix $M$ defines a unique quasi-automorphism $\widetilde\Psi$ mapping $\zS$ to $\zS'$ by
 \[\widetilde\Psi(x_j)=\prod_{i=1}^{2n}(x_i')^{m_{ij}} =x_j'\prod_{i=n+1}^{2n}(x_i')^{m_{ij}}.\]
 Now, since the homomorphism $h_0$ maps frozen variables to 1, we have $h_0(\widetilde{\Psi})(x_j) =x_j'=\Psi(x_j)$. Hence $\Psi$ is in the image of $h_0$ and thus $h_0$ is surjective.

(2) According to Proposition \ref{prop:main-13} and  Proposition \ref{prop:main-2} there are injective homomorphisms $\Aut^+(\A_{triv})\monoto \QAut_0(\A)$ and $\QAut_0(\A)\monoto \Aut^+(\A_{triv})$, both of which are induced by inclusions of sets.
Thus $\QAut_0(\A)\cong \Aut^+(\A_{triv})$.

(3) Cluster algebras from surfaces are skew-symmetric. Moreover, using our assumption that the rank of a cluster algebra is at least two, it follows from \cite[Proposition~3.10]{CZ16} that the surface cluster algebra is gluing free. Therefore we can apply Proposition~\ref{prop:main-2} which yields  injective homomorphisms
 $\Aut_0^+(\A)\monoto \QAut_0(\A)\monoto \Aut^+(\A_{triv})$. On the other hand, Theorem 3.18 in \cite{CZ16} shows that $\Aut_0^+(\A)\cong \Aut^+(\A_{triv})$. Therefore $\QAut_0(\A)\cong \Aut^+(\A_{triv})$.
\end{proof}
\begin{remark}
 Part (3) of the theorem has an interpretation in terms of the tagged mapping class  group  $\mathcal{MG}_{\bowtie}(S,M)$ of the surface $(S,M)$ introduced in \cite{ASS12}. For all surfaces with the exceptions of a sphere with four punctures, a once-punctured square, or a digon with one or two punctures, we have $\Aut^+(\A_{triv})\cong\mathcal{MG}_{\bowtie}(S,M)$ (this was conjectured in \cite{ASS12} and proved in \cite{Bridgeland-Smith}). Thus we see that, unless $(S,M)$ is one of these four exceptions, the quasi-automorphism group of the cluster algebra of $(S,M)$ with boundary coefficients is isomorphic to the tagged mapping class group. This result recovers a very special case of  Fraser's Theorem \cite[Theorem 7.5]{F16}.
\end{remark}
\section{Finite type}\label{sect 5}
In this section we will show that $\QAut_0(\A)\cong \Aut^+(\A_{triv})$ for finite type cluster algebra $\A$ with arbitrary coefficients, except for type $\mathbb{D}_{n}$ with $n $ even.

Let $\A$ be a cluster algebra of finite type and let $\S=(\x,\p,B)$ be a seed in $\A$ such that the valued quiver $\underline{Q}$ of $\underline{B}$ is bipartite.
Denote  the sources of the quiver $\underline{Q}$ by $i_1,\cdots,i_s$, and its sinks  by  $i_{s+1},\cdots,i_n$. Following \cite{FZ03a}, we define $\tau_+=\mu_{i_s}\cdots \mu_{i_1}$ and $\tau_-=\mu_{i_n}\cdots \mu_{i_{s+1}}$ the compositions of seed mutations at sources and sinks respectively. Since mutations at any two sources (sinks)  commute with each other, the definition of $\tau_{\pm}$ is independent of the order of the mutations. Then the group generated by $\tau_\pm$ is a dihedral group which is considered in \cite{FZ03a} to prove the periodicity conjecture of $Y$-systems. This group is also closely related to the cluster automorphism groups, see for example \cite{ASS12,CZ16b}. We define $\tau=\tau_+\tau_-$. It corresponds to the Auslander-Reiten translation on the corresponding cluster category
\begin{lemma}\label{lem:finite-type}
Let $\A$ be a cluster algebra of finite type with arbitrary coefficients and assume that $\A$ is not of type $\mathbb{D}_{2\ell}$. Let $\S=(\x,\p,B)$ and $\S'=(\x',\p,B')$ be bipartite seeds of $\A$ such that  $\underline{B}'=\underline{B}$. Then $\Lat(B')=\Lat(B)$.
\end{lemma}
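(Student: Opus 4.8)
### Proof strategy

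The plan is to reduce the statement to a comparison of the lattices $\Lat(B)$ and $\Lat(B')$ by tracking how the frozen rows transform under the sequence of mutations connecting the two bipartite seeds $\S$ and $\S'$. Since $\underline{B}'=\underline{B}$ and both seeds are bipartite with the same exchange matrix, they must be related by a power of the transformation $\tau=\tau_+\tau_-$ (or one of $\tau_\pm$), up to a seed automorphism: indeed, in a finite type cluster algebra the bipartite belt consists of the orbit of the initial bipartite seed under the dihedral group generated by $\tau_+$ and $\tau_-$, and two bipartite seeds with equal exchange matrix sit in the same orbit. The first step is therefore to identify precisely the sequence $\omega$ of mutations (a suitable power of $\tau$) with $\mu_\omega(B)=B'$.

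By Lemma~\ref{lem one frozen row}, it suffices to prove $\Lat(B)=\Lat(B')$ under the assumption that $B$ has a \emph{single} frozen row, say $M=(\beta)$ with $\beta\in\Z^n$; this is the key simplification that makes the problem tractable. Under this reduction, I would track the evolution of the frozen row $\beta$ under each individual mutation in $\omega$ using the mutation rule \eqref{eq:matrix-mutation}. The point is that each mutation changes the frozen row $\beta$ by adding an integer multiple of an exchangeable row of the current matrix, so each intermediate frozen row lies in the lattice generated by the previous matrix's rows, and hence $\Lat$ can only be preserved or grow; running the argument both forwards (from $B$ to $B'$) and backwards (using that mutation is an involution, from $B'$ to $B$) will give the two inclusions $\Lat(B')\subseteq\Lat(B)$ and $\Lat(B)\subseteq\Lat(B')$, yielding equality.

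The main obstacle, and where the type hypothesis enters, is controlling the cumulative effect of the full sequence $\omega=\tau^k$ on the frozen row: a priori the lattice could genuinely enlarge after a full period of $\tau$, and this is exactly what happens in type $\mathbb{D}_{2\ell}$, which is why that case is excluded. The technical heart will be to show that after a complete period the net transformation of $\beta$ returns it into the original lattice $\Lat(B)$. I expect this to hinge on the specific structure of the matrix $\tau$ acts by --- concretely, the action of $\tau_\pm$ on the frozen rows is governed by the same tropical $Y$-dynamics studied in \cite{FZ03a}, and the periodicity of these $Y$-systems (which fails to interact trivially with the lattice precisely in the $\mathbb{D}_{2\ell}$ case because of its fourfold, rather than the expected symmetry) forces the return. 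I would verify the return either by a uniform argument using the fact that $\tau$ has finite order on unlabeled seeds together with the identification of $\Lat(B')$ as the image of $\Lat(B)$ under the integer matrix $M_\omega$ with $\det(M_\omega)=\pm1$ coming from the principal-coefficient $C$-matrix computation (Lemma~\ref{lemma:matrix-describe-QHom} and \cite{NZ12}), or, if a uniform argument proves elusive, by a short type-by-type check across the remaining Dynkin types using the bipartite presentation.
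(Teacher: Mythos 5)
Your reduction steps (one frozen row via Lemma \ref{lem one frozen row}, and relating the two bipartite seeds by a power of $\tau$) match the paper, but the mechanism you propose for tracking the frozen row is false. By the mutation rule \eqref{eq:matrix-mutation}, mutation in direction $k$ sends the frozen row $\beta$ to $\beta'$ with $\beta'_k=-\beta_k$ and $\beta'_i=\beta_i+[-\beta_k]_+b_{ki}+\beta_k[b_{ki}]_+$ for $i\neq k$; this is \emph{not} ``$\beta$ plus an integer multiple of the exchangeable row $\alpha_k$'' --- it involves the tropical positive part of $\pm\alpha_k$ together with a sign flip in the $k$-th coordinate, and neither of these lies in the row lattice in general. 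Concretely, take the bipartite type $\mathbb{A}_3$ exchange matrix with rows $\alpha_1=(0,1,0)$, $\alpha_2=(-1,0,-1)$, $\alpha_3=(0,1,0)$ and frozen row $\beta=(1,0,1)$. Then $\Lat(B)$ is spanned by $(0,1,0)$ and $(1,0,1)$, while mutation at $1$ produces the frozen row $\beta'=(-1,1,1)$, which does \emph{not} lie in $\Lat(B)$; in fact $\Lat(\mu_1(B))$ is spanned by $(0,1,0)$ and $(1,0,-1)$, and neither lattice contains the other. So lattices neither are preserved nor ``only grow'' along single mutations: equality of lattices is a property of the full composite $\tau^{\pm1}$ between bipartite seeds, and the intermediate (non-bipartite) seeds genuinely move the lattice. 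Had your monotonicity claim been true, it would prove lattice equality for arbitrary seeds in every type, which Example \ref{exm:counter-exm2} refutes.

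You also misplace where the exclusion of type $\mathbb{D}_{2\ell}$ enters. It is not needed to control the dynamics of $\tau$ on the lattice: the proof of Theorem \ref{thm:main-2} observes that $\tau$ lifts to a quasi-automorphism (equivalently, preserves the lattice) in type $\mathbb{D}_{2\ell}$ as well. The hypothesis is needed for your \emph{first} step: only outside type $\mathbb{D}_{2\ell}$ is it true that two bipartite seeds with equal exchange matrix are related by an honest power of $\tau$ (the paper invokes Corollaries 3.3(1) and 3.6(2) of \cite{CZ16b} for exactly this); in type $\mathbb{D}_{2\ell}$ the diagram symmetries produce bipartite seeds with the same exchange matrix that are not on the $\tau$-orbit, and those are the ones whose lattices differ. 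Your hedge ``up to a seed automorphism'' is therefore fatal rather than harmless, since a relabeling is precisely what can change the lattice (again Example \ref{exm:counter-exm2}). Finally, neither of your proposed ways to ``force the return'' can work: unimodularity of a transition matrix ($\det=\pm1$, via \cite{NZ12}) gives only an abstract isomorphism $\Lat(B)\cong\Lat(B')$, not equality as sublattices of $\Z^n$, and finite order of $\tau$ only says that the composite over a full period fixes the lattice, not that each individual $\tau$-step does. What actually carries the proof is the fallback you mention last: the paper computes, type by type, an explicit expression of $\beta'$ as $-\beta$ minus a nonnegative tropical integer combination of the rows $\alpha_i$ (and notes that in types $\mathbb{A}_{2k}$, $\mathbb{E}_6$, $\mathbb{E}_8$, $\mathbb{F}_4$ the lattice $\Lat(\underline{B})$ is all of $\Z^n$, so there is nothing to prove). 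Since you defer all of that work and your primary argument is invalid, the proposal does not establish the lemma.
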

\begin{proof}
By Corollaries 3.3(1) and 3.6(2) of \cite{CZ16b},
there exists $m\in \mathbb{Z}$ such that $\tau^m\zS=\zS'$. So it suffices to show the statement for $\zS'=\tau^{-1}\zS$.
 According to Lemma \ref{lem one frozen row}, it is sufficient to consider the case where the matrix $B$ has only one frozen row. Denote by $\alpha_1,\cdots,\alpha_n, \beta$ the rows of $B$, and denote by $\alpha_1,\cdots,\alpha_n, \beta'$ the rows of $B'$.  Let $\beta=(b_1,\cdots,b_n)$ and let $\beta'=(b'_1,\cdots,b'_n)$.  Then we have to show $\beta'\in \Lat(B)$ and $\beta\in \Lat(B')$.

 Without loss of generality, we assume the vertex $1$ is a source of $\underline{Q}$.
 We will distinguish several cases.

{\bf{Case I:}} Type $\mathbb{A}_n$. Suppose first that $n$ is even. We have
\[\underline{B}=\left(
      \begin{array}{ccccccc}
        0  & 1 & 0    &      &      &   &  \\
        -1 & 0 & -1   & 0     &      &   &  \\
         0  & 1 &  0& 1 &  \ddots    &   &  \\
           &  \ddots & \ddots & \ddots & \ddots & \ddots  &  \\
           &   &     \ddots & \ddots & \ddots & -1 & 0 \\
           &   &      &     \ddots & 1   & 0 & 1 \\
           &   &      &      &     0 & -1 & 0 \\
      \end{array}
    \right)
.\]
Let $e_i=(0,\ldots,0,1,0,\ldots,0)$ the $i$-th standard basis vector of $\mathbb{Z}^n$. Then
\[ e_i=\left\{
\begin{array}{cc}
-\za_{i+1}+\za_{i+3}-\za_{i+5}\cdots\pm\za_n &\textup{if $i$ is odd;}\\
-\za_{i-1}+\za_{i-3}-\za_{i-5}\cdots\pm\za_1 &\textup{if $i$ is even.}
\end{array}\right.\]
Therefore $\Lat(\underline{B})=\Z^n$. So $\Lat(B)=\Z^n=\Lat(B')$ and the result is true.

 Now suppose that $n$ is odd, say $n=2\ell+1\ge 3.$ Then
$$\underline{B}=\left(
      \begin{array}{ccccccc}
        0  & 1 & 0    &      &      &   &  \\
        -1 & 0 & -1   &      &      &   &  \\
           & 1 &  \ddots& \ddots &      &   &  \\
           &   & \ddots & \ddots & \ddots &   &  \\
           &   &      & \ddots & \ddots & 1 &  \\
           &   &      &      & -1   & 0 & -1 \\
           &   &      &      &      & 1 & 0 \\
      \end{array}
    \right)
.$$
A direct calculation shows that
\begin{equation*}\label{eq:type-A-1}
b'_{i}  =
\begin{cases}
-b_1+[b_2+[b_1]_{+}+[b_3]_{+}]_{+} & \text{$i = 1$;} \\ 
-b_i-[b_{i-1}]_{+}-[b_{i+1}]_{+} & \text{$i$ even;} \\
-b_i+[b_{i+1}+[b_i]_{+}+[b_{i+2}]_{+}]_{+} & \\
~~~~~+[b_{i-1}+[b_{i-2}]_{+}+[b_{i}]_{+}]_{+} & \text{$i$ odd and $i\neq 1, n$;} \\
-b_n+[b_{n-1}+[b_{n-2}]_{+}+[b_n]_{+}]_{+} & \text{$i = n$.}
\end{cases}
\end{equation*}
So \begin{equation*}
\label{eq:type-A-2}
\beta' = -\beta-\sum_{k=0}^\ell[b_{2k+1}]_{+}\alpha_{2k+1}-\sum_{k=1}^\ell[b_{2k}+[b_{2k-1}]_{+}+[b_{2k+1}]_{+}]_{+}\alpha_{2k}.
\end{equation*}
Therefore $\beta'\in \Lat(B)$ and $\beta\in \Lat(B')$.

{\bf{Case II:}} Type $\mathbb{B}_n$. In this case we assume that $\beta=(\frac{b_1}{2},\cdots,b_n)$ and $\beta'=(b'_1,\cdots,b'_n)$. If $n$ is odd, say $n=2\ell+1\ge 3$,  we have
$$\underline{B}=\left(
      \begin{array}{ccccccc}
        0  & 2 & 0    &      &      &   &  \\
        -1 & 0 & -1   &      &      &   &  \\
           & 1 & \ddots & \ddots &      &   &  \\
           &   & \ddots & \ddots & \ddots &   &  \\
           &   &      & \ddots & \ddots & 1 &  \\
           &   &      &      & -1   & 0 & -1 \\
           &   &      &      &      & 1 & 0 \\
      \end{array}
    \right)
.$$
Then
\begin{equation*}
\label{eq:type-B-1}
b'_{i}  =
\begin{cases}
-\frac{1}{2}b_1+[b_2+[b_1]_{+}+[b_3]_{+}]_{+} & \text{$i = 1$;} \\ 
-b_i-[b_{i-1}]_{+}-[b_{i+1}]_{+} & \text{$i$ even;} \\
-b_i+[b_{i+1}+[b_i]_{+}+[b_{i+2}]_{+}]_{+} & \\
~~~~~~+[b_{i-1}+[b_{i-2}]_{+}+[b_{i}]_{+}]_{+} & \text{$i$ odd and $i\neq 1, n$;} \\
-b_n+[b_{n-1}+[b_{n-2}]_{+}+[b_n]_{+}]_{+} & \text{$i = n$.}
\end{cases}
\end{equation*}
So
\begin{equation*}
\label{eq:type-B-2}
\beta' = -\beta-\frac{[b_1]_+}{2}\alpha_1-\left(\sum_{k=1}^\ell[b_{2k+1}]_{+}\alpha_{2k+1}+[b_{2k}+[b_{2k-1}]_{+}+[b_{2k+1}]_{+}]_{+}\alpha_{2k}\right).
\end{equation*}

If $n$ is even, say $n=2\ell$, we have
$$\underline{B}=\left(
      \begin{array}{ccccccc}
        0  & 2 & 0    &      &      &   &  \\
        -1 & 0 & -1   &      &      &   &  \\
           & 1 & \ddots & \ddots &      &   &  \\
           &   & \ddots & \ddots & \ddots &   &  \\
           &   &      & \ddots & \ddots & -1 &  \\
           &   &      &      & 1   & 0 & 1 \\
           &   &      &      &      & -1 & 0 \\
      \end{array}
    \right)
.$$
Then
\begin{equation*}
\label{eq:type-B-3}
b'_{i}  =
\begin{cases}
-\frac{1}{2}b_1+[b_2+[b_1]_{+}+[b_3]_{+}]_{+} & \text{$i = 1$;} \\ 
-b_i-[b_{i-1}]_{+}-[b_{i+1}]_{+} & \text{$i$ even;} \\
-b_i+[b_{i+1}+[b_i]_{+}+[b_{i+2}]_{+}]_{+} & \\
~~~~~~+[b_{i-1}+[b_{i-2}]_{+}+[b_{i}]_{+}]_{+} & \text{$i$ odd and $i\neq 1, n$;} \\
-b_n+[b_{n-1}]_+ & \text{$i = n$.}
\end{cases}
\end{equation*}
So \begin{equation*}
\label{eq:type-B-4}
\beta' = -\beta-\frac{[b_1]_+}{2}\alpha_1-\left(\sum_{k=1}^{\ell-1}[b_{2k+1}]_{+}\alpha_{2k+1}+[b_{2k}+[b_{2k-1}]_{+}+[b_{2k+1}]_{+}]_{+}\alpha_{2k}\right).
\end{equation*}

{\bf{Case II:}} Type $\mathbb{C}_n$. This case is similar to the proof of type $\mathbb{B}_n$.

{\bf{Case III:}} Type $D_n, n={2\ell+1}, \ell\geqslant 2$. We consider the exchange matrix
$$\underline{B}=\left(
      \begin{array}{ccccccccc}
        0  & 1 &  1  & 1 &      &      &      &   &  \\
        -1 & 0 &  0  & 0 &      &      &      &   &  \\
        -1 & 0 &  0  & 0 &      &      &      &   &  \\
        -1 & 0 &  0  & 0 & -1   &      &      &   &  \\
           &   &     & 1 & \ddots & \ddots &      &   &  \\
           &   &     &   & \ddots & \ddots & \ddots &   &  \\
           &   &     &   &      & \ddots & \ddots & 1 &  \\
           &   &     &   &      &      & -1   & 0 & -1 \\
           &   &     &   &      &      &      & 1 & 0 \\
      \end{array}
    \right)
.$$
Then we have
\begin{equation*}
\label{eq:type-D-1}
b'_{i}  =
\begin{cases}
-b_1+[[b_2]_++[b_1]_{+}]_{+}+[[b_3]_++[b_1]_{+}]_{+} &  \\ 
~~~~~~+[b_4+[b_1]_++[b_5]_{+}]_{+} & \text{$i = 1$;} \\
-b_i-[b_1]_+                       & \text{$i = 2,3$;} \\
-b_i-[b_{i-1}]_{+}-[b_{i+1}]_{+}   & \text{$i$ even and $i\geqslant 4$;} \\
-b_i+[b_{i+1}+[b_i]_{+}+[b_{i+2}]_{+}]_{+} & \\
~~~~~~+[b_{i-1}+[b_{i-2}]_{+}+[b_{i}]_{+}]_{+} & \text{$i$ odd and $5\leqslant i \leqslant n-1$;} \\
-b_n+[b_{n-1}+[b_{n-2}]_{+}+[b_{n}]_{+}]_{+} & \text{$i = n$.}
\end{cases}
\end{equation*}
So \begin{equation*}
\label{eq:type-D-2}
\begin{aligned}
\beta' =& -\beta-[b_1]_+\alpha_1-[b_2+[b_1]_+]_+\alpha_2-[b_3+[b_1]_+]_+\alpha_3\\
& -\left(\sum_{k=2}^\ell[b_{2k+1}]_{+}\alpha_{2k+1}+\sum_{k=2}^\ell[b_{2k}+[b_{2k-1}]_{+}+[b_{2k+1}]_{+}]_{+}\alpha_{2k}\right).
\end{aligned}
\end{equation*}

{\bf{Case IV:}} Type $E_7$. We consider the exchange matrix
$$\underline{B}=\left(
      \begin{array}{ccccccc}
        0  & 1 & 1    &  1    &  0    & 0  & 0 \\
        -1 & 0 & 0   &  0    &   -1   &  0 & 0 \\
         -1  & 0 & 0 & 0 &  0    & 0  & 0 \\
          -1 &  0 & 0 & 0 & 0 & -1  & 0 \\
          0 & 1  &  0    & 0 & 0 & 0 & 0 \\
          0 &  0 &    0  &    1  & 0   & 0 & 1 \\
           0 & 0  &  0    &  0    &  0    & -1 & 0 \\
      \end{array}
    \right)
.$$
Then we have
\begin{equation*}
\label{eq:type-D-1a}
\begin{cases}
b_1= -b_1+[b_2+[b_1]_+[b_5]_+]_++[b_3+[b_1]_+]_+ +[b_4+[b_1]_++[b_6]_+]_+\\
b_2=-(b_2+[b_1]_++[b_5]_+) \\
b_3=-(b_3+[b_1]_+) \\
b_4=-(b_4+[b_1]_++[b_6]_+) \\
b_5=-b_5+[b_2+[b_1]_++[b_5]_+]_+ \\
b_6=-b_6+[b_4+[b_1]_++[b_6]_+]_++[b_7+[b_6]_+]_+ \\
b_7=-(b_7+[b_6]_+)
\end{cases}
\end{equation*}
So \begin{equation*}
\label{eq:type-D-2a}
\begin{aligned}
\beta' =& -\beta-[b_1]_+\alpha_1-[b_2+[b_1]_++[b_5]_+]_+\alpha_2-[b_3+[b_1]_+]_+\alpha_3\\
& -[b_4+[b_1]_++[b_6]_+]_+\alpha_4-[b_6]_+\alpha_6-[b_7+[b_6]_+]_+\alpha_7.
\end{aligned}
\end{equation*}

{\bf{Case V:}} For types $E_6$, $E_8$ and $F_4$, the lattice $\Lat(\underline{B})$ is $\Z^6$, $\Z^8$ and $\Z^4$ respectively, so the result is true by the same argument as for type $\mathbb{A}_n$ with $n $ even.

{\bf{Case VI:}} The type $G_2$ is a special case of Theorem \ref{thm:main-3}.
\end{proof}

\smallskip
We are now ready for the main result of this section.
\begin{theorem}\label{thm:main-2} Let $\A$ be a cluster algebra of finite type
 with arbitrary coefficients and assume that $\A$ is not of type $\mathbb{D}_{2\ell}$.
Then \[\QAut_0(\A)\ \cong\  \Aut^+(\A_{triv}).\]

Moreover, if $\A$ is of type $\mathbb{D}_{2\ell}$ then
\[G\rtimes \QAut_0(\A) \ \cong\  \Aut^+(\A_{triv}),\]
where $G$ is a subgroup of the symmetric group $S_3$, in type $\mathbb{D}_4$, and $G$ is a subgroup of $\mathbb{Z}_2=\mathbb{Z}/2\mathbb{Z}$, in types  $\mathbb{D}_{2\ell}$ with $2\ell\ge 6$.
\end{theorem}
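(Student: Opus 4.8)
The plan is to deduce everything from the injective homomorphism $h\colon\QAut_0(\A)\monoto\Aut^+(\A_{triv})$ of Proposition~\ref{prop:main-2}, which applies because every finite type cluster algebra satisfies condition $(\star 1)$ of Lemma~\ref{lem:dependence-exchange-graph}. The task is then to measure the cokernel of $h$, and the two tools for this are Remark~\ref{rem:quasi-automorphism} --- a quasi-automorphism $\widetilde\Psi$ with $\widetilde\Psi(\S)\asymp\S'$ exists exactly when $\underline B=\underline{B'}$ and $\Lat(B)=\Lat(B')$ --- and Lemma~\ref{lem:finite-type}, which guarantees $\Lat(B)=\Lat(B')$ for any two bipartite seeds of $\A$ with a common principal part outside type $\mathbb D_{2\ell}$.

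For the first statement I would show $h$ is onto. Let $\Psi\in\Aut^+(\A_{triv})$ and fix a bipartite seed $\underline\S$ of $\A_{triv}$, which exists since a Dynkin diagram is a tree and hence admits a bipartite orientation. Writing $\underline\S'=\Psi(\underline\S)$ with the labelling $x'_j:=\Psi(x_j)$, the fact that $\Psi$ is a \emph{direct} cluster automorphism forces, through condition (iii) of Definition~\ref{defn:cluster-automorphisms}, the exchange matrix of $\underline\S'$ in this labelling to equal $\underline B$; in particular $\underline\S'$ is again bipartite with $\underline{B'}=\underline B$. Since $\A$ and $\A_{triv}$ share an exchange graph by Lemma~\ref{lem:dependence-exchange-graph}, the seeds $\underline\S,\underline\S'$ lift uniquely to seeds $\S,\S'$ of $\A$ with $\underline B=\underline{B'}$, still both bipartite. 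Lemma~\ref{lem:finite-type} then yields $\Lat(B)=\Lat(B')$, and Remark~\ref{rem:quasi-automorphism} produces a quasi-automorphism $\widetilde\Psi$ with $\widetilde\Psi(\S)\asymp\S'$. Specializing all coefficients to $1$ collapses the proportionality, so $h(\widetilde\Psi)$ sends each $x_j$ to $x'_j=\Psi(x_j)$; as a direct cluster automorphism is determined by its values on a single cluster, $h(\widetilde\Psi)=\Psi$, and $h$ is an isomorphism.

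For the second statement I would run the same procedure in type $\mathbb D_{2\ell}$, where the step invoking Lemma~\ref{lem:finite-type} is unavailable precisely because its conclusion fails: the leaf (ear) symmetries carry a bipartite seed to a bipartite seed with the same principal part but a strictly different row lattice, which is exactly the failure of row-span containment exhibited in Example~\ref{exm:counter-exm2}. I expect the structure $\Aut^+(\A_{triv})\cong\langle\tau\rangle\rtimes D$ from \cite{ASS12,FZ03a}, with $D=S_3$ for $\mathbb D_4$ and $D=\Z_2$ for $2\ell\ge 6$; the cyclic factor $\langle\tau\rangle$ lies in $\mathrm{im}(h)$ by a direct verification that the AR-translation preserves $\Lat(B)$ (the $\tau$-part of the computation in Lemma~\ref{lem:finite-type}, which survives into type $\mathbb D_{2\ell}$), whereas an ear symmetry lies in $\mathrm{im}(h)$ if and only if the induced permutation of the frozen rows fixes $\Lat(B)$. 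Computing $D\cap\mathrm{im}(h)$ from the explicit $\mathbb D_{2\ell}$ exchange matrix and choosing a complement $G$ to it inside $D$, so that $G\subseteq S_3$ for $\mathbb D_4$ and $G\subseteq\Z_2$ for $2\ell\ge 6$, one obtains $\Aut^+(\A_{triv})\cong G\rtimes\QAut_0(\A)$, with $|G|$ dictated by the coefficient system as recorded in Table~\ref{table}.

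The main obstacle in both parts is the lattice bookkeeping that decides whether a given combinatorial symmetry preserves $\Lat(B)$. In Part~(1) this is entirely absorbed by Lemma~\ref{lem:finite-type}, so only the soft observations that $\Psi$ preserves the bipartite principal part and that $h(\widetilde\Psi)=\Psi$ remain. In Part~(2) it is the crux: one must compute, from the $\mathbb D_{2\ell}$ exchange matrix, exactly which ear symmetries fix the row lattice --- isolating the subgroup that does --- then verify that $\mathrm{im}(h)$ is normal with complement $G$ so that the extension splits, and finally match $G$ against the subgroups tabulated in Table~\ref{table}.
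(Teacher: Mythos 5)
Your proposal follows essentially the same route as the paper: injectivity from Proposition~\ref{prop:main-2}; surjectivity outside type $\mathbb{D}_{2\ell}$ by applying Lemma~\ref{lem:finite-type} to a bipartite seed and its image and invoking Remark~\ref{rem:quasi-automorphism} (your step filling in that the lifted quasi-automorphism specializes back to the given automorphism is exactly what the paper's one-line citation of these two results is compressing); and in type $\mathbb{D}_{2\ell}$ the lift of $\tau$ combined with the description of $\Aut^+(\A_{triv})$ from \cite{ASS12}, which is in fact a direct product $\Z_4\times S_3$, resp.\ $\Z_n\times\Z_2$, rather than merely a semidirect one, so that the sandwich $\langle\tau\rangle\subseteq\QAut_0(\A)\subseteq\Aut^+(\A_{triv})$ immediately gives the stated splitting. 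The only point the paper adds is a separate one-line direct computation for type $\mathbb{A}_1$, which is needed because the bipartite/$\tau$ machinery (and the paper's standing assumption $n>1$) does not cover that degenerate case.
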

\begin{proof} For the type $\mathbb{A}_1$ a direct computation shows that $\QAut(\A)\cong \mathbb{Z}_2\cong\Aut^+(\A)$. Assume now that $n>1$.
Proposition \ref{prop:main-2} yields an injective homomorphism $\QAut_0(\A)\monoto  \Aut^+(\A_{triv})$. By  Lemma \ref{lem:finite-type} and Remark \ref{rem:quasi-automorphism} this is an isomorphism unless $\A$ is of type $\mathbb{D}_{2\ell}$.

In type $\mathbb{D}_{2\ell}$  a similar calculation shows that the Auslander-Reiten translation $\tau$ can again be lifted as a quasi-automorphism of $\A$. However, there may be additional automorphisms of $\A_{triv}$ which are not induced by some power of $\tau^{m}$, as we have seen in Example \ref{exm:counter-exm2}. So we have $\langle\tau\rangle\subseteq \QAut_0(\A)\subseteq \Aut(\A_{triv})$.
By Table 1 in \cite{ASS12}, we know
\[\Aut(\A_{triv})\cong \left\{\begin{array}{ll} \mathbb{Z}_4\times S_3 &\textup{in type $\mathbb{D}_4$};\\
\mathbb{Z}_n\times \mathbb{Z}_2 &\textup{in types $\mathbb{D}_n$ with $n>4$},\end{array}\right.\]
 where the cyclic part is generated by the Auslander-Reiten translation $\tau$. This completes the proof.
\end{proof}
\begin{example}
For the cluster algebra of Example \ref{exm:counter-exm2} we have \[\QAut_0(\A)\cong\Z_4\times\Z_2\cong\langle\tau\rangle\times\langle\sigma\rangle,\] where $\tau$ is the Auslander-Reiten translation and $\sigma$ fixes $x_1$ and $x_3$ and interchanges $x_2$ and $x_4$. On the other hand, $\Aut^+(\A_{triv})\cong\Z_4\times S_3$, where the symmetric group $S_3$ is given by all permutations of the variables $x_2,x_3,x_4$. Since the symmetric group is a semidirect product of $\Z_3$ and $\Z_2$ we have $\Z_3\rtimes\QAut_0(\A)\cong\Aut^+(\A_{triv})$.
\end{example}

 As an immediate corollary, we obtain the complete list of all quasi automorphism groups of finite type cluster algebras.

\begin{corollary}\label{cor:finite-type-1}
Let $\A$ be a cluster algebra of finite type. Then the quasi-automorphism group $\QAut_0(\A)$ is listed in Table \ref{table}. Note that the group is a cyclic group, except for type $\mathbb{D}$. \end{corollary}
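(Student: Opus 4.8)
The plan is to read the table off directly from Theorem \ref{thm:main-2} together with the computation of the direct cluster automorphism groups $\Aut^+(\A_{triv})$ in finite type carried out in \cite{ASS12}. First I would dispose of all types other than $\mathbb{D}_{2\ell}$: here Theorem \ref{thm:main-2} gives $\QAut_0(\A)\cong\Aut^+(\A_{triv})$, so the rows for $\mathbb{A}_n$, $\mathbb{B}_n$, $\mathbb{C}_n$, $\mathbb{D}_{2\ell+1}$, $\mathbb{E}_6$, $\mathbb{E}_7$, $\mathbb{E}_8$, $\mathbb{F}_4$ and $\mathbb{G}_2$ are obtained verbatim from \cite{ASS12}. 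In each such case the group is cyclic, generated by the class of the Auslander--Reiten translation $\tau=\tau_+\tau_-$, whose order is $h+2$ when the longest Weyl group element satisfies $w_0\neq-1$ and $(h+2)/2$ when $w_0=-1$; for $\mathbb{D}_{2\ell+1}$ one also notes that the extra diagram automorphism contributes a $\Z_2$ coprime to the $\Z_{2\ell+1}$ generated by $\tau$, so that the product $\Z_{2\ell+1}\times\Z_2$ is again cyclic. This settles every row except type $\mathbb{D}_{2\ell}$ and accounts for the cyclicity assertion outside type $\mathbb{D}$.

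For type $\mathbb{D}_{2\ell}$ I would invoke the second statement of Theorem \ref{thm:main-2}, namely $G\rtimes\QAut_0(\A)\cong\Aut^+(\A_{triv})$, together with the explicit values $\Aut^+(\A_{triv})\cong\Z_4\times S_3$ for $\mathbb{D}_4$ and $\Aut^+(\A_{triv})\cong\Z_{2\ell}\times\Z_2$ for $2\ell\geq 6$ from \cite{ASS12}, in which the cyclic factor is generated by $\tau$. As recorded in the proof of Theorem \ref{thm:main-2}, the Auslander--Reiten translation $\tau$ always lifts to a quasi-automorphism of $\A$, since the lattice condition $\Lat(B)=\Lat(\tau(B))$ of Remark \ref{rem:quasi-automorphism} holds independently of the chosen coefficients; hence $\langle\tau\rangle\cong\Z_{2\ell}$ is a direct factor of $\QAut_0(\A)$. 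Writing the remaining diagram-automorphism factor ($S_3$ or $\Z_2$) as $G\rtimes G'$, where $G'$ consists of those diagram automorphisms that do lift to quasi-automorphisms and $G$ of those that do not, I obtain $\QAut_0(\A)\cong\Z_{2\ell}\times G'$ with $G'$ a subgroup of $S_3$ in type $\mathbb{D}_4$ and a subgroup of $\Z_2$ for $2\ell\geq 6$. This is precisely the form of the $\mathbb{D}$ rows of Table \ref{table}.

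The only genuinely nontrivial point, and where I expect the real content to sit, is that in type $\mathbb{D}_{2\ell}$ the liftable subgroup $G'$ depends on the coefficient system rather than being a single fixed group. Example \ref{exm:counter-exm2} exhibits a $\mathbb{D}_4$ coefficient system for which the transposition of two leaves of the Dynkin diagram fails to lift, because the lattice equality $\Lat(B)=\Lat(B')$ of Remark \ref{rem:quasi-automorphism} breaks down, whereas for trivial coefficients all of $S_3$ is realized. Consequently one can only assert that $G'$ ranges over the subgroups of $S_3$ (respectively $\Z_2$), which is exactly why the table records ``$G$ a subgroup of $S_3$'' and ``$H$ a subgroup of $\Z_2$'' rather than a definite group. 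Finally, since $\Z_4\times G'$ and $\Z_{2\ell}\times G'$ fail to be cyclic as soon as $G'$ is nontrivial (as $\gcd(2\ell,2)=2$), type $\mathbb{D}_{2\ell}$ is precisely where cyclicity can break down, which completes the verification of Table \ref{table} and of the concluding remark.
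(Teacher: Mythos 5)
Your proof is correct and takes essentially the same route as the paper, whose entire argument for this corollary is to combine Theorem \ref{thm:main-2} with the computation of $\Aut^+(\A_{triv})$ in Table 1 of \cite{ASS12}, exactly as you do (including reading the $\mathbb{D}_{2\ell}$ rows off the second half of Theorem \ref{thm:main-2}, using that $\tau$ lifts so that the image of $\QAut_0(\A)$ in $\Z_{2\ell}\times S_3$, resp.\ $\Z_{2\ell}\times\Z_2$, contains the cyclic factor and is therefore of the form $\Z_{2\ell}\times G'$). The extra details you supply are consistent with this but contain two harmless slips: your two statements about the order of $\tau$ in type $\mathbb{D}_{2\ell+1}$ (order $h+2=4\ell+2$ versus generating $\Z_{2\ell+1}$) contradict each other, and in type $\mathbb{D}_4$ a hypothetical liftable subgroup $G'\cong\Z_3$ would give the cyclic group $\Z_4\times\Z_3\cong\Z_{12}$, so non-cyclicity requires $G'$ to contain an involution rather than merely being nontrivial --- neither slip affects the table, which is read off from \cite{ASS12} in any case.
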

\begin{proof}
This follows from Theorem \ref{thm:main-2}, Table 1  in \cite{ASS12}.
\end{proof}

 We close this section with the following result on lattices which is a consequence of our arguments above.

\begin{corollary}\label{cor:finite-type-2} Let $\A$ be a cluster algebra of finite type and let $\zS, \zS'$
be two seeds such that the principal parts of the exchange matrices are equal.
Then the lattices of the exchange matrices coincide.
\end{corollary}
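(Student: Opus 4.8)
The plan is to reduce the statement to the bipartite case already settled in Lemma~\ref{lem:finite-type} and in the proof of Theorem~\ref{thm:main-2}. Throughout I read ``two seeds of $\A$'' as seeds $\zS=\zS_t$ and $\zS'=\zS_{t'}$ occurring in the cluster pattern of $\A$, i.e.\ mutation-equivalent to one another; this is the relevant notion here, since the relabelled seed produced by the outer Dynkin symmetry in Example~\ref{exm:counter-exm2} does \emph{not} lie in the pattern of $\A$, precisely because that symmetry fails to lift to a quasi-automorphism (it fixes the principal part while changing the lattice).

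First I would invoke Lemma~\ref{lem one frozen row} to reduce to a single frozen row, writing $B=\left(\begin{smallmatrix}\underline{B}\\ \beta\end{smallmatrix}\right)$ and $B'=\left(\begin{smallmatrix}\underline{B}\\ \beta'\end{smallmatrix}\right)$, so the claim becomes $\beta\in\Lat(B')$ and $\beta'\in\Lat(B)$. Next I would show that the property ``$\underline{B}=\underline{B'}$ and $\Lat(B)=\Lat(B')$'' is invariant under applying the same mutation $\mu_k$ to both seeds. The principal part mutates by a rule depending only on $\underline{B}$, so $\underline{B}=\underline{B'}$ is preserved. For the lattice, Remark~\ref{rem:quasi-automorphism} shows that $\Lat(B)=\Lat(B')$ is equivalent to the existence of a quasi-automorphism $\Psi$ with $\Psi(\zS)\asymp\zS'$; since a quasi-automorphism sends every seed to a proportional one and commutes with mutation \cite{F16} (cf.\ the Remark following Definition~\ref{defn:quasi-homomorphism}), we obtain $\Psi(\mu_k\zS)\asymp\mu_k\zS'$, and Remark~\ref{rem:quasi-automorphism} again yields $\Lat(\mu_k B)=\Lat(\mu_k B')$. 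As mutation is an involution, this equivalence runs in both directions. Because $\A$ is of finite type I may pick a mutation sequence taking $\underline{B}$ to a bipartite matrix and apply it to both seeds, reducing the whole statement to the case where $\zS$ and $\zS'$ are bipartite with $\underline{B}=\underline{B'}$.

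For bipartite seeds with equal principal part I would use Corollaries~3.3(1) and~3.6(2) of \cite{CZ16b}, exactly as in the proof of Lemma~\ref{lem:finite-type}, to produce $m\in\Z$ with $\zS'=\tau^m\zS$, where $\tau=\tau_+\tau_-$ is the Auslander--Reiten translation; it then suffices to treat $\zS'=\tau^{-1}\zS$. If $\A$ is not of type $\mathbb{D}_{2\ell}$, this is precisely Lemma~\ref{lem:finite-type}, whose explicit row computations give $\Lat(B)=\Lat(B')$. If $\A$ is of type $\mathbb{D}_{2\ell}$, Lemma~\ref{lem:finite-type} does not apply, but the proof of Theorem~\ref{thm:main-2} shows that $\tau$ still lifts to a quasi-automorphism $\Psi$ of $\A$; hence $\Psi(\zS)\asymp\tau^{-1}\zS=\zS'$, and Remark~\ref{rem:quasi-automorphism} again gives $\Lat(B)=\Lat(B')$. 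Iterating over $m$ and unwinding the reduction of the previous paragraph completes the argument.

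The main obstacle is type $\mathbb{D}_{2\ell}$, where the explicit lattice identities of Lemma~\ref{lem:finite-type} genuinely fail: by Example~\ref{exm:counter-exm2} the outer symmetry of the Dynkin diagram fixes the principal part yet changes the lattice. The delicate point to get right is therefore that such a symmetry does not relate two seeds \emph{of the same cluster pattern} of $\A$: within the pattern the only seeds sharing a bipartite principal part are the $\tau$-translates, and $\tau$ does lift. Thus the seeds that would violate the statement are relabellings lying outside the pattern of $\A$, which is exactly why they obstruct the isomorphism $\QAut_0(\A)\cong\Aut^+(\A_{triv})$ in Theorem~\ref{thm:main-2} without contradicting the present corollary.
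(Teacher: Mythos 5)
Your preliminary reductions are sound: passing to a single frozen row via Lemma~\ref{lem one frozen row}, transporting the property ``$\underline{B}=\underline{B'}$ and $\Lat(B)=\Lat(B')$'' along mutations via Remark~\ref{rem:quasi-automorphism} together with the fact that a quasi-homomorphism sends \emph{every} seed to a seed proportional to one of the target algebra, and reducing to bipartite seeds. Outside type $\mathbb{D}_{2\ell}$ your argument is essentially an expanded version of the paper's one-line proof, which runs: two seeds with equal principal parts induce a direct cluster automorphism of $\A_{triv}$, Theorem~\ref{thm:main-2} lifts it to a quasi-automorphism of $\A$, and Remark~\ref{rem:quasi-automorphism} converts the existence of that lift into $\Lat(B)=\Lat(B')$.

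The genuine gap is your treatment of type $\mathbb{D}_{2\ell}$, and it cannot be repaired. Your key claim --- that the relabelled seed $\S'$ of Example~\ref{exm:counter-exm2} does not lie in the cluster pattern of $\A$, and more generally that within the pattern the only seeds sharing a bipartite principal part are the $\tau$-translates --- is false. Whenever $b_{ij}b_{ji}=-1$ at a seed, the rank-two sub-pattern in directions $i,j$ (freeze all other mutable variables) has principal part of type $\mathbb{A}_2$, so by condition $(\star 1)$ of Lemma~\ref{lem:dependence-exchange-graph} its exchange graph is a pentagon; consequently the five-step sequence $\mu_i,\mu_j,\mu_i,\mu_j,\mu_i$ returns the \emph{labeled} seed with indices $i$ and $j$ transposed, frozen rows included. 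In Example~\ref{exm:counter-exm2}, mutating $\S$ at $4,2,1$ (in this temporal order) yields a seed with $b_{23}=1$, $b_{32}=-1$ and frozen row $(0,-1,0,0)$; the pentagon $\mu_2,\mu_3,\mu_2,\mu_3,\mu_2$ then produces the $(23)$-relabelling of that seed, and undoing the first three mutations (now at $1,3,4$, since mutation is equivariant under simultaneous relabelling) lands exactly on $\S'$, with frozen row $(0,0,1,0)$. A direct check of these eleven mutations confirms this. Hence $\S$ and $\S'$ are both seeds of the same cluster pattern, have equal principal parts, and have $\Lat(B)\neq\Lat(B')$: the statement you are trying to prove is actually \emph{false} in type $\mathbb{D}_{2\ell}$, so no argument can close the gap, and in particular the CZ16b-based assertion you borrow from Lemma~\ref{lem:finite-type} genuinely requires the $\mathbb{D}_{2\ell}$ exclusion. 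The corollary is correct precisely for the finite types in which Theorem~\ref{thm:main-2} gives $\QAut_0(\A)\cong\Aut^+(\A_{triv})$ (which is all the paper's proof delivers); the group $G$ appearing there in type $\mathbb{D}_{2\ell}$ measures exactly the failure of both the lifting argument and the statement itself.
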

\begin{proof}
This follows from Remark \ref{rem:quasi-automorphism} and Theorem \ref{thm:main-2}.
\end{proof}

\section{Affine type}\label{section:affine type}
In this section we consider the quasi-automorphism groups for a cluster algebra $\A$ of affine type, that is, the cluster algebras of types $\widetilde{\mathbb{A}}_{p,q},(p\geqslant 1, q\geqslant 1)$, $\widetilde{\mathbb{D}}_{n-1},(n\geqslant 5)$, $\widetilde{\mathbb{E}}_{6}$, $\widetilde{\mathbb{E}}_{7}$ and $\widetilde{\mathbb{E}}_{8}$. We proceed case by case.

{\bf{Case I:}} Let $\A$ be a cluster algebra of type $\widetilde{\mathbb{A}}_{p,q}$. Let $\S=(\widetilde\x,\p,B)$
be a seed of $\A$, where the quiver of the exchange matrix $\underline{B}$ is as follows:

\[\xymatrix{&2\ar[r]&3\ar[r]&\cdots\ar[r]&p\ar[rd]\\
1\ar[ur]\ar[dr]&&&&&p+1 .\\
&p+q\ar[r]&p+q-1\ar[r]&\cdots\ar[r]&p+2\ar[ur]
}
\]
The cluster automorphism group of $\A_{triv}$ is computed in \cite{ASS12}.
Denote by $\S_{triv}=(\x=(x_1,\cdots,x_{p+q}),\underline{B})$ the initial seed of $\A_{triv}$, which corresponds to $\S$.
Let $\S'_{triv}=(\x'=(x'_1,\cdots,x'_{p+q}),\underline{B}')$ be a seed obtained from $\S_{triv}$ by an ordered sequence of mutations at $x_1$, $x_{p+q}$, $x_{p+q-1}$, $\cdots$ $x_{p+2}$. Let $\S''_{triv}=(\x''=(x''_1,\cdots,x''_{p+q}),\underline{B}'')$ be a seed obtained from $\S_{triv}$ by an ordered sequence of mutations at $x_1, x_{2}, \cdots x_{p}$. Then there are two cluster automorphisms $r_1$ and $r_2$ of $\A_{triv}$ which map $\S_{triv}$ to $\S'_{triv}$ and $\S''_{triv}$ respectively, where the actions on the initial cluster variables are as follows:
\begin{equation*}
\label{map:r-1}
r_1:
\begin{cases}
x_{i} \mapsto x'_{i+1} & \text{if $1\leqslant i\leqslant p+q-1$}; \\ 
x_{p+q} \mapsto x'_{1}.
\end{cases}
\end{equation*}

\begin{equation*}
\label{map:r-2}
r_2:
\begin{cases}
x_{1} \mapsto x''_{p+q}; \\ 
x_{i} \mapsto x''_{i-1} & \text{if $2\leqslant i\leqslant p+q.$}
\end{cases}
\end{equation*}
In fact if $p\neq q$, then
\begin{equation}\label{Hpq}\Aut^+(\A_{triv})\cong H_{p,q} = \langle  r_1, r_2| r_1r_2 = r_2 r_1, r_1^p=r_2^q \rangle,\end{equation} see  \cite[section 3.3]{ASS12}. If $p=q$, then $\Aut^+(\A_{triv})\cong H_{p,q}\rtimes \{1,\sigma\}$, where $\sigma$ is induced by the following permutation of the initial cluster variables:
\begin{equation*}
\label{map:sigma}
\sigma:
\begin{cases}
x_{1} \leftrightarrow x_{1}; \\ 
x_{p+1} \leftrightarrow x_{p+1};\\
x_{i} \leftrightarrow x_{p+q+2-i} & \text{if $2\leqslant i\leqslant p=q.$}
\end{cases}
\end{equation*}

Now we show the $r_1$ can be lifted as a quasi-automorphism of $\A$. We shall consider only the case where $p$ is even and $q$ is odd, since the other cases are similar. According to Lemma \ref{lem one frozen row}, we may also assume that $\A$ only has one coefficient. Let
$B=\left(
 \begin{smallmatrix}
 \underline{B} \\
 \beta \\
 \end{smallmatrix}
 \right)$
be the initial matrix of $\A$, where $\beta=(b_1,\ldots,b_{p+q})$.
Let
$B'=\left(
 \begin{smallmatrix}
 \underline{B} \\
 \beta' \\
 \end{smallmatrix}
 \right)$
be the matrix of a labeled cluster which is obtained from the labeled cluster $\widetilde\x'=\mu_{p+2}\cdots\mu_{p+q}\mu_1(\widetilde\x)$ after a relabeling $1\mapsto p+q\mapsto p+q-1\mapsto\cdots \mapsto 2\mapsto1$.
Denote the frozen row of $B'$ by $\beta'=(b'_1,\cdots,b'_{p+q})$. The cluster automorphism $r_1$ maps $\x$ to $\x'$. Note that after specializing the coefficient in $\widetilde\x'$ to $1$, we get $\x'$.
So to show the existence of a lift of $r_1$ in $\QAut_0(\A)$ which maps $\widetilde\x$ to $\widetilde\x'$
 After a direct calculation, we have
\begin{equation*}
\label{beta'-affine-type-A}
\begin{cases}

b'_{1}=b_{2}+[b_{1}]_+ & \\

b'_{i}=b_{i+1} & \text{if $2\leqslant i\leqslant p-1$}; \\

b'_{p}=\{b_{p+1}\} &  \\

b'_{p+1}=-\{b_{p+2}\} &  \\

b'_{p+i}=-\{b_{p+i+1}\}+[\{b_{p+i}\}]_+ & \text{if $2\leqslant i\leqslant q-1$}; \\

b'_{p+q}=-b_{1}+[\{b_{p+q}\}]_+ &
\end{cases}
\end{equation*}
where $\{b_{p+i}\}, 1\leqslant i\leqslant q$, is inductively defined by
\begin{equation*}
\label{beta'-affine-type-Aa}
\{b_{p+i}\}=\begin{cases}

b_{p+q}+[b_1]_+ &  \text{if $i=q$};\\

b_{p+i}+[\{b_{p+i+1}\}]_+ & \text{if $1\leqslant i\leqslant q-1$}.
\end{cases}
\end{equation*}
Finally, we have
$$\beta'=-\beta+a_1\alpha_1+\cdots+a_{p+q-2}\alpha_{p+q-2}+a_{p+q}\alpha_{a+q} ,$$
where the $\alpha$ are the rows of $\underline{B}$ and

\begin{equation*}
\label{map:linear coefficients}
\begin{cases}

a_{1}=b_{p+q}+b'_{p+q}; \\

a_{2k}=-\sum\limits_{i=1}^k (b_{2i-1}+b_{2i-1}')+\sum\limits_{i=1}^{p/2} (b_{2i}+b'_{2i})-\sum\limits_{i=1}^{(q-1)/2} (b_{p+2i}+b'_{p+2i})-b_{p+q}-b'_{p+q} \\
~~~~~~~~~~~~~~~~~~~~~~~~~~~~~~~~~~~~~~~~~~~~~~~~~~~~~~~~~~~~~~~~~~~~~~~~~~~~~~~~~~
~~~~~~~~~~~~~~~~~~~~~~~~~ \text{if $1\leqslant k\leqslant p/2$}; \\

a_{2k+1}=-\sum\limits_{i=1}^k (b_{2i}+b'_{2i})+b_{p+q}+b'_{p+q}
~~~~~~~~~~~~~~~~~~~~~~~~~~~~~~~~~~~~~~~~~~~~~~~~\text{if $1\leqslant k\leqslant p/2$};\\

a_{p+2k}=\sum\limits_{i=1}^k (b_{p+2i-1}+b'_{p+2i-1})-\sum\limits_{i=1}^{p/2} (b_{2i}+b'_{2i})
+\sum\limits_{i=1}^{(q-1)/2} (b_{p+2i}+b'_{p+2i})+b_{p+q}+b'_{p+q}\\
~~~~~~~~~~~~~~~~~~~~~~~~~~~~~~~~~~~~~~~~~~~~~~~~~~~~~~~~~~~~~~~~~~~~~~~~~~~~~~~~~~
~~~~~~~~~~~~~~~~~  \text{if $1\leqslant k\leqslant (q-1)/2$};\\

a_{p+2k+1}=\sum\limits_{i=1}^k (b_{p+2i}+b'_{p+2i})-\sum\limits_{i=1}^{p/2}(b_{2i}+b'_{2i})
+b_{p+q}+b'_{p+q}\\
~~~~~~~~~~~~~~~~~~~~~~~~~~~~~~~~~~~~~~~~~~~~~~~~~~~~~~~~~~~~~~~~~~~~~~~~~~~~~~~~~~~~~~~~~~~~~
~~~~~~\text{if $1\leqslant k\leqslant (q-1)/2$}.
\end{cases}
\end{equation*}
Similarly,  the cluster automorphism $r_2$ can also be lifted to $\QAut_0(\A)$.
 Therefore
 \[\begin{array}{rcccll}
 \QAut_0(\A)&\cong& H_{p,q}&\cong &\Aut^+(\A_{triv}) &\textup{if $p\neq q$;}\\
 \QAut_0(\A)&\cong& H_{p,q}\rtimes G&\subseteq &\Aut^+(\A_{triv}) &\textup{if $p=q$},
 \end{array}\]  where $G$ is a subgroup of $\Z_2$.

{\bf{Case II:}} Let $\A$ be a cluster algebra of type $\widetilde{\mathbb{D}}_{n-1}$. Let $\S=({\widetilde\x},\p,B)$ be a seed of $\A$, where the quiver of the exchange matrix $\underline{B}$ is

\begin{equation*}
\xymatrix@R10pt{
2&&&&&&n-1\\
&1\ar[lu]\ar[ld]\ar[r]&4&\ar[l]\quad\cdots\quad\ar[r]&n-3&\ar[l]n-2\ar[ru]\ar[rd]&&\text{(n odd),}\\
3&&&&&&n}
\end{equation*}

\begin{equation*}
\xymatrix@R10pt{
2&&&&&&n-1\ar[ld]\\
&1\ar[lu]\ar[ld]\ar[r]&4&\ar[l]\quad\cdots\quad&\ar[l]n-3\ar[r]&n-2&&\text{(n even).}\\
3&&&&&&n\ar[lu]}
\end{equation*}
It was shown in \cite{ASS12} that the direct cluster automorphism group $\Aut^+(\A_{triv})$ of $\A_{triv}$ is isomorphic to

\begin{equation}\label{G}
G=\left\langle \tau,\sigma,\rho_1,\rho_n \left| \begin{array}{c}
 \rho_i^2=1 ,\tau \rho_i=\rho_i \tau \ (i=1,n)\\
\tau\sigma=\sigma\tau,\ \sigma^2=\tau^{n-3} \\
\rho_1\sigma=\sigma\rho_n, \ \sigma\rho_1=\rho_n\sigma
\end{array}\right.\right\rangle.
\end{equation}
Here $\tau$ corresponds to the Auslander-Reiten translation of the cluster category. The automorphism $\sigma$ corresponds to a kind of reflection of the cluster category, and $\rho_1$ (respectively $\rho_2$) is induced by the automorphism of the initial quiver which permutes the vertices $2$ and $3$ (respectively $n-1$ and $n$) and preserves other vertices. We refer the reader to section 3.3 \cite{ASS12} for details.
 Similar to the situation in finite type $\mathbb{D}_{2\ell}$, the automorphisms
$\sigma$, $\rho_1$ and $\rho_2$ do not always lift to $\QAut_0(\A)$.
  However, we will see that $\tau$ can always be lifted.
Note that in this case the $\tau$ is still the composition $\tau_+\tau_-$, where $\tau_-$ (respectively $\tau_+$) is a sequence of mutations at the sinks (respectively sources).
Similar as the method we used previously, we consider any matrix
$B=\left(
 \begin{smallmatrix}
 \underline{B} \\
 \beta \\
 \end{smallmatrix}
 \right)$
and show that $\beta'$ is an integer linear combination of $\beta$ and the rows $\alpha_1, \cdots, \alpha_n$ of $\underline{B}$, where
$B'=\tau\left(
 \begin{smallmatrix}
 \underline{B} \\
 \beta' \\
 \end{smallmatrix}
 \right).$
By a direct calculation, we see this from the following equality
 \begin{equation*}
\label{eq:affine-type-D-1}
\begin{array}{l}
\beta' = -\beta-[-b_1+[-b_2]_++[-b_3]_++[-b_4]_+]_+\alpha_1-[-b_3]_+\alpha_3\\
~~~~~~~-\sum\limits_{k=2}^{(n-5)/2}[-b_{2k+1}+[-b_{2k}]_{+}+[-b_{2k+2}]_{+}]_{+}\alpha_{2k+1}
-\sum\limits_{k=1}^{(n-1)/2}[-b_{2k}]_{+}\alpha_{2k}\\
~~~~~~~-[-b_{n-2}+[-b_{n-3}]_++[-b_{n-1}]_++[-b_{n}]_+]_+\alpha_{n-2}
-[-b_n]_+\alpha_n.
\end{array}
\end{equation*}
for odd $n$ and the equality
 \begin{equation*}
\label{eq:affine-type-D-2}
\begin{array}{l}
\beta' = -\beta-[-b_1+[-b_2]_++[-b_3]_++[-b_4]_+]_+\alpha_1-[-b_3]_+\alpha_3\\
~~~~~~~-\sum\limits_{k=2}^{(n-4)/2}[-b_{2k+1}+[-b_{2k}]_{+}+[-b_{2k+2}]_{+}]_{+}\alpha_{2k+1}
-\sum\limits_{k=1}^{(n-2)/2}[-b_{2k}]_{+}\alpha_{2k}\\
~~~~~~~-[-b_{n-1}+[-b_{n-2}]_+]_+\alpha_{n-1}
-[-b_n+[-b_{n-2}]_+]_+\alpha_n.
\end{array}
\end{equation*}
for even $n$. Therefore $\QAut_0(\A)$ always contains a subgroup isomorphic to $\Z$ which is generated by $\tau$.

{\bf{Case III:}} Let $\A$ be a cluster algebra of type $\widetilde{\mathbb{E}}_{6}$. We begin with a initial seed of $\A$ with the exchange matrix $\underline{B}$, whose quiver is bipartite:

\[\xymatrix{
&&6\ar[d]&&\\
&&3&&\\
5\ar[r]&2&\ar[l]1\ar[r]\ar[u]&4&\ar[l]7 .}
\]
It was shown in \cite{ASS12} that the direct cluster automorphism group $\Aut^+(\A_{triv})$ is isomorphic to $\Z\times S_3$, where the cyclic part is generated by the Auslander-Reiten translation $\tau$ and the $S_3$ part comes from the automorphisms of the quiver. Again,  the cluster automorphisms in $S_3$ may not lift to $\QAut_0(\A)$, while $\tau$ can always be lifted. We still use the notations $\beta$, $\beta'$ and $\alpha_i, 1\leqslant i \leqslant 7$. The linear relation between them is
\begin{equation*}
\label{eq:type-tilde-E-6}
\begin{aligned}
\beta' =& -\beta-[-b_1+[-b_2]_++[-b_3]_++[-b_4]_+]_+\alpha_1-[-b_2]_+\alpha_2-[-b_3]_+\alpha_3\\
&-[-b_4]_+\alpha_4-[-b_5+[-b_2]_+]_+\alpha_5-[-b_6+[-b_3]_+]_+\alpha_6-[-b_7+[-b_4]_+]_+\alpha_7.
\end{aligned}
\end{equation*}
Therefore \[\QAut_0(\A) \cong \Z\times G\subseteq \Aut^+(\A_{triv}),\] where $G$ is a subgroup of $S_3$.

{\bf{Case IV:}} Let $\A$ be a cluster algebra of type $\widetilde{\mathbb{E}}_{7}$ with the initial principal part quiver
\[\xymatrix{
&&&3&&\\
6&\ar[l]5\ar[r]&2&\ar[l]1\ar[r]\ar[u]&4&\ar[l]7\ar[r]&8 .}
\]

 It was shown in \cite{ASS12} that the direct cluster automorphism group $\Aut^+(\A_{triv})$ is isomorphic to $\Z\times \Z_2$, where the infinite cyclic part is generated by the Auslander-Reiten translation $\tau$ and the $Z_2$ part comes from the automorphisms of the quiver.  Then \[\QAut_0(\A) \cong \Z\times G\subseteq \Aut^+(\A_{triv}),\] where $G$ is a subgroup of $\Z_2$. The linear relation is as follows:
\begin{equation*}
\label{eq:type-tilde-E-7}
\begin{aligned}
\beta' =& -\beta-[-b_1+[-b_2]_++[-b_3]_++[-b_4]_+]_+\alpha_1-[-b_2]_+\alpha_2-[-b_3]_+\alpha_3\\
&-[-b_4]_+\alpha_4-[-b_5+[-b_2]_++[-b_6]_+]_-\alpha_5-[-b_6]_+\alpha_6\\
&-[-b_7+[-b_4]_++[-b_8]_+]_+\alpha_7-[-b_8]_+\alpha_8.
\end{aligned}
\end{equation*}

{\bf{Case V:}} Let $\A$ be a cluster algebra of type $\widetilde{\mathbb{E}}_{8}$ with the initial principal part quiver
\[\xymatrix{
&&3&&\\
5\ar[r]&2&\ar[l]1\ar[r]\ar[u]&4&\ar[l]6\ar[r]&7&\ar[l]8\ar[r]&9.}
\]
The direct cluster automorphism group $\Aut^+(\A_{triv})=\langle\tau\rangle\cong \Z$. Then we have \[\QAut_0(\A) \cong \Z\cong \Aut^+(\A_{triv}).\] The linear relation is as follows:
\begin{equation*}
\label{eq:type-tilde-E-7a}
\begin{aligned}
\beta' =& -\beta-[-b_1+[-b_2]_++[-b_3]_++[-b_4]_+]_+\alpha_1-[-b_2]_+\alpha_2-[-b_3]_+\alpha_3\\
&-[-b_4]_+\alpha_4-[-b_5+[-b_2]_+]_+\alpha_5-[-b_6+[-b_4]_+-[-b_7]_+]_+\alpha_6\\
&-[-b_7]_+\alpha_7-[-b_8+[-b_7]_++[-b_9]_+]_+\alpha_8-[-b_9]_+\alpha_9.
\end{aligned}
\end{equation*}
To summarize, we have the following
\begin{theorem}\label{cor:affine-type}
Let $\A$ be a cluster algebra of affine type. Then the quasi-automorphism group $\QAut_0(\A)$ is listed in Table \ref{table}.
\end{theorem}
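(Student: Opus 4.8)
The statement collects the outcomes of the five cases analysed above, so the plan is to feed the case-by-case lattice computations into the general machinery of Section~\ref{sect 4} and read off the seven entries of Table~\ref{table}. First I would note that every affine type occurring here---$\widetilde{\mathbb{A}}_{p,q}$, $\widetilde{\mathbb{D}}_{n-1}$, $\widetilde{\mathbb{E}}_6$, $\widetilde{\mathbb{E}}_7$, $\widetilde{\mathbb{E}}_8$---is skew-symmetric, hence satisfies condition~$(\star)$. Proposition~\ref{prop:main-2} then supplies the injection
\[\QAut_0(\A)\monoto\Aut^+(\A_{triv}),\]
and since the target $\Aut^+(\A_{triv})$ has been computed in \cite{ASS12} for each affine type, the problem reduces to identifying the image of this injection inside the known group.

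To pin down the image I would use the criterion of Remark~\ref{rem:quasi-automorphism}: a direct cluster automorphism $f$ of $\A_{triv}$ lifts to a quasi-automorphism of $\A$ exactly when, for a lift $\S$ of a seed and a lift $\S'$ of its image, one has $\underline{B}=\underline{B'}$ (automatic, as $f$ fixes the principal part) and the lattice equality $\Lat(B)=\Lat(B')$. By Lemma~\ref{lem one frozen row} this lattice test may be carried out one frozen row at a time, so in each case it comes down to writing the mutated frozen row $\beta'$ as an explicit integer linear combination of $\beta$ and the rows $\alpha_i$ of $\underline{B}$, and symmetrically $\beta$ in terms of $\beta'$. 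This is precisely the content of the displayed relations in Cases~I--V: they show that the Auslander--Reiten translation $\tau=\tau_+\tau_-$ (and, in type $\widetilde{\mathbb{A}}$, the generators $r_1,r_2$ giving $H_{p,q}$) always lifts, so the $\Z$-part (respectively the $H_{p,q}$-part) of $\Aut^+(\A_{triv})$ always lies in the image.

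The remaining generators of $\Aut^+(\A_{triv})$ behave as in the finite-type phenomenon of Example~\ref{exm:counter-exm2}: the reflections $\sigma,\rho_1,\rho_n$ in type $\widetilde{\mathbb{D}}$, together with the extra quiver symmetries (the $S_3$ in $\widetilde{\mathbb{E}}_6$, the $\Z_2$ in $\widetilde{\mathbb{E}}_7$, and the swap $\sigma$ in $\widetilde{\mathbb{A}}_{p,p}$), need not preserve the lattice, and whether a given one lifts depends on the choice of frozen rows. Running the same one-row lattice test on these symmetries is what produces the subgroup factors recorded in Table~\ref{table}---$G_1\subseteq\Z_2$, $G_2\subseteq S_4$ (the enlarged symmetry special to $\widetilde{\mathbb{D}}_4$), $G_3\subseteq S_3$, and $H$ with $\Z\subseteq H\subseteq G$ for $\widetilde{\mathbb{D}}_{n-1}$, $n\geqslant 6$. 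Assembling the seven descriptions $H_{p,q}$, $H_{p,p}\rtimes G_1$, $\Z\times G_2$, $H$, $\Z\times G_3$, $\Z\times G_1$ and $\Z$ reproduces the affine column of Table~\ref{table}.

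The main obstacle is not the organizational bookkeeping but the explicit lattice identities underlying the lifting of $\tau$ in types $\widetilde{\mathbb{D}}$ and $\widetilde{\mathbb{E}}$, where the source--sink mutation sequence is long and the resulting expressions for $\beta'$ are intricate; once those relations are verified, matching each lifted subgroup to the corresponding table entry is routine.
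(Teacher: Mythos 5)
Your proposal matches the paper's argument essentially verbatim: the paper's proof of this theorem is literally ``This follows from the above discussion,'' and that discussion is exactly the structure you describe --- the injection $\QAut_0(\A)\monoto\Aut^+(\A_{triv})$ from Proposition \ref{prop:main-2} (valid since these affine types are skew-symmetric, hence satisfy $(\star)$), reduction to a single frozen row via Lemma \ref{lem one frozen row}, the explicit expression of the mutated frozen row $\beta'$ as an integer combination of $\beta$ and the rows of $\underline{B}$ to lift $\tau$ (and $r_1,r_2$ in type $\widetilde{\mathbb{A}}_{p,q}$), and the subgroup factors $G_1$, $G_2$, $G_3$, $H$ recording which residual quiver symmetries pass the lattice test. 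No gaps or deviations to report.
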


\begin{proof}
This follows from the above discussion.
\end{proof}

\section{Rank 2}\label{sect 7}
We end the paper with a discussion of rank two cluster algebras.
\begin{theorem}\label{thm:main-3}

Let $\A$ be a cluster algebra of rank two, then
\[\QAut_0(\A)\cong \Aut^+(\A_{triv})\cong\left\{\begin{array}{ll}
\Z_5 &\textup{in type $\mathbb{A}_2$;}\\
\Z_3 &\textup{in type $\mathbb{B}_2$;}\\
\Z_3 &\textup{in type $\mathbb{C}_2$;}\\
\Z_4 &\textup{in type $\mathbb{G}_2$;}\\
\Z &\textup{in all other rank 2 types.}
\end{array}\right.\]
\end{theorem}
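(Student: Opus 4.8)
The plan is to prove both displayed isomorphisms simultaneously by identifying $\Aut^+(\A_{triv})$ as a cyclic group generated by the Auslander--Reiten translation $\tau$ and showing that $\tau$ lifts to a quasi-automorphism of $\A$. First I would note that for a rank two cluster algebra the exchange matrix $\underline B=\left(\begin{smallmatrix}0&b\\-c&0\end{smallmatrix}\right)$ (with $b,c>0$) has determinant $bc>0$, so it is non-degenerate and condition $(\star)$ of Lemma \ref{lem:dependence-exchange-graph} holds. Proposition \ref{prop:main-2} then supplies an injective homomorphism $h_0\colon\QAut_0(\A)\monoto\Aut^+(\A_{triv})$, and it remains to understand the target group and the image of $h_0$.

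For the target group I would recall from \cite{ASS12} that in the finite types $\mathbb A_2,\mathbb B_2,\mathbb C_2,\mathbb G_2$ the sequence of cluster variables is periodic of period $5,6,6,8$; the valued quiver $1\to 2$ has no orientation-preserving symmetry (so there are no extra diagram automorphisms, in contrast to type $\mathbb D$), and hence $\Aut^+(\A_{triv})=\langle\tau\rangle$ is cyclic of order $5,3,3,4$ respectively, where $\tau$ advances the sequence by two. In every remaining rank two type $(bc\ge 4)$ the cluster variables form a doubly infinite sequence of pairwise distinct elements, the exchange graph is a line, and $\Aut^+(\A_{triv})\cong\Z$.

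To control the image of $h_0$ it suffices, by Remark \ref{rem:quasi-automorphism}, to decide when a seed $\S'=\omega(\S)$ with principal part $\underline B'=\underline B$ satisfies $\Lat(B')=\Lat(B)$. The relevant $\omega$ realizing $\tau$ (up to inverse) is $\mu_2\mu_1$, which shifts the cluster sequence by two \emph{without} relabeling. By Lemma \ref{lem one frozen row} I may reduce to a single frozen row $\beta=(b_1,b_2)$, and a short computation with the mutation rule \eqref{eq:matrix-mutation} along $\mu_2\mu_1$ gives
\[
\beta' \;=\; -\beta \;-\; [b_1]_+\,\alpha_1 \;-\; \bigl[\,b_2+b\,[b_1]_+\,\bigr]_+\,\alpha_2 ,
\]
where $\alpha_1=(0,b)$ and $\alpha_2=(-c,0)$ are the rows of $\underline B$. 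Since $\beta,\alpha_1,\alpha_2\in\Lat(B)$, this yields $\beta'\in\Lat(B)$, hence $\Lat(B')\subseteq\Lat(B)$, and the same computation for the inverse sequence gives equality. Thus $\tau$ always lifts to a quasi-automorphism, so $\langle\tau\rangle$ lies in the image of $h_0$. In the finite types $\tau$ \emph{generates} $\Aut^+(\A_{triv})$, so $h_0$ is surjective and $\QAut_0(\A)\cong\Aut^+(\A_{triv})$ is the listed cyclic group; in the infinite types $\QAut_0(\A)$ is a nontrivial subgroup of $\Aut^+(\A_{triv})\cong\Z$, hence itself $\cong\Z$.

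The main obstacle, beyond the bookkeeping of checking that the nonlinear $[\,\cdot\,]_+$ terms produced by the two mutations assemble into the honest lattice element above (uniformly in $b,c$, which in particular settles the type $\mathbb G_2$ case deferred in Case VI of Lemma \ref{lem:finite-type}), is a genuine subtlety in the case $b=c\ge 2$. There the shift by \emph{one} is also a \emph{direct} cluster automorphism (because $\underline B\cong-\underline B$ exactly when $b=c$), yet it is realized by a single mutation followed by a transposition of the two cluster variables, and that column swap does not preserve $\Lat(B)$ in general; so $h_0$ is \emph{not} surjective for these algebras. The point to get right is that this non-surjectivity is harmless: a nontrivial subgroup of $\Z$ is still isomorphic to $\Z$, so the abstract isomorphism $\QAut_0(\A)\cong\Aut^+(\A_{triv})$ of Table \ref{table} persists even though the natural inclusion is proper.
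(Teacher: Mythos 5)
Your proposal is correct, and its core is the same as the paper's proof: reduce to a single frozen row via Lemma \ref{lem one frozen row}, compute that the frozen row transforms under $\tau^{\pm1}$ by an integer combination of $\beta,\alpha_1,\alpha_2$ (your formula for $\mu_2\mu_1$ is literally the paper's $m=-1$ formula with $s=b$), conclude that $\tau$ lifts to a quasi-automorphism, and finish with the injection $h_0$ of Proposition \ref{prop:main-2} plus the group computations quoted from \cite{ASS12}. Where you genuinely differ is the case $b=c\ge 2$, and there you are \emph{more} careful than the paper. The paper's proof asserts that any other seed $\zS'$ equals $\tau^m(\zS)$, which amounts to assuming $\Aut^+(\A_{triv})=\langle\tau\rangle$; that is true for $b\ne c$ and in finite type (in type $\mathbb{A}_2$ the shift by one equals $\tau^{\pm3}$ because the group has odd order $5$), but it fails for skew-symmetric infinite rank two, where the shift by one --- a single mutation followed by transposing the two indices, legitimate as a direct automorphism precisely because $\underline{B}\cong-\underline{B}$ when $b=c$ --- generates $\Aut^+(\A_{triv})\cong\Z$ and $\tau$ is only its square. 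As you say, this generator need not lift: e.g.\ for the Kronecker matrix with frozen row $\beta=(1,0)$ the shifted row is $(2,-1)\notin\Lat(B)=\Z\times 2\Z$, so $h_0$ can fail to be surjective. Your observation that the image of $h_0$ is nonetheless a nontrivial subgroup of $\Z$, hence abstractly isomorphic to $\Z$, is exactly what is needed to close this gap in the paper's own argument, and it makes explicit that in these cases the isomorphism of the theorem is abstract rather than induced by $h_0$. Your parenthetical point that the computation is uniform in $b,c$ and therefore settles the $\mathbb{G}_2$ case deferred in Case VI of Lemma \ref{lem:finite-type} is also well taken, since it is what keeps the appeal to Corollary \ref{cor:finite-type-1} from being circular for that type.
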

\begin{proof}
Let $\zS=(\x,\p,B)$ be a seed. According to Lemma \ref{lem one frozen row}, we may assume without loss of generality that $B$ has exactly one frozen row. Then
\[B=\left(
 \begin{array}{cc}
0&s\\
 -t&0 \\
 b_1&b_2 \\
 \end{array}
 \right)
=\left(
 \begin{array}{c}
 \alpha_1 \\
 \alpha_2 \\
 \beta \\
 \end{array}
 \right)
\]
 where we may assume $s$ and $t$ are positive integers.  Let $\zS'=(\x',\p,B')$ be another seed and
 let  $\beta'=(b'_1,b'_2)$ denote the frozen row of $B'$. Then $\zS'= \tau^m(\zS)$ for some integer $m$. If $m=1$ then $\beta' =-\beta-[-b_1+t[-b_2]_+]_+\alpha_1-[-b_2]_+\alpha_2$. If $m=-1$ then $\beta' =-\beta-[b_1]_+\alpha_1-[b_2+s[b_1]_+]_+\alpha_2$. In both cases we have $\Lat(B)=\Lat(B')$, and by induction $\Lat(B)=\Lat(\tau^m(B))$ for any $m$. Therefore $\QAut_0(\A)\cong \Aut^+(\A_{triv})$.

If $\A$ is of finite type, that is types $\mathbb{A}_2$, $\mathbb{B}_2$,  $\mathbb{C}_2$, and $\mathbb{G}_2$, then the group has been computed in Corollary \ref{cor:finite-type-1}. Otherwise, the rank of $\tau$ is infinite, so $\QAut_0(\A)\cong \langle \tau\rangle\cong \Z$.
\end{proof}

\subsection*{Acknowledgments}

This joint work is done while the first author is visiting the University of Connecticut since December 2017. He gratefully acknowledges the support of the China Scholarship Council, and the University of Connecticut for providing him with an excellent working environment.

\end{document}